\documentclass[12pt]{amsart}

\usepackage{amsmath,amssymb,amsfonts,amsxtra,hyperref,fullpage,xypic,centernot,setspace}
\usepackage{soul}
\setuldepth{zarikian}

\nocite{*}

\theoremstyle{plain}
\newtheorem*{theorem*}{Theorem}
\newtheorem{theorem}{Theorem}[section]
\newtheorem{lemma}[theorem]{Lemma}

\newtheorem{corollary}[theorem]{Corollary}
\newtheorem{example}[theorem]{Example}
\theoremstyle{definition}
\newtheorem{definition}[theorem]{Definition}
\newtheorem{remark}[theorem]{Remark}
\newtheorem{question}[theorem]{Question}

\DeclareMathOperator{\bbC}{\mathbb{C}}
\DeclareMathOperator{\bbE}{\mathbb{E}}
\DeclareMathOperator{\bbF}{\mathbb{F}}
\DeclareMathOperator{\bbH}{\mathbb{H}}
\DeclareMathOperator{\bbK}{\mathbb{K}}
\DeclareMathOperator{\bbM}{\mathbb{M}}

\DeclareMathOperator{\bbT}{\mathbb{T}}
\DeclareMathOperator{\bbZ}{\mathbb{Z}}
\DeclareMathOperator{\A}{\mathcal{A}}
\DeclareMathOperator{\Ad}{\operatorname{Ad}}
\DeclareMathOperator{\Aut}{\operatorname{Aut}}
\DeclareMathOperator{\B}{\mathcal{B}}
\DeclareMathOperator{\C}{\mathcal{C}}

\DeclareMathOperator{\D}{\mathcal{D}}
\DeclareMathOperator{\E}{\mathcal{E}}

\DeclareMathOperator{\fix}{\operatorname{fix}}
\renewcommand{\H}{\mathcal{H}}

\DeclareMathOperator{\id}{\operatorname{id}}
\DeclareMathOperator{\Ind}{\operatorname{Ind}}
\DeclareMathOperator{\J}{\mathcal{J}}

\DeclareMathOperator{\M}{\mathcal{M}}

\let\slasho=\o
\renewcommand{\o}{\overline}
\DeclareMathOperator{\one}{\mathbf{1}}
\DeclareMathOperator{\orb}{\operatorname{orb}}

\DeclareMathOperator{\Proj}{\operatorname{Proj}}

\DeclareMathOperator{\ran}{\operatorname{ran}}
\renewcommand{\span}{\operatorname{span}}
\DeclareMathOperator{\U}{\mathcal{U}}
\DeclareMathOperator{\W}{\mathcal{W}}

\begin{document}

\nocite{*}

\title{Unique Pseudo-Expectations for Dynamical $C^*$-Inclusions}
\author{Vrej Zarikian}
\address{U. S. Naval Academy, Annapolis, MD 21402}
\email{zarikian@usna.edu}
\subjclass[2020]{Primary: 46L07, 46L55}
\keywords{$C^*$-inclusion, pseudo-expectation, injective envelope, finite-index, crossed product, fixed-point, relative commutant}
\date{\today}
\begin{abstract}
Let $(\A,G,\alpha)$ be a $C^*$-dynamical system. Unique extension properties for the $C^*$-inclusion $\A \subseteq \A \rtimes_{\alpha,r} G$ have been studied extensively. In this paper, we investigate unique extension properties for some other natural ``dynamical'' $C^*$-inclusions, namely:
\begin{itemize}
\item $C_r^*(G) \subseteq \A \rtimes_{\alpha,r} G$;
\item $\A^G \subseteq \A$, where $\A^G$ is the fixed-point subalgebra;
\item $Z(\A)^c \subseteq \A \rtimes_{\alpha,r} G$, where $Z(\A)^c = Z(\A)' \cap (\A \rtimes_{\alpha,r} G)$ is the relative commutant of the center.
\end{itemize}
Our hope is that a larger selection of examples will enable progress on some lingering open problems, in particular (1) the relationship between aperiodicity and the unique pseudo-expectation property and (2) the relationship between the faithful unique pseudo-expectation property and norming. 
\end{abstract}
\maketitle

\tableofcontents

\section{Introduction} \label{intro}

Let $(\A,G,\alpha)$ be a $C^*$-dynamical system. Unique extension properties for the $C^*$-inclusion $\A \subseteq \A \rtimes_{\alpha,r} G$ have been studied extensively, and are now completely understood. Indeed, we have the following summary result (see Section \ref{notation} below for any unfamiliar notation or terminology):

\begin{theorem}
Let $(\A,G,\alpha)$ be a $C^*$-dynamical system. Then the following statements hold:
\begin{enumerate}
\item[i.] $\A \subseteq \A \rtimes_{\alpha,r} G$ has the \textbf{pure extension property} (PEP) $\iff$ $\hat{\alpha}:G \curvearrowright \mathcal{\widehat{A}}$ is pointwise free. \cite[Corollary 2.5]{Zarikian2019b}
\item[ii.] $\A \subseteq \A \rtimes_{\alpha,r} G$ has the \textbf{almost extension property} (AEP) $\iff$ $\hat{\alpha}:G \curvearrowright \mathcal{\widehat{A}}$ is essentially free (i.e., pointwise free on a dense set). \cite[Corollary 2.6]{Zarikian2019b}
\item[iii.] $\A \subseteq \A \rtimes_{\alpha,r} G$ has a \textbf{unique pseudo-expectation} $\iff$ $\alpha:G \curvearrowright \A$ is pointwise properly outer $\iff$ $\A \subseteq \A \rtimes_{\alpha,r} G$ is aperiodic. \cite[Theorem 3.5]{Zarikian2019a} and \cite[Proposition 2.4]{KwasniewskiMeyer2022}
\item[iv.] $\A \subseteq \A \rtimes_{\alpha,r} G$ has a \textbf{unique conditional expectation} $\iff$ $\alpha:G \curvearrowright \A$ is pointwise freely acting $\iff$ $\A' \cap (\A \rtimes_{\alpha,r} G)=Z(\A)$. \cite[Theorem 3.2]{Zarikian2019a}
\end{enumerate}
\end{theorem}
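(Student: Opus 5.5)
This theorem is a compilation of known results, each with a citation, so the natural "proof" assembles the cited arguments. I describe how I would reprove each part, organized around the canonical faithful conditional expectation $E:\A \rtimes_{\alpha,r} G \to \A$, $E(a\lambda_g)=\delta_{g,e}a$. Everything rests on the Fourier coefficients $E(x\lambda_g^*)$ of $x \in \A\rtimes_{\alpha,r} G$. These coefficients determine $x$ because $E$ is faithful. Moreover, any linear map defined on $\A\rtimes_{\alpha,r} G$ that is $\A$-bimodular (or $\A$-multiplicative) is controlled by its values on the single elements $a\lambda_g$.

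For (iv), first take $x \in \A'\cap(\A\rtimes_{\alpha,r} G)$. Commutation with $a\in\A$ forces each coefficient $x_g=E(x\lambda_g^*)$ to satisfy $a x_g = x_g \alpha_g(a)$ for all $a$. So $x_g$ is a dependent element, and pointwise free action means exactly that no nonzero such $x_g$ exists for $g\ne e$. Hence the relative commutant is $Z(\A)$ if and only if the action is pointwise freely acting. Next, given any conditional expectation $\Phi$, bimodularity gives $a\,\Phi(\lambda_g) = \Phi(\lambda_g)\,\alpha_g(a)$. Thus $\Phi(\lambda_g)$ is again a dependent element, so it vanishes when $g\ne e$, and therefore $\Phi=E$. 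Conversely, a nonzero dependent element for some $g\neq e$ can be used to perturb $E$ into a second conditional expectation. This construction is the delicate direction.

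Part (iii) is the same argument carried out in the injective envelope $I(\A)$. A pseudo-expectation $\theta$ is an $\A$-bimodule map, so $\theta(\lambda_g)$ is an element $y\in I(\A)$ with $ay=y\alpha_g(a)$. By Hamana's theory, $\alpha_g$ extends uniquely to $I(\A)$, and such $y$ are governed by the properly outer part of that extension. So $y=0$ for $g\ne e$ precisely when $\alpha_g$ is properly outer. The equivalence with aperiodicity follows from the Kwa\'sniewski--Meyer criterion for Fourier coefficients: aperiodicity of $\A\subseteq\A\rtimes_{\alpha,r} G$ reduces, coefficient by coefficient, to aperiodicity of each $\alpha_g$ with $g \ne e$, which is Kishimoto-type proper outerness.

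For (i) and (ii), I would restrict to pure states. A pure state $\rho$ of $\A$ extends uniquely exactly when every extension $\phi$ satisfies $\phi(a\lambda_g)=0$ for $g\ne e$, since then $\phi = \rho\circ E$ is forced. I would estimate $\phi(a\lambda_g)$ using the GNS representation of $\rho$, and show it can be nonzero only when $[\pi_\rho]$ is fixed by $\hat\alpha_g$. Pointwise freeness rules this out at every point, giving the PEP. Freeness on a dense set gives unique extension for a weak$^*$-dense set of pure states, which is the AEP.

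The main obstacle is the converse directions: building a second pseudo-expectation, or a second extension of a pure state, from a non-properly-outer automorphism or from a fixed point. I would handle this by the cited constructions, using the inner part of $\alpha_g$ on an invariant ideal to define a nontrivial coefficient for $\lambda_g$.
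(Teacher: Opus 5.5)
Your outline is correct and takes essentially the paper's approach. The paper proves this summary theorem only by citing Zarikian's two papers and Kwa\'sniewski--Meyer, and your Fourier-coefficient/dependent-element reasoning for (iii)--(iv) and GNS reasoning for (i)--(ii) are the standard mechanisms behind those results. There is one harmless index slip: bimodularity gives $a\Phi(\lambda_g)=\Phi(\lambda_g)\alpha_g^{-1}(a)$, not $\Phi(\lambda_g)\alpha_g(a)$, and this does not matter because the conditions range over all $g$.

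As in the paper, several nontrivial ingredients are imported from those references rather than proved:
\begin{itemize}
\item upgrading the relation $a\,\theta(\lambda_g)=\theta(\lambda_g)\alpha_g^{-1}(a)$ from $a\in\A$ to all of $I(\A)$, so that Hamana's ``properly outer implies freely acting'' applies;
\item identifying Kishimoto aperiodicity of each $\alpha_g$ with proper outerness in the injective-envelope sense, which is a theorem and not a definition;
\item the converse constructions of a second conditional expectation, pseudo-expectation, or pure-state extension.
\end{itemize}
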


Of course the same $C^*$-dynamical system $(\A,G,\alpha)$ gives rise to other important $C^*$-inclusions, for example $C_r^*(G) \subseteq \A \rtimes_{\alpha,r} G$ and $\A^G \subseteq \A$, which can exhibit very different behavior than $\A \subseteq \A \rtimes_{\alpha,r} G$. These less-studied dynamical $C^*$-inclusions have received more attention lately \cite{AmrutamKalantar2020,EchterhoffRordam2024,Rordam2023,Zarikian2025}, and the purpose of this paper is to investigate their unique extension properties, in particular to determine when they have a unique pseudo-expectation. Intrinsic interest notwithstanding, our main motivation is to increase the available supply of $C^*$-inclusions with the unique pseudo-expectation property, with an eye toward making progress on some fundamental open problems, such as the relationship between the unique pseudo-expectation property and aperiodicity (see Section \ref{hierarchy} below), as well as the connection between the faithful unique pseudo-expectation property and norming \cite{PittsSmithZarikian2024}.\\

In the remainder of Section \ref{intro}, we establish notation and discuss some preliminaries. In Section \ref{general_results}, we prove some unique pseudo-expectation results for $C^*$-inclusions which are not necessarily dynamical in nature. In particular we consider finite-dimensional inclusions, finite-index inclusions (in the sense of Watatani), and corner inclusions. These more general results facilitate our analysis of dynamical $C^*$-inclusions in Section \ref{dynamical_results}, where the reader will find our main results. There we consider
\begin{itemize}
\item the inclusion $C_r^*(G) \subseteq \A \rtimes_{\alpha,r} G$, for which uniqueness of pseudo-expectations is related to unique ergodicity of $\alpha:G \curvearrowright \A$;
\item the inclusion $\A^G \subseteq \A$ for $|G|<\infty$, which is a corner of the inclusion $\A \rtimes_{\alpha,r} G \subseteq \A \otimes B(\ell^2(G))$, for which Watatani's theory of finite-index inclusions is an incisive tool; 
\item and the inclusion $Z(\A)^c \subseteq \A \rtimes_{\alpha,r} G$, where $Z(\A)^c$ (the relative commutant of the center) serves as a technically less onerous proxy for $C^*(\A \cup \A^c)$ (the $C^*$-algebra generated by $\A$ and its relative commutant $\A^c$).
\end{itemize}
Although we make good progress in each of the aforementioned directions, there is still substantial room for our results to be strengthened and/or generalized, and the open problems mentioned at the end of the previous paragraph remain unsolved.

\subsection{Notation and Terminology} \label{notation}

Most ambient $C^*$-algebras in this paper will be unital. If $\A$ is a $C^*$-algebra, then $Z(\A) = \A' \cap \A$ will denote the \emph{center}, $U(\A)$ the \emph{unitary group}, and $\Proj(\A)$ the \emph{orthogonal projections}. The \emph{spectrum} of $\A$ will be written $\widehat{\mathcal{A}}$. It consists of the non-zero irreducible representations of $\A$ modulo unitary equivalence, and can be equipped with a locally compact (but not necessarily Hausdorff) topology. Every $C^*$-algebra $\A$ is contained in a unique minimal injective operator system $I(\A)$, called the \emph{injective envelope}, which is actually a $C^*$-algebra containing $\A$ as a $C^*$-subalgebra \cite[Theorem 4.1]{Hamana1979}. The \emph{automorphism group} of a $C^*$-algebra $\A$ will be denoted $\Aut(\A)$. We say that $\alpha \in \Aut(\A)$ is \emph{inner} if there exists $u \in U(\A)$ such that $\alpha=\Ad(u)$, and \emph{outer} otherwise. Following \cite{ChodaKasaharaNakamoto1972}, we say that $d \in \A$ is a \emph{dependent element} for  $\alpha \in \Aut(\A)$ if $\alpha(a)d=da$ for all $a \in \A$. If $\alpha \in \Aut(\A)$ has no non-zero dependent elements, then we say that it is \emph{freely acting}. If $\alpha \in \Aut(\A)$, then the formula $\hat{\alpha}([\pi]) = [\pi \circ \alpha^{-1}]$ defines a homeomorphism of $\widehat{\mathcal{A}}$. We say that $\hat{\alpha}$ is \emph{free} if $\fix(\hat{\alpha})=\emptyset$ and \emph{topologically free} if $\fix(\hat{\alpha})^\circ=\emptyset$. Every $\alpha \in \Aut(\A)$ extends uniquely to $I(\alpha) \in \Aut(I(\A))$. We say that $\alpha$ is \emph{properly outer} if there does not exist $0 \neq p \in \Proj(Z(I(\A)))$ such that $I(\alpha)(p)=p$ and $I(\alpha)|_{I(\A)p}$ is inner.\\

For $G$ a discrete group, $\lambda:G \to U(B(\ell^2(G))$ will denote the \emph{left regular representation},
\[
    \lambda_g\delta_h = \delta_{gh}, ~ h \in G,
\]
and $\rho:G \to U(B(\ell^2(G)))$ will denote the \emph{right regular representation},
\[
    \rho_g\delta_h = \delta_{hg^{-1}}, ~ h \in G.
\]
The \emph{reduced group $C^*$-algebra} of $G$ is
\[
    C_r^*(G) = \overline{\span}\{\lambda_g: g \in G\} \subseteq B(\ell^2(G)),
\]
and the \emph{right and left group von Neumann algebras} of $G$ are
\[
    L(G) = \{\lambda_g: g \in G\}'' \subseteq B(\ell^2(G)) \text{ and } R(G) = \{\rho_g: g \in G\}'' \subseteq B(\ell^2(G)),
\]
respectively.\\

A \emph{$C^*$-dynamical system} $(\A,G,\alpha)$ will consist of a $C^*$-algebra $\A$, a discrete group $G$, and a homomorphism $\alpha:G \to \Aut(\A)$. As is customary, we will write $\alpha:G \curvearrowright \A$ instead of $\alpha:G \to \Aut(\A)$ and $\alpha_g$ instead of $\alpha(g)$. We denote the \emph{reduced crossed product} by $\A \rtimes_{\alpha,r} G$. It may identified with the $C^*$-subalgebra of $\A^{**} \overline{\otimes} B(\ell^2(G))$ (or just $\A \otimes B(\ell^2(G))$ if $|G|<\infty$) generated by the operators
\[
    \sum_g \left(\alpha_g^{-1}(a) \otimes E_{g,g}\right), ~ a \in \A,
\]
and
\[
    1 \otimes \lambda_g = 1 \otimes \sum_h E_{gh,h}, ~ g \in G,
\]
where $E_{g,h} \in B(\ell^2(G))$ is the rank-one operator such that $E_{g,h}\delta_h=\delta_g$. For most purposes it is safe to think of $\A \rtimes_{\alpha,r} G$ as a certain $C^*$-completion of $C_c(G,\A) := \span\{a_g\lambda_g: a_g \in \A, g \in G\}$, subject to the relations
\[
    (a_g\lambda_g)(a_h\lambda_h) = a_g\alpha_g(a_h)\lambda_{gh} \text{ and } (a_g\lambda_g)^* = \alpha_{g^{-1}}(a_g^*)\lambda_{g^{-1}}.
\]
We identify $\A$ with $\{a_e\lambda_e: a_e \in \A\} \subseteq \A \rtimes_{\alpha,r} G$ and note that there exists a faithful conditional expectation $\bbE_{\A}:\A \rtimes_{\alpha,r} G \to \A$ such that $\bbE_{\A}\left(\sum_g a_g\lambda_g\right)=a_e$ for all $\sum_g a_g\lambda_g \in C_c(G,\A)$. Lastly,
\[
    \A^G = \{a \in \A: (\forall g \in G) ~ \alpha_g(a)=a\}
\]
will denote the \emph{fixed-point algebra}.\\

A \emph{$C^*$-inclusion} is an inclusion $\A \subseteq \B$ of $C^*$-algebras such that $1_{\B}=1_{\A}$. If $\A \subseteq \B$ is a $C^*$-inclusion, then we write $\A^c = \A' \cap \B$ for the \emph{relative commutant}. The $C^*$-inclusion $\A \subseteq \B$ has
\begin{itemize}
\item the \emph{pure extension property} (PEP) if every pure state $\phi \in PS(\A)$ extends uniquely to a pure state $\tilde{\phi} \in PS(\B)$;
\item the \emph{almost extension property} (AEP) if the set $PS(\A \uparrow \B)$ of pure states on $\A$ which extend uniquely to pure states on $\B$ is weak* dense in $PS(\A)$ \cite[p. 265]{NagyReznikoff2014};
\item a \emph{unique pseudo-expectation} if there exists a unique ucp (unital completely positive) map $\Psi:\B \to I(\A)$ such that $\Psi(a)=a$ for all $a \in \A$ \cite[Definition 1.3]{Pitts2017};
\item a \emph{unique conditional expectation} if there exists a unique ucp map $E:\B \to \A$ such that $E(a)=a$ for all $a \in \A$.
\end{itemize}
Following \cite[Definition 2.3]{KwasniewskiMeyer2022}, we say that the $C^*$-inclusion $\A \subseteq \B$ is \emph{aperiodic} if
\[
    (\forall b \in \B)(\forall \D \in \bbH(\A))(\forall \varepsilon > 0)(\exists d \in \D_+^1)(\exists a \in \A)
    ~ \|dbd-a\| < \varepsilon.
\]
Here $\bbH(\A)$ denotes the non-zero hereditary subalgebras of $\A$ and $\D_+^1$ denotes the norm-one positive elements of $\D$.

\subsection{The Hierarchy of Unique Extension Properties} \label{hierarchy}

The following theorem summarizes the hierarchy of unique extension properties for general $C^*$-inclusions.

\begin{theorem}[\cite{KwasniewskiMeyer2022,PittsSmithZarikian2024}] \label{hierarchy}
For a $C^*$-inclusion $\A \subseteq \B$, consider the statements:
\begin{enumerate}
\item[i.] $\A \subseteq \B$ has the pure extension property (PEP);
\item[ii.] $\A \subseteq \B$ has the almost extension property (AEP);
\item[iii.] $\A \subseteq \B$ is aperiodic;
\item[iv.] $\A \subseteq \B$ has a unique pseudo-expectation;
\item[v.] $\A \subseteq \B$ has at most one conditional expectation.
\end{enumerate}
Then
\[
    (i) \implies (ii) \implies (iii) \implies (iv) \implies (v).
\]
And if $\B$ is separable, then
\[
    (ii) \iff (iii).
\]
\end{theorem}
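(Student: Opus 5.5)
The plan is to prove the chain link by link, using the standard characterizations from \cite{KwasniewskiMeyer2022} and \cite{PittsSmithZarikian2024}.

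\textbf{(i) $\implies$ (ii).} This is immediate: under the PEP we have $PS(\A \uparrow \B) = PS(\A)$, which is certainly weak* dense in $PS(\A)$.

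\textbf{(iv) $\implies$ (v).} This is also direct. If $E_1, E_2 : \B \to \A$ are conditional expectations, compose each with the inclusion $\A \subseteq I(\A)$. The results are ucp maps $\B \to I(\A)$ fixing $\A$, that is, pseudo-expectations. Uniqueness of the pseudo-expectation forces $E_1 = E_2$.

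\textbf{(ii) $\implies$ (iii).} Fix $b \in \B$, $\D \in \bbH(\A)$ and $\varepsilon > 0$.
\begin{itemize}
\item The pure states of $\A$ that are supported on $\D$ form a nonempty relatively weak* open subset of $PS(\A)$. By the AEP, there is $\phi \in PS(\A \uparrow \B)$ that does not vanish on $\D$.
\item Let $\tilde\phi$ be its unique extension and $(\pi, \H, \xi)$ the GNS triple of $\phi$. Uniqueness of the extension should mean $\tilde\phi(b) = \lim_\lambda \phi(d_\lambda b d_\lambda)$ along a suitable excision net for $\phi$.
\item By Akemann--Anderson--Pedersen excision, that net can be taken to be $d_\lambda \in \D_+^1$ with $\phi(d_\lambda) = 1$, so that $d_\lambda a d_\lambda - \phi(a) d_\lambda^2 \to 0$ in norm for $a \in \A$.
\item One then wants $\|d_\lambda b d_\lambda - \tilde\phi(b) d_\lambda^2\| \to 0$, which gives the required $a = \tilde\phi(b) d_\lambda^2 \in \A$.
\end{itemize}
This last norm-excision statement for $b \in \B$ is the characterization of unique extension from \cite{KwasniewskiMeyer2022}, and I would invoke it rather than reprove it.

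\textbf{(iii) $\implies$ (iv).} This is the step I expect to be the main obstacle. Let $\Psi : \B \to I(\A)$ be any pseudo-expectation and fix $b \in \B$. Aperiodicity gives, for each $\D \in \bbH(\A)$, some $d \in \D_+^1$ with $d b d \approx a \in \A$. Since $\Psi$ is an $\A$-bimodule map, $d\,\Psi(b)\,d \approx a$. Hence for any two pseudo-expectations $\Psi_1, \Psi_2$, the element $x = \Psi_1(b) - \Psi_2(b)$ satisfies $\|d x d\| < 2\varepsilon$ for some such $d$, for every $\D$ and every $\varepsilon$. The key step is then a lemma about the injective envelope: if $x \in I(\A)$ and every nonzero hereditary subalgebra of $\A$ contains norm-one positive elements $d$ with $\|dxd\|$ arbitrarily small, then $x = 0$. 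I would try to prove it as follows.
\begin{itemize}
\item Reduce to $x$ self-adjoint and suppose, for contradiction, that $x \neq 0$.
\item Use the essential nature of $\A \subseteq I(\A)$, namely Hamana's result that every nonzero hereditary subalgebra of $I(\A)$ meets $\A$ nontrivially in the appropriate sense, to find $\D \in \bbH(\A)$ on which $x$ is bounded below. That is, find $\D$ whose elements are dominated by a spectral projection $p \in I(\A)$ of $x$ with $p x p \geq \delta p$.
\item For $d \in \D_+^1$ this would give $\|dxd\| \geq \delta \|d p d\| = \delta$, a contradiction.
\end{itemize}
This is essentially the argument of \cite{KwasniewskiMeyer2022} via their ``$\A$-regular'' or Hamana-type characterization, and I would cite it.

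\textbf{(ii) $\iff$ (iii) for separable $\B$.} Only the converse (iii) $\implies$ (ii) needs proof. In the separable case, aperiodicity yields a dense $G_\delta$ of pure states with unique extension. The argument runs as follows.
\begin{itemize}
\item Take a countable dense set $\{b_n\}$ in $\B$.
\item For each $n$ and each $k$, the set of pure states $\phi$ whose extensions all agree on $b_n$ up to $1/k$ is open in $PS(\A)$.
\item Aperiodicity shows each of these sets is dense.
\item Since $PS(\A)$ is a Baire space when $\A$ is separable, the intersection is dense, and it is contained in $PS(\A \uparrow \B)$.
\end{itemize}
This follows \cite[Theorem 5.5]{KwasniewskiMeyer2022}.
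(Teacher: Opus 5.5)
\textbf{Comparison with the paper.} Your chain and the references you rely on match the paper's proof, which is entirely by citation. The paper treats $(i)\Rightarrow(ii)$ and $(iv)\Rightarrow(v)$ as trivial, exactly as you do. It cites \cite[Theorem 5.5]{KwasniewskiMeyer2022} for $(ii)\Rightarrow(iii)$ and the separable converse, and \cite[Theorem 3.6]{KwasniewskiMeyer2022} for $(iii)\Rightarrow(iv)$. It also mentions \cite[Theorem A.1.2]{PittsSmithZarikian2024} for $(ii)\Rightarrow(iv)$, which the chain does not need. Your Baire-category outline of the separable converse is sound. So as a proof by reference your proposal is fine. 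The self-contained arguments you sketch, however, would not stand on their own.

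\textbf{The step in $(iii)\Rightarrow(iv)$ that fails.} Your reduction is correct: every $y\in Y=\{\Psi_1(b)-\Psi_2(b):b\in\B\}$ has the property that each $\D\in\bbH(\A)$ contains $d\in\D_+^1$ with $\|dyd\|$ arbitrarily small. But your way of killing such $y$ is not available. You want $\D\in\bbH(\A)$ lying under a spectral projection $p$ of $y$, i.e.\ $\D\subseteq pI(\A)p$. A nonzero hereditary subalgebra of $I(\A)$ need not contain any nonzero element of $\A$.

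For example, let $\A=M_2(C[0,1])$, so $I(\A)=M_2(C(G))$ with $k:G\to[0,1]$ the Gleason cover. Let $W_n\subseteq G$ be the clopen set corresponding to $(s_n,t_n)$, with all endpoints distinct and $\{s_n\}$ dense. Put $\theta=\sum_n 2^{-n}\chi_{W_n}\in C(G)$ and let $p$ be the projection onto $(\cos\theta,\sin\theta)$. Then $\theta$, hence $p$, takes two distinct values on each fibre $k^{-1}(s_m)$. On the other hand, a nonzero $F\in\A$ with $F=pFp$ would force $p$ to be constant on the fibres over the open set $\{F\neq0\}$. So $\A\cap pI(\A)p=\{0\}$, and for $y=p$ your step cannot be carried out.

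\textbf{A correct route via rigidity.} The lemma is still true, but it needs rigidity rather than order domination.
\begin{enumerate}
\item[1.] The $\Psi_i$ are $\A$-bimodule maps, so $Y$ is a self-adjoint $\A$-bimodule.
\item[2.] Claim: if $c\in\A$ is self-adjoint, $y\in Y$ and $c+y\geq0$, then $c\geq0$. Otherwise put $\delta=\|c_-\|/2$ and let $\D$ be the hereditary subalgebra generated by $(-c-\delta)_+$. Every $d\in\D_+^1$ satisfies $dcd\leq-\delta d^2$, hence $dyd\geq\delta d^2$ and $\|dyd\|\geq\delta$, contradicting the defining property of $Y$. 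In particular $\A\cap Y=\{0\}$.
\item[3.] The same holds at every matrix level. Every pure state of $M_n(\A)$ has the form $\phi(V^*\,\cdot\,V)$ with $\phi$ a state of $\A$ and $V$ a column over $\A$ (Kadison transitivity), and $V^*M_n(Y)V\subseteq Y$.
\item[4.] Hence $a+y\mapsto a$ is a well-defined ucp map $\A+Y\to\A$. Extend it to a ucp map $\Phi:I(\A)\to I(\A)$. Rigidity forces $\Phi=\id$, while $\Phi|_Y=0$. So $Y=\{0\}$ and $\Psi_1=\Psi_2$.
\end{enumerate}

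\textbf{A smaller slip in $(ii)\Rightarrow(iii)$.} A pure state that merely does not vanish on $\D$ need not satisfy $\|\phi|_{\D}\|=1$. Take the vector state of $(e_1+e_2)/\sqrt2$ on $\bbM_2$ with $\D=\bbC e_{11}$. So in general there is no excising net in $\D_+^1$ with $\phi(d_\lambda)=1$.

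You must first replace $\phi$ by $\phi\circ\Ad(u)$ for some $u\in U(\A)$ (Kadison transitivity). This state still extends uniquely, because $\Ad(u)$ is an automorphism of $\B$ leaving $\A$ invariant. You must then still show that $b\in\B$ can be excised by elements of $\D$. That is exactly the content you are borrowing from \cite[Theorem 5.5]{KwasniewskiMeyer2022}. Also, $\phi(d_\lambda bd_\lambda)$ should read $\tilde\phi(d_\lambda bd_\lambda)$.
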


\begin{proof}
The implications $(i) \implies (ii)$ and $(iv) \implies (v)$ are trivial. The implication $(ii) \implies (iv)$, which pre-dated the notion of aperiodicity, is \cite[Theorem A.1.2]{PittsSmithZarikian2024}. The implication $(ii) \implies (iii)$ (and the equivalence $(ii) \iff (iii)$ when $\B$ is separable) is \cite[Theorem 5.5]{KwasniewskiMeyer2022}. Finally, the implication $(iii) \implies (iv)$ is \cite[Theorem 3.6]{KwasniewskiMeyer2022}.
\end{proof}

As mentioned in the introduction, this paper was partially motivated by the following open problem.

\begin{question}[{\cite[Remark 7.5]{KwasniewskiMeyer2022}}] \label{open}
Is aperiodicity equivalent to the unique pseudo-expectation property? 
\end{question}

\begin{remark} \label{comments}
\phantom{}
\begin{itemize}
\item If Question \ref{open} had an affirmative answer, then one would have a characterization of the unique pseudo-expectation property in terms of the structure of the inclusion, without ever having to mention the injective envelope. 
\item Given that aperiodicity is equivalent to the AEP when the ambient $C^*$-algebra is separable, one might think that the correct question is whether or not aperiodicity is equivalent to the AEP. But there are aperiodic abelian inclusions which fail the AEP \cite[Theorem 6.2 and Example 5.1]{Zarikian2022}.
\end{itemize}
\end{remark}

\subsection{Injective Envelopes of Crossed Products} \label{inj}

In order to analyze pseudo-expectations for inclusions of the form $C_r^*(G) \subseteq \A \rtimes_{\alpha,r} G$, we need some insight into $I(C_r^*(G))$, the injective envelope of $C_r^*(G)$. Since $C_r^*(G) = \bbC \rtimes_r G$, it suffices to understand $I(\A \rtimes_{\alpha,r} G)$, the injective envelope of $\A \rtimes_{\alpha,r} G$. Although an exact description is not available, the following bounds often carry the day:
\[
    I_G(\A) \rtimes_{I_G(\alpha),r} G \subseteq I(\A \rtimes_{\alpha,r} G) \subseteq I_G(\A) \overline{\rtimes}_{I_G(\alpha)} G.
\]
Here $I_G(\A)$ is the \emph{$G$-equivariant injective envelope} of $\A$ equipped with its canonical action $I_G(\alpha):G \curvearrowright I_G(\A)$ and $\overline{\rtimes}$ denotes the \emph{monotone complete crossed product}. Specializing to the case $\A=\bbC$, we get
\[
    C(\partial_FG) \rtimes_{\sigma,r} G \subseteq I(C_r^*(G)) \subseteq C(\partial_FG) \overline{\rtimes}_\sigma G,
\]
where $\partial_FG$ is the \emph{Furstenberg boundary} of $G$ and $\sigma:G \curvearrowright C(\partial_FG)$ is the canonical action. In the remainder of this section we provide enough background for the reader to make sense of this paragraph.\\

For any $C^*$-dynamical system $(\A,G,\alpha)$, Hamana proves the existence and uniqueness of the $G$-injective envelope $I_G(\A)$ \cite[Theorem 2.5]{Hamana1985}. This means that $I_G(\A)$ is a minimal injective containing $\A$ in the category whose objects are operator systems equipped with a $G$-action and whose morphisms are $G$-equivariant ucp maps. We denote the $G$-action on $I_G(\A)$ by $I_G(\alpha):G \curvearrowright I_G(\A)$ and note that it extends $\alpha:G \curvearrowright \A$. A priori $I_G(\A)$ is just an operator system, but it becomes a $C^*$-algebra with respect to the Choi-Effros product. Since $I_G(\A)$ is injective in the traditional (non-equivariant) sense, there are operator system inclusions $\A \subseteq I(\A) \subseteq I_G(\A)$ \cite[Remark 2.3]{Hamana1985}. With some care, one can arrange that these inclusions are actually $G$-equivariant $*$-homomorphisms, where $I(\A)$ is equipped with the unique $G$-action $I(\alpha):G \curvearrowright I(\A)$ extending $\alpha:G \curvearrowright \A$ \cite[Section 3]{Hamana1985}. Somewhat surprisingly, $\bbC$ is not always $G$-injective, so that $I_G(\bbC) \neq \bbC$ in general. In fact, $I_G(\bbC)=\bbC$ if and only if $G$ is amenable \cite[Remark 3.8]{Hamana1985}. Instead, $I_G(\bbC)=C(\partial_FG)$, where $\partial_FG$ is the Furstenberg boundary of $G$ \cite[Theorem 3.11]{KalantarKennedy2017}. Since $C(\partial_FG)$ is injective, $\partial_FG$ is an extremally disconnected compact Hausdorff space. If $\sigma:G \curvearrowright C(\partial_FG)$ denotes the canonical action, then there is a corresponding continuous action $\hat{\sigma}:G \curvearrowright \partial_FG$, which is always minimal \cite[Proposition 3.4]{KalantarKennedy2017}.\\

In \cite[Section 3]{Hamana1982II}, Hamana introduces the monotone complete crossed product $M(\A,G)$ corresponding to a $C^*$-dynamical system $(\A,G,\alpha)$, where $\A$ is a monotone complete $C^*$-algebra. Since injective $C^*$-algebras are monotonically complete, this construction applies to $(I(\A),G,I(\alpha))$ or $(I_G(\A),G,I_G(\alpha))$, for an arbitrary $C^*$-dynamical system $(\A,G,\alpha)$. Of course von Neumann algebras are monotonically complete as well. If $\A$ is a von Neumann algebra, then $M(\A,G) = \A \overline{\rtimes}_\alpha G$, the von Neumann-algebraic crossed product. Because of this, no ambiguity arises if we write $\A \overline{\rtimes}_\alpha G$ for $M(\A,G)$ in general, which we prefer for reasons of notational intuitiveness. As one would expect, $\A \overline{\rtimes}_\alpha G$ is a monotone complete $C^*$-algebra admitting a covariant representation $(\pi,u)$ of $(\A,G,\alpha)$ such that
\[
    \left\|\sum_g \pi(a_g)u_g\right\|_{\A \overline{\rtimes}_\alpha G}=\left\|\sum_g a_g\lambda_g\right\|_{\A \rtimes_{\alpha,r} G}
\]
for all $a \in C_c(G,\A)$. In other words, we are free to imagine $\A \rtimes_{\alpha,r} G \subseteq \A \overline{\rtimes}_\alpha G$. The canonical conditional expectation $\bbE_{\A}:\A \rtimes_{\alpha,r} G \to \A$ extends to a faithful conditional expectation $\overline{\bbE}_{\A}:\A \overline{\rtimes}_{\alpha} G \to \A$, which is $G$-equivariant in the sense that $\overline{\bbE}_{\A}(\lambda_gx\lambda_g^*)=\alpha_g(\overline{\bbE}_{\A}(x))$ for all $x \in \A \overline{\rtimes}_\alpha G$ and $g \in G$. Every $x \in \A \overline{\rtimes}_\alpha G$ has a formal ``Fourier series'' representation $x \sim \sum_g x_g\lambda_g$, where $x_g := \overline{\bbE}_{\A}(x\lambda_g^*)$ for all $g \in G$. As usual, $x=0$ if and only if $x_g=0$ for all $g \in G$, although one cannot appeal to norm density of $C_c(G,\A)$ for the proof. Formulas such as $ax \sim \sum_g ax_g\lambda_g$, $a \in \A$, and $x\lambda_h \sim \sum_g x_g\lambda_{gh}$, $h \in G$, are easily verified by comparing ``Fourier coefficients''. Finally, $\A \overline{\rtimes}_\alpha G$ is injective if and only if $\A$ is $G$-injective \cite[Lemma 3.2]{Hamana1985}. Thus $I_G(\A) \overline{\rtimes}_{I_G(\alpha)} G$ is an injective $C^*$-algebra containing $I_G(\A) \rtimes_{I_G(\alpha),r} G$ and therefore $\A \rtimes_{\alpha,r} G$. It follows that $I(\A \rtimes_{\alpha,r} G) \subseteq I_G(\A) \overline{\rtimes}_{I_G(\alpha)} G$ as \ul{operator systems}, although the $C^*$-product on $I(\A \rtimes_{\alpha,r} G)$ is not necessarily the restriction of the product on $I_G(\A) \overline{\rtimes}_{I_G(\A)} G$. On the other hand, Hamana proves that $I_G(\A) \rtimes_{I_G(\alpha),r} G \subseteq I(\A \rtimes_{\alpha,r} G)$ and that the restrictions of the products on $I(\A \rtimes_{\alpha,r} G)$ and $I_G(\A) \overline{\rtimes}_{I_G(\alpha)} G$ to $I_G(\A) \rtimes_{I_G(\alpha),r} G$ do coincide \cite[Theorem 3.4]{Hamana1985}.

\subsection{Decompositions of Equivariant Completely Bounded Maps}

As noted in the previous section, even though our interests lie in the non-equivariant setting, equivariant considerations show up naturally. In this section we prove equivariant versions of some classical decomposition results for completely bounded maps (Lemmas \ref{equiv_cb_decomp} and \ref{equiv_cp_decomp} below). These results are not surprising and probably known to experts in the field, but we could not find the exact statements we needed in the literature.

\begin{lemma}[{\cite[Remark 2.3]{Hamana1985}}] \label{canon_embed}
Let $(\A,G,\alpha)$ be a $C^*$-dynamical system. Then $\iota_{\A}:\A \to \ell^\infty(G,\A)$ defined by
\[
    \iota_{\A}(a) = (\alpha_h^{-1}(a))_{h \in G}
\]
is a $G$-equivariant unital $*$-monomorphism, where $\ell^\infty(G,\A)$ is equipped with the translation action $\tau:G \curvearrowright \ell^\infty(G,\A)$ defined by
\[
    \tau_g((a_h)_{h \in G}) = (a_{g^{-1}h})_{h \in G}.
\]
\end{lemma}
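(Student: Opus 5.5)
We verify directly the three properties claimed for $\iota_{\A}$: it is a unital $*$-homomorphism, it is injective, and it intertwines $\alpha$ with the translation action $\tau$. Throughout, $\ell^\infty(G,\A)$ denotes the $C^*$-algebra of bounded functions $G \to \A$ with pointwise operations and supremum norm. Well-definedness is immediate: each $\alpha_h^{-1}$ is an automorphism, hence isometric, so $\sup_h \|\alpha_h^{-1}(a)\| = \|a\| < \infty$.

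First, $\iota_{\A}$ is a unital $*$-homomorphism because each coordinate map $a \mapsto \alpha_h^{-1}(a)$ is a unital $*$-homomorphism, and the operations in $\ell^\infty(G,\A)$ are computed coordinatewise. Second, $\iota_{\A}$ is injective, and in fact isometric, since
\[
    \|\iota_{\A}(a)\| = \sup_h \|\alpha_h^{-1}(a)\| = \|a\|.
\]
Alternatively, one can simply read off the $h=e$ coordinate, which is $a$ itself.

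Third, for equivariance, fix $g \in G$ and $a \in \A$. The $h$-th coordinate of $\tau_g(\iota_{\A}(a))$ is the $g^{-1}h$-th coordinate of $\iota_{\A}(a)$, namely
\[
    \alpha_{g^{-1}h}^{-1}(a) = \alpha_{h^{-1}g}(a) = \alpha_h^{-1}(\alpha_g(a)).
\]
This is exactly the $h$-th coordinate of $\iota_{\A}(\alpha_g(a))$. Hence $\tau_g \circ \iota_{\A} = \iota_{\A} \circ \alpha_g$ for all $g \in G$.

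It is also worth checking that $\tau$ is genuinely a left action, so that the equivariance statement is meaningful. The $h$-th coordinate of $\tau_g\tau_k(x)$ is $x_{k^{-1}g^{-1}h} = x_{(gk)^{-1}h}$, which is the $h$-th coordinate of $\tau_{gk}(x)$. Also each $\tau_g$ is a $*$-automorphism of $\ell^\infty(G,\A)$, since it merely permutes coordinates.

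No step presents a real obstacle. The only point requiring care is the index bookkeeping in the equivariance computation, specifically using $(g^{-1}h)^{-1} = h^{-1}g$ together with the homomorphism property of $\alpha$.
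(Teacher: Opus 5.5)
Your proof is correct, including the key index computation $\alpha_{g^{-1}h}^{-1}=\alpha_{h^{-1}g}=\alpha_h^{-1}\circ\alpha_g$ that gives $\tau_g\circ\iota_{\A}=\iota_{\A}\circ\alpha_g$. The paper gives no proof of its own and simply cites Hamana's Remark 2.3, where the lemma is likewise a routine coordinatewise verification like yours.
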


\begin{lemma}
Let $\A$ be $G$-injective. Then $M_n(\A)$ is $G$-injective.
\end{lemma}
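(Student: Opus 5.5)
We prove the lemma directly from the definition of $G$-injectivity, using that matrix amplification commutes with everything in sight. Equip $M_n(\A)$ with the entrywise action $\alpha^{(n)}_g([a_{ij}]) = [\alpha_g(a_{ij})]$. We must show the following. Let $\mathcal{S} \subseteq \mathcal{T}$ be $G$-operator systems, and let $\phi:\mathcal{S} \to M_n(\A)$ be a $G$-equivariant ucp map. Then $\phi$ extends to a $G$-equivariant ucp map $\tilde\phi:\mathcal{T} \to M_n(\A)$.

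The first step is to reduce to the case of completely contractive maps into $\A$. Write $\phi = [\phi_{ij}]$ with $\phi_{ij} = \pi_{ij}\circ\phi$, where $\pi_{ij}:M_n(\A) \to \A$ is the $(i,j)$ entry. Each $\phi_{ij}$ is $G$-equivariant and completely bounded, but not ucp, so $G$-injectivity cannot be applied to it directly. Instead we use the standard trick of passing to $M_n(\mathcal{S}) \subseteq M_n(\mathcal{T})$ with the entrywise $G$-action. Define $\Phi: M_n(\mathcal{S}) \to \A$ by
\[
    \Phi([s_{ij}]) = \tfrac{1}{n}\sum_{i,j}\phi_{ij}(s_{ij}).
\]
This is $\tfrac1n\, e^*\phi^{(n)}([s_{ij}])\, e$ after compressing $M_n(M_n(\A))$ appropriately, where $e$ is a suitable column of units. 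Hence $\Phi$ is completely positive and unital on the diagonal unit $1_n$, and it is clearly $G$-equivariant. Moreover, $M_n(\mathcal{S}) \subseteq M_n(\mathcal{T})$ is an inclusion of $G$-operator systems.

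The second step applies $G$-injectivity of $\A$ to obtain a $G$-equivariant ucp extension $\tilde\Phi: M_n(\mathcal{T}) \to \A$. We then recover $\tilde\phi_{ij}(t) := n\,\tilde\Phi(E_{ij}\otimes t)$ and set $\tilde\phi = [\tilde\phi_{ij}]$. Equivariance is immediate because $\alpha^{(n)}$ acts entrywise and fixes the scalar matrix units. The extension property $\tilde\phi|_{\mathcal{S}}=\phi$ holds because $\tilde\Phi$ extends $\Phi$. The fact that $\tilde\phi$ is ucp is the standard Paulsen correspondence: a map $\psi:\mathcal{T} \to M_n(\A)$ is cp iff the associated functional-type map $[t_{ij}] \mapsto \sum_{ij}\psi_{ij}(t_{ij})$ on $M_n(\mathcal{T})$ is cp. The proof is the same as in the non-equivariant case, where $M_n(\A)$ is injective whenever $\A$ is.

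The main obstacle is this last correspondence in the operator-valued setting. When $\A$ is represented on $H$, we verify positivity of $\tilde\phi^{(m)}$ by testing against vectors $\xi \in (H^n)^m$. We rewrite $\langle \tilde\phi^{(m)}(T)\xi,\xi\rangle$ as $\langle \tilde\Phi^{(nm)}(\tilde T)\eta,\eta\rangle$ for a positive matrix $\tilde T$ built from $T$ and matrix units, with $\eta$ obtained by rearranging $\xi$. If this bookkeeping becomes messy, the fallback is to avoid $\Phi$ altogether. In that approach, we embed $M_n(\A)$ equivariantly into $M_n(\ell^\infty(G,I(\A)))\cong \ell^\infty(G, M_n(I(\A)))$ via Lemma \ref{canon_embed}. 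Then $M_n(I(\A))$ is injective, so Hamana's argument gives a $G$-equivariant ucp extension into $\ell^\infty(G,M_n(I(\A)))$. Finally, we compress back using the matrix amplification of a $G$-equivariant ucp retraction $\ell^\infty(G,I(\A)) \to \A$, which exists because $\A$ is $G$-injective and $\iota_{\A}$ is an equivariant embedding.
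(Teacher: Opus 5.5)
Your proposal is correct, and your primary argument takes a genuinely different route from the paper's.

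The paper never touches the extension-property definition. It cites Hamana's characterization: $\A$ is $G$-injective iff $\A$ is injective and there is a $G$-equivariant ucp map $P:\ell^\infty(G,\A)\to\A$ with $P\circ\iota_{\A}=\id_{\A}$. It then amplifies $P$ to $P^{(n)}$ and identifies $M_n(\ell^\infty(G,\A))$ with $\ell^\infty(G,M_n(\A))$. It also uses the non-equivariant fact that $M_n$ of an injective algebra is injective.

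Your fallback is essentially that argument with Hamana's remark unpacked. The only extra ingredient is that $\ell^\infty(G,\mathcal{I})$ is $G$-injective for injective $\mathcal{I}$, via $\tilde\phi(t)=(\tilde\phi_e(\sigma_h^{-1}(t)))_{h}$.

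Your main route works directly from the definition. It transports the Paulsen correspondence between maps $\mathcal{T}\to M_n(\A)$ and maps $M_n(\mathcal{T})\to\A$ to the equivariant setting, using that the scalar matrix units are $G$-fixed. What this buys is self-containment: you need neither Hamana's characterization, nor the embedding $\iota_{\A}$, nor the separate fact that $M_n(\A)$ is injective. What it costs is some matrix bookkeeping, whereas the paper's proof is two lines once the remark is cited.

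The step you flag as the main obstacle needs no vector computations. You have $\tilde\phi(t)=n\,\tilde\Phi^{(n)}\bigl([E_{ij}\otimes t]_{i,j}\bigr)$, and the map $t\mapsto[E_{ij}\otimes t]_{i,j}$ is completely positive. This is because $[E_{ij}]_{i,j}=\omega\omega^*$ with $\omega=\sum_i e_i\otimes e_i$. After a canonical shuffle, $\bigl[[E_{ij}\otimes t_{kl}]_{i,j}\bigr]_{k,l}$ becomes $\omega\omega^*\otimes[t_{kl}]$, which is positive whenever $[t_{kl}]\geq 0$. Unitality of $\tilde\phi$ follows because $E_{ij}\otimes 1\in M_n(\mathcal{S})$, so $\tilde\phi(1)=\phi(1)=I_n$.
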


\begin{proof}
By \cite[Remark 2.3]{Hamana1985}, $\A$ is $G$-injective if and only if $\A$ is injective and there exists a $G$-equivariant ucp map $P:\ell^\infty(G,\A) \to \A$ such that $P \circ \iota_{\A}=\id_{\A}$. But then $M_n(\A)$ is injective and $P^{(n)}:M_n(\ell^\infty(G,\A)) \to M_n(\A)$ is a $G$-equivariant ucp map such that $P^{(n)} \circ \iota_{\A}^{(n)} \to \id_{\A}^{(n)}$. Making the identification $M_n(\ell^\infty(G,\A))=\ell^\infty(G,M_n(\A))$ yields the result.
\end{proof}

\begin{lemma}[{equivariant version of \cite[Lemma 5.4.3]{EffrosRuanBook}}] \label{equiv_cb_decomp}
Let $(\A,G,\alpha)$, $(\B,G,\beta)$ be $C^*$-dynamical systems and $\phi:\A \to \B$ be a $G$-equivariant cb (completely bounded) map. If $\B$ is $G$-injective, then there exist $G$-equivariant cp (completely positive) maps $\phi_i:\A \to \B$, $1 \leq i \leq 4$, such that
\[
    \phi=(\phi_1-\phi_2)+i(\phi_3-\phi_4).
\]
\end{lemma}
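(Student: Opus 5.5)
The plan is to imitate the classical proof of \cite[Lemma 5.4.3]{EffrosRuanBook}, which goes through the off-diagonal $2\times 2$ trick and Wittstock--Paulsen, but to carry out every extension step in the $G$-equivariant category. The previous lemma, that $M_2(\B)$ is $G$-injective, makes this possible.

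First I will reduce to the case $\A$ unital, or more conveniently to an operator system. If $\A$ is non-unital, $\phi$ extends to the unitization in the usual way: we may replace $\A$ by $\A+\bbC 1$ inside $\A^{**}$ with the extended action, and $\phi$ extends equivariantly with the same cb norm. After scaling, we may assume $\|\phi\|_{cb}\le 1$. Consider the Paulsen operator system
\[
    \mathcal{S}=\left\{\begin{pmatrix}\lambda 1 & a\\ b^* & \mu 1\end{pmatrix}: a,b\in\A,\ \lambda,\mu\in\bbC\right\}\subseteq M_2(\A),
\]
equipped with the action $\alpha^{(2)}$, which leaves $\mathcal{S}$ invariant. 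Define
\[
    \Phi\begin{pmatrix}\lambda & a\\ b^* & \mu\end{pmatrix}=\begin{pmatrix}\lambda & \phi(a)\\ \phi(b)^* & \mu\end{pmatrix}.
\]
By the standard argument (\cite[Lemma 8.1]{Paulsen} style), $\Phi$ is ucp because $\|\phi\|_{cb}\le 1$. It is also clearly $G$-equivariant, since $\phi$ is equivariant and the diagonal scalars are fixed.

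Since $M_2(\B)$ is $G$-injective by the preceding lemma, $\Phi$ extends to a $G$-equivariant ucp map $\tilde\Phi: M_2(\A)\to M_2(\B)$. Write the compressions as
\[
    \tilde\Phi\begin{pmatrix}x&0\\0&0\end{pmatrix}=\begin{pmatrix}\psi_1(x)&*\\ *&*\end{pmatrix},\qquad
    \tilde\Phi\begin{pmatrix}0&0\\0&x\end{pmatrix}=\begin{pmatrix}*&*\\ *&\psi_2(x)\end{pmatrix}.
\]
Here $\psi_1,\psi_2:\A\to\B$ are cp and equivariant, because the corner embeddings and compressions commute with $\alpha^{(2)}$ and $\beta^{(2)}$. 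The map
\[
    x\mapsto \begin{pmatrix}\psi_1(x)&\phi(x)\\ \phi(x^*)^*&\psi_2(x)\end{pmatrix}=\tilde\Phi\begin{pmatrix}x&x\\x&x\end{pmatrix}
\]
is then cp and equivariant. Justifying that the off-diagonal entry is $\phi$ uses the bimodule property of $\tilde\Phi$ over the scalar diagonal matrices $e_{11},e_{22}$, which lie in the multiplicative domain.

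From this, the polarization-type formulas give the decomposition. For instance, define for $k=0,1,2,3$
\[
    \theta_k(x)=\tfrac14\begin{pmatrix}1& i^{k}\end{pmatrix}\tilde\Phi\begin{pmatrix}x&x\\x&x\end{pmatrix}\begin{pmatrix}1\\ \bar i^{k}\end{pmatrix}
    = \tfrac14\left(\psi_1(x)+\psi_2(x)+ i^{k}\phi(x)+\bar i^{k}\phi(x^*)^*\right).
\]
Each $\theta_k$ is cp, being a compression by a fixed scalar matrix, and equivariant. Since $\phi$ is not assumed self-adjoint, I would first split $\phi$ into its self-adjoint parts $\phi_{re}=(\phi+\phi^*)/2$ and $\phi_{im}=(\phi-\phi^*)/2i$, where $\phi^*(x)=\phi(x^*)^*$. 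Both parts are equivariant cb maps, and it suffices to treat a self-adjoint equivariant $\phi$. In that case $\theta_0-\theta_2=\phi$, exhibiting $\phi$ as a difference of equivariant cp maps, and combining the two parts yields the four maps $\phi_1,\dots,\phi_4$.

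The main obstacle is the equivariant extension step. We must check that $\mathcal{S}$ is a $G$-invariant operator system on which $\Phi$ is genuinely ucp, and that $G$-injectivity of $M_2(\B)$, meaning extension of equivariant ucp maps from $G$-operator subsystems, applies. This requires the ambient $M_2(\A)$, or $M_2$ of the unitization, to carry a compatible action. Everything else is bookkeeping of equivariance under compressions by scalar matrices.
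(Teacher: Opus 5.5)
Your proposal is correct and is essentially the paper's proof. The shared steps are the Paulsen system, the equivariant ucp extension $\tilde\Phi$ from $G$-injectivity of $M_2(\B)$, the bimodule property over the diagonal scalar projections, and scalar-vector compressions of $\tilde\Phi\circ P$, where $P(x)$ is the $2\times 2$ matrix all of whose entries are $x$. The only difference is bookkeeping: you first split $\phi$ into self-adjoint parts and use $\theta_0-\theta_2$ for each (two extensions), while the paper reads off $\phi=(\phi_1-\phi_2)+i(\phi_3-\phi_4)$ directly from all four compressions of one $\tilde\Phi$.

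There are two harmless slips. First, the compression actually equals $\tfrac14(\psi_1(x)+\psi_2(x)+\bar{i}^{k}\phi(x)+i^{k}\phi(x^*)^*)$; your coefficients are swapped, which is irrelevant for $k=0,2$. Second, the unitization detour is unneeded in the paper's unital setting and is incorrect as stated, since $\phi^{**}(1)$ need not lie in $\B$. For non-unital $\A$ you would instead use the unit of the unitization only on the diagonal of the Paulsen system.
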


\begin{proof}
We may assume that $\phi$ is completely contractive. Let
\[
    \mathcal{S}_{\A} 
    = \left\{\begin{bmatrix} \lambda 1_{\A} & a_1\\ a_2 & \mu 1_{\A} \end{bmatrix}: a_1, a_2 \in \A, ~ \lambda, \mu \in \bbC\right\} \subseteq M_2(\A)
\] 
be the Paulsen operator system and $\Phi:\mathcal{S}_{\A} \to M_2(\B)$ be the ucp map defined by
\[
    \Phi\left(\begin{bmatrix} \lambda1_{\A} & a_1\\ a_2^* & \mu1_{\A} \end{bmatrix}\right)
    = \begin{bmatrix} \lambda 1_{\B} & \phi(a_1)\\ \phi(a_2)^* & \mu 1_{\B} \end{bmatrix}
\]
\cite[Lemma 8.1]{PaulsenBook}. We note that $\Phi$ is $G$-equivariant, because
\begin{eqnarray*}
    \Phi\left(\alpha_g^{(2)}\left(\begin{bmatrix} \lambda1_{\A} & a_1\\ a_2^* & \mu1_{\A} \end{bmatrix}\right)\right)
    &=& \Phi\left(\begin{bmatrix} \alpha_g(\lambda1_{\A}) & \alpha_g(a_1)\\ \alpha_g(a_2^*) & \alpha_g(\mu1_{\A}) \end{bmatrix}\right)\\
    &=& \Phi\left(\begin{bmatrix} \lambda1_{\A} & \alpha_g(a_1)\\ \alpha_g(a_2)^* & \mu1_{\A} \end{bmatrix}\right)\\
    &=& \begin{bmatrix} \lambda1_{\B} & \phi(\alpha_g(a_1))\\ \phi(\alpha_g(a_2))^* & \mu1_{\B} \end{bmatrix}\\
    &=& \begin{bmatrix} \lambda1_{\B} & \beta_g(\phi(a_1))\\ \beta_g(\phi(a_2))^* & \mu1_{\B} \end{bmatrix}\\
    &=& \begin{bmatrix} \beta_g(\lambda1_{\B}) & \beta_g(\phi(a_1))\\ \beta_g(\phi(a_2)^*) & \beta_g(\mu1_{\B}) \end{bmatrix}\\
    &=& \beta_g^{(2)}\left(\begin{bmatrix} \lambda1_{\B} & \phi(a_1)\\ \phi(a_2)^* & \mu1_{\B} \end{bmatrix}\right)\\
    &=& \beta_g^{(2)}\left(\Phi\left(\begin{bmatrix} \lambda1_{\A} & a_1\\ a_2^* & \mu1_{\A} \end{bmatrix}\right)\right).
\end{eqnarray*}
Since $M_2(\B)$ is $G$-injective by the previous lemma, there exists a $G$-equivariant ucp map $\tilde{\Phi}:M_2(\A) \to M_2(\B)$ extending $\Phi$. Since $\tilde{\Phi}$ fixes $\bbC \oplus \bbC$, it is a ($\bbC \oplus \bbC$)-bimodule map \cite[Corollary 3.19]{PaulsenBook}. It follows that
\[
    \tilde{\Phi}\left(\begin{bmatrix} a_{11} & a_{12}\\ a_{21} & a_{22} \end{bmatrix}\right) = \begin{bmatrix} \psi_1(a_{11}) & \phi(a_{12})\\ \phi^*(a_{21}) & \psi_2(a_{22}) \end{bmatrix},
\]
where $\psi_1, \psi_2:\A \to \B$ are $G$-equivariant ucp maps. Now let $P:\A \to M_2(\A)$ be the $G$-equivariant cp map defined by
\[
    P(a) = \begin{bmatrix} a & a\\ a & a \end{bmatrix}. 
\]
Then
\[
    \phi = (\phi_1-\phi_2)+i(\phi_3-\phi_4),
\]
where
\[
    \phi_1 = \frac{1}{4}\begin{bmatrix} 1 & 1 \end{bmatrix}(\tilde{\Phi} \circ P)\begin{bmatrix} 1\\ 1 \end{bmatrix},
\]
\[
    \phi_2 = \frac{1}{4}\begin{bmatrix} 1 & -1 \end{bmatrix}(\tilde{\Phi} \circ P)\begin{bmatrix} 1\\ -1 \end{bmatrix},
\]
\[
    \phi_3 = \frac{1}{4}\begin{bmatrix} 1 & i \end{bmatrix}(\tilde{\Phi} \circ P)\begin{bmatrix} 1\\ -i \end{bmatrix},
\]
and
\[
    \phi_4 = \frac{1}{4}\begin{bmatrix} 1 & -i \end{bmatrix}(\tilde{\Phi} \circ P)\begin{bmatrix} 1\\ i \end{bmatrix}.
\]
\end{proof}

\begin{lemma}[{equivariant version of \cite[Lemma 5.1.6]{EffrosRuanBook}}] \label{equiv_cp_decomp}
Let $(\A,G,\alpha)$, $(\B,G,\beta)$ be $C^*$-dynamical systems and $\phi:\A \to \B$ be a $G$-equivariant cp map. If $\B$ is $G$-injective, then there exists a $G$-equivariant ucp map $\tilde{\phi}:\A \to \B$ such that $\phi(a)=\phi(1)^{1/2}\tilde{\phi}(a)\phi(1)^{1/2}$, $a \in \A$.
\end{lemma}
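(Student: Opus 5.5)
The plan is to follow the classical argument of \cite[Lemma 5.1.6]{EffrosRuanBook}, checking at each step that equivariance survives, in the same spirit as the proof of Lemma \ref{equiv_cb_decomp}. Write $h=\phi(1)\in\B_+$. Since $\phi$ is $G$-equivariant and $\alpha_g(1)=1$, we have $\beta_g(h)=h$ for all $g\in G$. Hence $h^{1/2}$ is $G$-fixed, and so is any element of $C^*(h,1)$ obtained by Borel or continuous functional calculus. Because $\B$ is $G$-injective, it is injective and hence monotone complete. Therefore the support (range) projection $p$ of $h$ lies in $\B$; it is the supremum of $f_n(h)$ for suitable continuous $f_n$, and it is $G$-fixed since each $\beta_g$ is an order automorphism.

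The key technical step is to produce a ucp map $\psi_0$ on $\A$ with values in the corner $p\B p$ such that $\phi(a)=h^{1/2}\psi_0(a)h^{1/2}$. First I would use the Radon--Nikodym/Stinespring idea: for each $\varepsilon>0$ put $h_\varepsilon=(h+\varepsilon)^{-1/2}$, which is $G$-fixed, and set $\psi_\varepsilon(a)=h_\varepsilon\phi(a)h_\varepsilon$. Each $\psi_\varepsilon$ is $G$-equivariant, cp and contractive, with $\psi_\varepsilon(1)=h(h+\varepsilon)^{-1}\le p$. Then I would take a limit in the monotone complete setting. The simplest route is to take a point-weak* cluster point in $B(H)$, after representing $\B\subseteq B(H)$, and then compress back into $\B$ by a $G$-equivariant conditional expectation.

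The main obstacle is exactly this limit. The cleanest fix is to avoid it: for the non-equivariant statement, take any cp map $\psi_0:\A\to\B$ with $h^{1/2}\psi_0 h^{1/2}=\phi$ and $\psi_0(1)=p$, which exists by \cite[Lemma 5.1.6]{EffrosRuanBook} since $\B$ is injective. Then equivariantize using $G$-injectivity. By \cite[Remark 2.3]{Hamana1985} there is a $G$-equivariant ucp $P:\ell^\infty(G,\B)\to\B$ with $P\circ\iota_{\B}=\id_{\B}$, and $P$ is an $\iota_\B(\B)$-bimodule map by Choi's multiplicative domain argument \cite[Corollary 3.19]{PaulsenBook}. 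Define
\[
\Psi(a)=P\bigl((\beta_s^{-1}\psi_0\alpha_s(a))_{s\in G}\bigr).
\]
A direct check with the translation action $\tau$ shows that $\Psi$ is $G$-equivariant and cp. For each $s$ we have $\beta_s^{-1}(h^{1/2}\psi_0(\alpha_s(a))h^{1/2})=h^{1/2}\beta_s^{-1}\psi_0\alpha_s(a)h^{1/2}$, because $h$ is fixed and $\phi$ is equivariant, and this equals $\beta_s^{-1}\phi\alpha_s(a)=\phi(a)$. Hence, using $\iota_\B(h^{1/2})=(h^{1/2})_s$ together with the bimodule property, $h^{1/2}\Psi(a)h^{1/2}=P(\iota_\B(\phi(a)))=\phi(a)$. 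Similarly $\Psi(1)=P(\iota_\B(p))=p$.

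Finally I unitalize. Fix any $G$-equivariant state-like ucp map $\omega:\A\to\B$; for instance, restrict $P$ composed with $\iota_{\B}$ to a suitable source, or more simply take $\omega=\Psi_1$, any $G$-equivariant ucp map $\A\to\B$. Such a map exists by $G$-injectivity, extending the unital map $\bbC1\to\B$. Set
\[
\tilde\phi(a)=\Psi(a)+(1-p)\omega(a)(1-p).
\]
This map is cp, unital because $\Psi(1)+(1-p)=1$, and $G$-equivariant since $p$ is fixed. Since $h^{1/2}(1-p)=0$, it satisfies $h^{1/2}\tilde\phi(a)h^{1/2}=\phi(a)$.
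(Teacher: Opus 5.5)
Your architecture is the paper's: a non-equivariant decomposition from \cite[Lemma 5.1.6]{EffrosRuanBook}, then equivariantization through a $G$-equivariant ucp left inverse $P$ of the canonical embedding, using its bimodule property and $\beta_g(h)=h$. However, the equivariantizing formula is wrong for the conventions you invoke, namely the translation action $\tau$ and $\iota_{\B}(b)=(\beta_s^{-1}(b))_s$.

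Put $F(a)=(\beta_s^{-1}\psi_0\alpha_s(a))_s$. Then $\iota_{\B}(h^{1/2})F(a)\iota_{\B}(h^{1/2})$ is the \emph{constant} tuple $(\phi(a))_s$, not $\iota_{\B}(\phi(a))=(\beta_s^{-1}(\phi(a)))_s$. Also $\tau_g(F(a))\neq F(\alpha_g(a))$ in general.

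Concretely, let $G=\bbZ_2$ act by the flip on $\A=\B=\bbC^2$ and take $\phi=\id$. Then $h=p=1$, $\psi_0=\id$ is forced, and $F(a)=(a,a)$. The map $P(x_e,x_g)=\frac{1}{2}(x_e+\beta_g(x_g))$ is a legitimate $\tau$-equivariant ucp map with $P\circ\iota_{\B}=\id_{\B}$. With it, your recipe returns $\tilde\phi=M_G\neq\id=\phi$.

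The repair is to use $\psi_0^{(\infty)}(\iota_{\A}(a))=(\psi_0(\alpha_s^{-1}(a)))_s$ instead. This intertwines $\alpha$ and $\tau$, and compressing by $\iota_{\B}(h^{1/2})$ gives $(\phi(\alpha_s^{-1}(a)))_s=(\beta_s^{-1}(\phi(a)))_s=\iota_{\B}(\phi(a))$. Equivalently, your formula is correct verbatim if $\ell^\infty(G,\B)$ carries the action $\gamma_g((x_s)_s)=(\beta_g(x_{sg}))_s$ and $\B$ sits inside as constants. The automorphism $(x_s)_s\mapsto(\beta_s^{-1}(x_{s^{-1}}))_s$ conjugates that pair to $(\tau,\iota_{\B})$ and carries your tuple to the corrected one.

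With that fix, your proof is the paper's plus unnecessary scaffolding. The paper takes the ucp map $\psi:\A\to B(\H)$ supplied by Effros--Ruan and applies a $G$-equivariant ucp $P:\ell^\infty(G,B(\H))\to\B$ with $P\circ\iota_{\B}=\id_{\B}$ directly. So $\tilde\phi=P\circ\psi^{(\infty)}\circ\iota_{\A}$ is unital from the outset, and no support projection, monotone completeness, or final unitalization by $\omega$ is needed.

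Even within your route, $E\circ\psi$, with $E:B(\H)\to\B$ a conditional expectation, is already a ucp map into $\B$. It satisfies $h^{1/2}E(\psi(a))h^{1/2}=\phi(a)$ by the bimodule property of $E$. This step, followed by compression by $p$, is exactly what your appeal to Effros--Ruan ``since $\B$ is injective'' tacitly needs in order to produce $\psi_0$ with values in $\B$ and $\psi_0(1)=p$. Your $p$/$\omega$ unitalization is correct but redundant, and the abandoned $h_\varepsilon$-limit paragraph can simply be deleted.
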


\begin{proof}
Suppose $\B \subseteq B(\H)$. By \cite[Lemma 5.1.6]{EffrosRuanBook}, there exists a ucp map $\psi:\A \to B(\H)$ such that
\[
    \phi(a)=\phi(1)^{1/2}\psi(a)\phi(1)^{1/2}, ~ a \in \A.
\]
Then $\psi^{(\infty)}:\ell^\infty(G,\A) \to \ell^\infty(G,B(\H))$ is a $G$-equivariant ucp map. Let $\iota_{\A}:\A \to \ell^\infty(G,\A)$ and $\iota_{\B}:\B \to \ell^\infty(G,B(\H))$ be the canonical $G$-equivariant unital $*$-monomorphisms (see Lemma \ref{canon_embed}). Since $\B$ is $G$-injective, there exists a $G$-equivariant ucp map $P:\ell^\infty(G,B(\H)) \to \B$ such that $P \circ \iota_{\B} = \id_{\B}$. Since $P|_{\iota_{\B}(\B)}$ is a $*$-homomorphism, Choi's Lemma implies
\[
    P(\iota_{\B}(b_1)x\iota_{\B}(b_2)) = b_1P(x)b_2, ~ x \in \ell^\infty(G,B(\H)), ~ b_1, b_2 \in \B.
\]
Now define a $G$-equivariant ucp map
\[
    \tilde{\phi} = P \circ \psi^{(\infty)} \circ \iota_{\A}:\A \to \B.
\]
Using the fact that
\[
    \beta_g(\phi(1)) = \phi(\alpha_g(1)) = \phi(1),
\]
we see that for all $a \in \A$,
\begin{eqnarray*}
    \phi(1)^{1/2}\tilde{\phi}(a)\phi(1)^{1/2}
    &=& \phi(1)^{1/2}P(\psi^{(\infty)}(\iota_{\A}(a)))\phi(1)^{1/2}\\
    &=& P(\iota_{\B}(\phi(1)^{1/2})\psi^{(\infty)}(\iota_{\A}(a))\iota_{\B}(\phi(1)^{1/2}))\\
    &=& P((\beta_g^{-1}(\phi(1)^{1/2}))_{g \in G}(\psi(\alpha_g^{-1}(a)))_{g \in G}(\beta_g^{-1}(\phi(1)^{1/2}))_{g \in G})\\
    &=& P((\phi(1)^{1/2})_{g \in G}(\psi(\alpha_g^{-1}(a)))_{g \in G}(\phi(1)^{1/2})_{g \in G})\\
    &=& P((\phi(1)^{1/2}\psi(\alpha_g^{-1}(a))\phi(1)^{1/2})_{g \in G})\\
    &=& P((\phi(\alpha_g^{-1}(a)))_{g \in G})\\
    &=& P((\beta_g^{-1}(\phi(a)))_{g \in G})\\
    &=& P(\iota_{\B}(\phi(a)))\\
    &=& \phi(a).
\end{eqnarray*}
\end{proof}

\section{Some General Results} \label{general_results}

\subsection{Finite-Dimensional Inclusions}

In this section we determine when an inclusion of finite-dimensional $C^*$-algebras has the various unique extension properties. This ought to have been done early in the development of the theory, but was somehow overlooked. Of course finite-dimensional $C^*$-algebras are von Neumann algebras, so \cite[Theorem 5.3]{PittsZarikian2015} gives a partial answer.

\begin{lemma} \label{fin_dim_summands}
Let $\A$ be a unital $C^*$-algebra and $p \in \Proj(\A)$. If $\dim(p\A p) < \infty$ and $\dim(p^\perp\A p^\perp) < \infty$, then $\dim(\A) < \infty$.
\end{lemma}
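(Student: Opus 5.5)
We use the Peirce decomposition $\A = p\A p + p\A p^\perp + p^\perp \A p + p^\perp \A p^\perp$, valid for any $a$ since $a = (p+p^\perp)a(p+p^\perp)$. The diagonal corners are finite-dimensional by hypothesis. It therefore suffices to show that the off-diagonal corner $p\A p^\perp$ is finite-dimensional; then $p^\perp\A p = (p\A p^\perp)^*$ is as well, and $\dim(\A)<\infty$ follows.

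To control $X := p\A p^\perp$, we view it as a right Hilbert module over the finite-dimensional $C^*$-algebra $\C := p^\perp \A p^\perp$, with inner product $\langle x,y\rangle = x^*y \in \C$. It is also a left module over $\D := p\A p$, and $xx^* \in \D$ for $x\in X$.

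The key step is to produce a faithful $*$-homomorphism into a finite-dimensional algebra. Consider the linking algebra, which here is all of $\A$. Since $\C$ is finite-dimensional, it is a finite direct sum of matrix algebras; choose a faithful state (trace) $\tau$ on $\C$. Define a pre-inner product on $X$ by $(x,y) = \tau(x^*y)$. This is positive definite, since $\tau(x^*x)=0$ forces $x^*x=0$ and hence $x=0$, and its norm is equivalent to the $C^*$-norm on $X$.

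Left multiplication gives a $*$-representation $\pi$ of $\D$ on $X$, which is injective whenever $X\neq 0$ on the relevant summands. This alone does not bound $\dim X$, so we use a more direct argument instead.

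Choose matrix units in $\C$ and write $p^\perp = \sum_j q_j$, a finite sum of minimal projections in $\C$; these are also minimal in $\A$ restricted to the corner. Then $X = \sum_j p\A q_j$, so it suffices to show each $p\A q$ is finite-dimensional when $q$ is a minimal projection of $\C$.

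For $x,y \in p\A q$ we have $x^*y \in q\A q = q\C q = \mathbb{C}q$. Hence $\langle x,y\rangle q := x^*y$ defines a genuine scalar inner product, and $\|x\|^2 = \|x^*x\|$ coincides with the Hilbert norm. So $p\A q$ is a Hilbert space (it is norm closed).

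If $x_1,\dots,x_n$ are orthonormal in $p\A q$, then the $x_ix_i^*$ are mutually orthogonal projections in $p\A p$, because $x_ix_i^*x_jx_j^* = \delta_{ij}x_ix_i^*$. They are nonzero, so they are linearly independent, giving $n \le \dim(p\A p) < \infty$. Thus $\dim p\A q<\infty$, and the result follows.

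The only real subtlety is this last orthonormality-to-orthogonal-projections argument. It is elementary once minimal projections in the finite-dimensional corner are used.
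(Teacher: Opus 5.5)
Your proof is correct, but it takes a different route from the paper's.

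\textbf{The paper's route.} The paper regards $X = p\A p^\perp$ as an equivalence bimodule between $\span(XX^*) \subseteq p\A p$ and $\D = \span(X^*X) \subseteq p^\perp\A p^\perp$, and writes $1_{\D} = \sum_{j=1}^n y_j^*z_j$. It then embeds $C_n(X)$ linearly into $\bbK_{\D}(C_n(X)) \cong M_n(\span(XX^*))$ via $\vec{x} \mapsto \theta_{\vec{x},\vec{y}}$. This map is injective because $\theta_{\vec{x},\vec{y}}(\vec{z}) = \vec{x}\cdot 1_{\D} = \vec{x}$. The argument relies on the Blecher--Le Merdy identification of the compact module operators on $C_n(X)$.

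\textbf{Your route.} You decompose $p^\perp$ into minimal projections $q$ of the finite-dimensional algebra $p^\perp\A p^\perp$. Since $q \le p^\perp$, you get $q\A q = q(p^\perp\A p^\perp)q = \bbC q$, so $x^*y = \langle x,y\rangle q$ is a genuine scalar inner product on $p\A q$. You then bound the size of any orthonormal family $x_1,\dots,x_n$ by noting that the $x_ix_i^*$ are mutually orthogonal nonzero projections in $p\A p$. Gram--Schmidt on finite-dimensional subspaces then bounds $\dim(p\A q)$; completeness of $p\A q$ is not actually needed.

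\textbf{Comparison.} Your argument is completely elementary and yields an explicit estimate: $\dim(p\A p^\perp) \le k\cdot N$. Here $k$ is the number of minimal projections summing to $p^\perp$, and $N \le \dim(p\A p)$ is the maximal number of mutually orthogonal nonzero projections in $p\A p$. The paper's argument is shorter and more conceptual once the Hilbert module machinery is available, but it depends on nontrivial cited results.

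\textbf{Editorial points.} The paragraphs introducing a faithful trace $\tau$ and the left-multiplication representation of $p\A p$ on $X$ play no role in the final argument (you abandon them yourself), so you should delete them. The phrase ``minimal in $\A$ restricted to the corner'' should be replaced by the exact fact you use, $q\A q = \bbC q$.
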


\begin{proof}
Define $X = p\A p^\perp$, $\C = \span(XX^*) \subseteq p\A p$, and $\D = \span(X^*X) \subseteq p^\perp\A p^\perp$ (no closures necessary because of finite dimensionality). Then $X$ is a equivalence $\C$-$\D$-bimodule \cite[Section 8.1.2]{BlecherLeMerdyBook}. Let $y_1, y_2, ..., y_n, z_1, z_2, ..., z_n \in X$ be such that $1_{\D}=\sum_{j=1}^n y_j^*z_j$. Note that $x1_{\D}=x$ for all $x \in X$, since
\[
    \|x1_{\D}-x\|^2 = \|(x1_{\D}-x)^*(x1_{\D}-x)\|
    = \|1_{\D}x^*x1_{\D}-1_{\D}x^*x-x^*x1_{\D}+x^*x\| = 0.
\]
By \cite[Proposition 8.1.11 and Lemma 8.1.15]{BlecherLeMerdyBook},
\[
    \bbK_{\D}(C_n(X)) \cong M_n(\bbK_{\D}(X)) \cong M_n(\C).
\]
We claim that the map $C_n(X) \to \bbK_{\D}(C_n(X)): \vec{x} \mapsto \theta_{\vec{x},\vec{y}}$ is a linear injection, where $\theta_{\vec{x},\vec{y}}(\vec{v}) = \vec{x} \cdot \langle \vec{y}, \vec{v} \rangle_{\D}$ for all $\vec{v} \in C_n(X)$. Indeed,
\[
    \theta_{\vec{x},\vec{y}}=0 \implies \theta_{\vec{x},\vec{y}}(\vec{z})=\vec{0} \implies \vec{x} \cdot \langle\vec{y},\vec{z}\rangle_{\D}=\vec{0} \implies \vec{x} \cdot 1_{\D}=\vec{0} \implies \vec{x}=\vec{0}.
\]
Thus
\[
    \dim(X) \leq \dim(C_n(X)) \leq \dim(\bbK_{\D}(C_n(X)))
    = \dim(M_n(\C)) < \infty.
\]
Therefore
\begin{eqnarray*}
    \dim(\A) 
    &=& \dim(p\A p)+\dim(p\A p^\perp)+\dim(p^\perp \A p)+\dim(p^\perp \A p^\perp)\\
    &=& \dim(p\A p)+\dim(X)+\dim(X^*)+\dim(p^\perp \A p^\perp) < \infty.
\end{eqnarray*}
\end{proof}

\begin{lemma}[{\cite[Lemma 6.6.3]{KR2}}] \label{rel_commutant_matrix_alg}
Let $\A \subseteq \B$ be a $C^*$-inclusion. If $\A \cong \bbM_n$, then $\B \cong \bbM_n \otimes (\A' \cap \B)$.
\end{lemma}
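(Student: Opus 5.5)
The plan is to realize $\B$ as a tensor product using a system of matrix units. Since $\A \cong \bbM_n$ and $1_{\A}=1_{\B}$, choose matrix units $\{e_{ij}\}_{i,j=1}^n \subseteq \A$ with $\sum_i e_{ii}=1_{\B}$. Set $\C=\A'\cap\B$. I will show that the map
\[
    \Theta:\bbM_n \otimes \C \to \B, \qquad \Theta(E_{ij}\otimes c)=e_{ij}c,
\]
is a $*$-isomorphism. Here $E_{ij}$ are the standard matrix units of $\bbM_n$.

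\emph{$\Theta$ is a $*$-homomorphism.} Because $\C$ commutes with $\A$, we have
\[
    (e_{ij}c)(e_{kl}d)=e_{ij}e_{kl}cd=\delta_{jk}e_{il}cd
\]
and $(e_{ij}c)^*=c^*e_{ji}=e_{ji}c^*$. These are exactly the relations of $E_{ij}\otimes c$, so $\Theta$ is a $*$-homomorphism on the algebraic tensor product. Since $\bbM_n$ is finite-dimensional, that algebraic tensor product is already the whole $C^*$-algebra $\bbM_n\otimes\C = M_n(\C)$.

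\emph{Surjectivity.} This is the key step. For $b\in\B$ define
\[
    c_{ij}=\sum_{k=1}^n e_{ki}\,b\,e_{jk}.
\]
For any $l,m$ one computes
\[
    e_{lm}c_{ij}=e_{li}be_{jm}=c_{ij}e_{lm},
\]
so $c_{ij}\in\C$. Moreover,
\[
    \sum_{i,j} e_{ij}c_{ij}=\sum_{i,j}e_{ii}be_{jj}=b,
\]
using $e_{ij}e_{ki}=\delta_{jk}e_{ii}$ and $\sum_j e_{jj}=1$. Hence $b=\Theta\bigl(\sum_{i,j} E_{ij}\otimes c_{ij}\bigr)$.

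\emph{Injectivity.} Suppose $\sum_{i,j}e_{ij}c_{ij}=0$ with $c_{ij}\in\C$. Fix $p,q$. Multiplying on the left by $e_{kp}$ and on the right by $e_{qk}$ gives $e_{kk}c_{pq}=0$. Summing over $k$ yields $c_{pq}=0$. So $\Theta$ is injective, and an injective $*$-homomorphism between $C^*$-algebras is isometric, so $\Theta$ is a $*$-isomorphism.

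The main point to get right is the averaging formula for $c_{ij}$ and checking that it lands in the relative commutant. The rest is routine matrix-unit bookkeeping, which relies on $1_{\A}=1_{\B}$ (so that $\sum_k e_{kk}=1_{\B}$).
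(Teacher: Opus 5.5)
Your proof is correct and is essentially the paper's argument: the paper simply asserts that $b \mapsto \bigl[\sum_{k} e_{ki}\,b\,e_{jk}\bigr]_{i,j}$ is a $*$-isomorphism $\B \to \bbM_n \otimes (\A' \cap \B)$. That map is exactly the inverse of your $\Theta$, and your surjectivity and injectivity computations supply the verification the paper leaves to the reader.
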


\begin{proof}
If $\{e_{ij}\} \subseteq \A$ is a self-adjoint system of matrix units, then the map
\[
    \B \to \bbM_n \otimes (\A' \cap \B): b \mapsto \begin{bmatrix} \sum_{k=1}^n e_{ki}be_{jk} \end{bmatrix}
\]
is a $*$-isomorphism.
\end{proof}

\begin{lemma} \label{fin_dim_commutant}
Let $\A \subseteq \B$ be a unital $C^*$-inclusion. If $\dim(\A) < \infty$ and $\dim(\B)=\infty$, then $\dim(\A' \cap \B)=\infty$.
\end{lemma}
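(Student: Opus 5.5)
We argue by contrapositive: assume $\dim(\A)<\infty$ and $\dim(\A'\cap\B)<\infty$, and show $\dim(\B)<\infty$. Since $\A$ is finite-dimensional, we write $\A=\bigoplus_{k=1}^m \A_k$ with $\A_k\cong\bbM_{n_k}$, and let $p_1,\dots,p_m$ be the minimal central projections of $\A$, so that $\sum_k p_k=1_{\B}$. Each $p_k$ commutes with $\A$, hence lies in $\A'\cap\B$.

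\textbf{Step 1: reduce to a single corner.} Consider the corner inclusions $\A p_k = \A_k \subseteq p_k\B p_k$, each unital with unit $p_k$. We want $\dim(p_k\B p_k)<\infty$ for every $k$, and then to pass to $\B$ itself. For that passage we use Lemma \ref{fin_dim_summands} inductively. Take $p=p_1$ and $p^\perp=p_2+\cdots+p_m$. Then $p^\perp\B p^\perp$ is handled by induction on $m$, applied to the inclusion $\A p^\perp\subseteq p^\perp\B p^\perp$, whose central summands are the $p_k$ with $k\ge 2$. The relative commutant in the corner is controlled by the next observation.

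\textbf{Step 2: relative commutants of corners.} We check that $(\A p_k)'\cap p_k\B p_k = p_k(\A'\cap\B)p_k$. For the inclusion $\supseteq$: if $x\in\A'\cap\B$, then $p_kxp_k$ commutes with $ap_k$ for all $a\in\A$, since $p_k$ is central in $\A$ and commutes with $x$. For the inclusion $\subseteq$: if $y\in p_k\B p_k$ commutes with $\A p_k$, then for $a\in\A$ we have $ay=ap_ky=yap_k=ya$, so $y\in\A'\cap\B$. The same computation works with $p^\perp$ in place of $p_k$. Consequently every such corner has a finite-dimensional relative commutant.

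\textbf{Step 3: the matrix case.} For a single summand, $\A_k\cong\bbM_{n_k}$ sits unitally in $p_k\B p_k$, so Lemma \ref{rel_commutant_matrix_alg} gives
\[
p_k\B p_k\cong\bbM_{n_k}\otimes\bigl(\A_k'\cap p_k\B p_k\bigr).
\]
This is finite-dimensional by Step 2, which gives the base case of the induction. Lemma \ref{fin_dim_summands} then combines the corners $p\B p$ and $p^\perp\B p^\perp$ to conclude $\dim(\B)<\infty$, contradicting the hypothesis $\dim(\B)=\infty$.

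The main point requiring care is Step 2: verifying that the corner relative commutants embed into $\A'\cap\B$, and that the induction applies cleanly to $\A p^\perp\subseteq p^\perp\B p^\perp$ as a unital inclusion. Both follow from the centrality of the $p_k$ in $\A$ and their membership in $\A'\cap\B$, so I expect no genuine obstacle.
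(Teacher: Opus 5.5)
Your proof is correct and is essentially the paper's argument run contrapositively. The paper uses Lemma \ref{fin_dim_summands} to find a central summand $z_j$ of $\A$ with $\dim(z_j\B z_j)=\infty$, then applies Lemma \ref{rel_commutant_matrix_alg} to make $(\A z_j)'\cap(z_j\B z_j)$ infinite-dimensional, and concludes via $\A'\cap\B=\sum_i((\A z_i)'\cap(z_i\B z_i))$; your Step 2 verifies that decomposition, which the paper asserts without proof, and your induction spells out the repeated use of Lemma \ref{fin_dim_summands} that the paper leaves implicit.
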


\begin{proof}
Since $\dim(\A) < \infty$, there exist $z_1, z_2, ..., z_k \in \Proj(Z(\A))$ such that $z_1+z_2+...+z_k=\one$ and $\A z_i \cong \bbM_{n_i}$, $1 \leq i \leq k$. By Lemma \ref{fin_dim_summands}, there exists $1 \leq j \leq k$ such that $\dim(z_j\B z_j)=\infty$. Now by Lemma \ref{rel_commutant_matrix_alg},
\[
    \dim(z_j\B z_j) = n_j^2\dim((\A z_j)' \cap (z_j\B z_j)),
\]
which implies
\[
    \dim((\A z_j)' \cap (z_j\B z_j)) = \infty.
\]
Since
\[
    \A' \cap \B = \sum_{i=1}^k ((\A z_i)' \cap (z_i\B z_i)),
\]
we conclude that $\dim(\A' \cap \B)=\infty$.
\end{proof}

\begin{theorem} \label{finite_general}
Let $\A \subseteq \B$ be a unital $C^*$-inclusion. If $\dim(\A) < \infty$, then the following are equivalent:
\begin{enumerate}
\item[i.] $\A \subseteq \B$ has the PEP;
\item[ii.] $\A \subseteq \B$ has the AEP;
\item[iii.] $\A \subseteq \B$ is aperiodic;
\item[iv.] $\A \subseteq \B$ has a unique pseudo-expectation;
\item[v.] $\A \subseteq \B$ has a unique conditional expectation;
\item[vi.] $\A' \cap \B = Z(\A)$.
\end{enumerate}
In particular, if $\dim(\B)=\infty$, then all these conditions are false.
\end{theorem}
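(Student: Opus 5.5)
The plan is to close the cycle $(i) \Rightarrow (ii) \Rightarrow (iii) \Rightarrow (iv) \Rightarrow (v) \Rightarrow (vi) \Rightarrow (i)$. The first three implications come directly from Theorem \ref{hierarchy}.

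\emph{$(iv) \Rightarrow (v)$.} Since $\dim(\A)<\infty$, $\A$ is injective, so $I(\A)=\A$. Hence pseudo-expectations are exactly conditional expectations. Moreover one exists, by extending $\id_{\A}$ via injectivity of $\A$. So uniqueness of the pseudo-expectation is the same as uniqueness of the conditional expectation; in particular $(iv) \iff (v)$.

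\emph{The decomposition.} As in the proof of Lemma \ref{fin_dim_commutant}, choose minimal central projections $z_1,\dots,z_k \in Z(\A)$ with $\sum_i z_i=\one$ and $\A z_i \cong \bbM_{n_i}$. Set $\C_i=(\A z_i)'\cap(z_i\B z_i)$. Lemma \ref{rel_commutant_matrix_alg} gives $z_i\B z_i \cong \bbM_{n_i}\otimes \C_i$, with $\A z_i$ corresponding to $\bbM_{n_i}\otimes 1$. An element of $\A'\cap\B$ commutes with each $z_i$, so it has no off-diagonal blocks. Therefore $\A'\cap\B=\bigoplus_i \C_i$, while $Z(\A)=\bigoplus_i \bbC z_i$. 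Thus $(vi)$ holds if and only if $\C_i=\bbC z_i$ for every $i$.

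\emph{Parametrizing conditional expectations.} Let $E:\B\to\A$ be a conditional expectation. By Tomiyama (or Choi's bimodule lemma), $E$ is an $\A$-bimodule map. Consequently, for $i\neq j$,
\[
E(z_ibz_j)=z_iE(b)z_j=0,
\]
because $E(b)\in\A$ and the $z_i$ are central in $\A$. So $E$ is determined by its restrictions $E_i: \bbM_{n_i}\otimes\C_i \to \bbM_{n_i}\otimes 1$. These restrictions are unital $\bbM_{n_i}$-bimodule maps. Writing $c\in\C_i$ as $\sum_k e_{k1}(e_{11}\otimes c)e_{1k}$ and using the bimodule property shows that $E_i=\id\otimes\varphi_i$ for a state $\varphi_i$ on $\C_i$. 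Conversely, any choice of states $\varphi_i$ yields a conditional expectation: apply $\id\otimes\varphi_i$ on the diagonal blocks and $0$ off the diagonal. The resulting map is a compression followed by cp maps, hence cp.

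\emph{$(v) \iff (vi)$.} By the parametrization, conditional expectations are unique if and only if every $\C_i$ has a unique state. A unital $C^*$-algebra has a unique state exactly when it is $\bbC$. By the decomposition, this is exactly condition $(vi)$.

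\emph{$(vi) \Rightarrow (i)$.} Assume $(vi)$, so $\C_i=\bbC z_i$ and hence $z_i\B z_i=\A z_i$. A pure state $\phi$ of $\A$ is a vector state on a single summand $\A z_i$, so $\phi(z_i)=1$. Let $\psi$ be any state extension of $\phi$ to $\B$. Then $\psi(z_i)=1$, and Cauchy--Schwarz gives $\psi(b)=\psi(z_ibz_i)$. Since $z_ibz_i\in\A z_i$, this equals $\phi(z_ibz_i)$, so the extension is unique. The set of state extensions of a pure state is a weak*-closed face of $S(\B)$. Since this face is a singleton, its element is pure, which gives the PEP.

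\emph{Final assertion.} If $\dim(\B)=\infty$, Lemma \ref{fin_dim_commutant} shows that $\A'\cap\B$ is infinite-dimensional. It therefore cannot equal the finite-dimensional $Z(\A)$, so $(vi)$ fails, and hence all six conditions fail.

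\emph{Main obstacle.} The step needing the most care is the identification of unital $\bbM_n$-bimodule maps $\bbM_n\otimes\C\to\bbM_n$ with states on $\C$, using the matrix units from Lemma \ref{rel_commutant_matrix_alg}.
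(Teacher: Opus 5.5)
Your proof is correct, but it is organized differently from the paper's. The paper gets $(v)\Rightarrow(i)$ and $(i)\Rightarrow(vi)$ by citing \cite[Theorems 2.8 and 3.3]{BunceChu1998}. It then proves $(vi)\Rightarrow(i)$ in three steps: Lemma \ref{fin_dim_commutant} gives $\dim(\B)<\infty$; each simple summand $\A p$ of $\A$ sits without multiplicity in a unique simple summand $\B q$ with $\A p=p\B p$; and state extensions from hereditary subalgebras are unique \cite[Theorem 5.1.13]{MurphyBook}.

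You instead stay inside the block picture from Lemma \ref{rel_commutant_matrix_alg}. Your parametrization identifies the conditional expectations $\B\to\A$ with tuples $(\varphi_i)\in\prod_i S(\C_i)$. Then $(v)\iff(vi)$ reduces to the fact that a unital $C^*$-algebra has a unique state iff it is $\bbC$. Your $(vi)\Rightarrow(i)$ uses only $z_i\B z_i=\A z_i$ and Cauchy--Schwarz, so you never need $\dim(\B)<\infty$ or the summand structure of $\B$; Lemma \ref{fin_dim_commutant} enters only for the final assertion.

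Your route is self-contained and gives more: a complete description of all conditional expectations. That description includes the existence of one, which condition $(v)$, as defined in the paper, implicitly requires. The paper's route is shorter but outsources the two nontrivial implications to Bunce--Chu.
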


\begin{proof}
By Theorem \ref{hierarchy},
\[
    (i) \implies (ii) \implies (iii) \implies (iv) \implies (v),
\]
and \cite[Theorem 2.8]{BunceChu1998} gives $(v) \implies (i)$. Now $(i) \implies (vi)$ by \cite[Theorem 3.3]{BunceChu1998}, so it only remains to show that $(vi) \implies (i)$. Since
\[
    \dim(\A' \cap \B) = \dim(Z(\A)) \leq \dim(\A) < \infty,
\]
Lemma \ref{fin_dim_commutant} implies that $\dim(\B) < \infty$. Furthermore, the fact that $\dim(\A' \cap \B)=\dim(Z(\A))$ ensures that every simple summand of $\A$ embeds without multiplicity into a unique simple summand of $\B$. Now if $\phi \in PS(\A)$, then there exists $p \in \Proj(Z(\A))$ such that $\A p$ is simple, $\phi|_{\A p} \in PS(\A p)$, and $\phi|_{\A p^\perp}=0$. By the previous discussion, there exists a unique $q \in \Proj(Z(\B))$ such that $\B q$ is simple, $\A p \subseteq \B q$, and $\A p = p\B p$. If $\tilde{\phi} \in PS(\B)$ and $\tilde{\phi}|_{\A}=\phi$, then $\tilde{\phi}|_{\B q} \in PS(\B q)$ and $\tilde{\phi}|_{\B q^\perp}=0$. Since $\A p$ is a hereditary subalgebra of $\B q$, $\tilde{\phi}|_{\B q}$ is uniquely determined by $\phi|_{\A p}$ \cite[Theorem 5.1.13]{MurphyBook}. It follows that $\tilde{\phi}$ is uniquely determined by $\phi$.
\end{proof}

\subsection{Finite-Index Inclusions} \label{fin_ind}

Finite-index $C^*$-inclusions (in the sense of Watatani) substantially generalize inclusions of finite-dimensional $C^*$-algebras \cite[Proposition 2.4.2]{WatataniBook}. In this section, we characterize when a finite-index inclusion has a unique conditional expectation. Later in the paper, this result will be a key technical tool for proving that certain finite-index inclusions have a unique pseudo-expectation.

\begin{definition}[{\cite[Definition 1.2.2]{WatataniBook}}]
We say that a conditional expectation $E:\B \to \A$ has \emph{finite index} if there exists a set $Q = \{(u_1,v_1), (u_2,v_2), ..., (u_n,v_n)\} \subseteq \B \times \B$ such that
\[
    (\forall b \in \B) ~ b = \sum_j u_jE(v_jb) = \sum_j E(bu_j)v_j.
\]
In that case, we say that $Q$ is a \emph{quasi-basis} for $E$, and we define
\[
    \Ind(E) = \sum_j u_jv_j \in \B.
\]
We say that an inclusion $\A \subseteq \B$ has \emph{finite index} if it admits a finite-index conditional expectation.
\end{definition}

The following theorem collects some basic facts from the theory of finite-index inclusions.

\begin{theorem}[\cite{WatataniBook}] \label{fin_ind_facts}
Suppose $E:\B \to \A$ has finite index. Then the following statements hold:
\begin{enumerate}
\item[i.] Every quasi-basis for $E$ yields the same value of $\Ind(E)$. Furthermore, $\Ind(E) \in Z(\B)$. \cite[Proposition 1.2.8]{WatataniBook}
\item[ii.] There exists a quasi-basis for $E$ of the form $Q = \{(u_1,u_1^*), (u_2,u_2^*), ..., (u_n,u_n^*)\}$. Thus $\Ind(E) \geq 0$. \cite[Lemma 2.1.6]{WatataniBook}
\item[iii.] For all $b \in \B$,
\[
    E(b^*b) \geq \frac{1}{\|\Ind(E)\|}b^*b.
\]
In particular, $E$ is faithful. \cite[Proposition 2.6.2]{WatataniBook}
\end{enumerate}
\end{theorem}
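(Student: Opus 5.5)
The statement collects three standard facts from Watatani's theory; I would prove them in the order (i), (ii), (iii). Parts (i) and (iii) are algebraic manipulations of the quasi-basis identities, together with the fact that $E$ is an $\A$-bimodule map. Part (ii) needs some Hilbert-module theory, and I expect it to be the main obstacle.

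\textbf{(i).} Let $\{(u_j,v_j)\}$ and $\{(x_k,y_k)\}$ be two quasi-bases for $E$. Expand each $v_j$ using the second quasi-basis, $v_j=\sum_k E(v_jx_k)y_k$, and then regroup with the first quasi-basis:
\[
\sum_j u_jv_j=\sum_k\Big(\sum_j u_jE(v_jx_k)\Big)y_k=\sum_k x_ky_k .
\]
So $\Ind(E)$ does not depend on the quasi-basis. For centrality, take $b\in\B$ and expand $bu_j=\sum_k u_kE(v_kbu_j)$. Regrouping with the right-hand quasi-basis identity gives
\[
b\Ind(E)=\sum_j bu_jv_j=\sum_k u_k\Big(\sum_j E(v_kbu_j)v_j\Big)=\sum_k u_kv_kb=\Ind(E)b .
\]

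\textbf{(ii).} I would view $\B$ as a right pre-Hilbert $\A$-module with inner product $\langle x,y\rangle=E(x^*y)$.
\begin{itemize}
\item \emph{Completeness.} The Cauchy--Schwarz inequality $E(vb)^*E(vb)\le\|E(vv^*)\|\,E(b^*b)$ applied to $b=\sum_j u_jE(v_jb)$ gives $\|b\|\le C\|E(b^*b)\|^{1/2}$. Hence the module norm is equivalent to the $C^*$-norm, and the module is complete.
\item \emph{A surjective adjointable map.} Define $R:\A^n\to\B$ by $R(a)=\sum_j u_ja_j$. It is adjointable, with $R^*(b)=(E(u_j^*b))_j$.
\item \emph{Invertibility.} $R$ is surjective, since $b=R\big((E(v_jb))_j\big)$. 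By the standard Hilbert-module fact that a surjective adjointable map has $RR^*$ invertible, the positive operator $RR^*=\sum_j\theta_{u_j,u_j}$ is invertible in $\mathcal{L}(\B_{\A})$. Checking that this fact is available is the step I regard as the main obstacle.
\item \emph{Normalizing.} Put $w_j=(RR^*)^{-1/2}u_j$. Then $\sum_j\theta_{w_j,w_j}=1$, that is, $b=\sum_j w_jE(w_j^*b)$ for all $b$.
\item \emph{The other side.} Applying this identity to $b^*$ and taking adjoints gives $b=\sum_jE(bw_j)w_j^*$. So $\{(w_j,w_j^*)\}$ is a quasi-basis, and $\Ind(E)=\sum_j w_jw_j^*\ge0$.
\end{itemize}

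\textbf{(iii).} Use the quasi-basis $\{(w_j,w_j^*)\}$ from (ii) and write $b=\sum_k w_k\xi_k$ with $\xi_k=E(w_k^*b)$.
\begin{itemize}
\item \emph{Matrix form.} We have $b^*b=\xi^*M\xi$, where $\xi$ is the column $(\xi_k)$ and $M=[w_j^*w_k]$.
\item \emph{Bounding $M$.} We have $M\le\|M\|$, and $\|M\|=\|[w_1\ \cdots\ w_n]\|^2=\|\sum_k w_kw_k^*\|=\|\Ind(E)\|$. Hence $b^*b\le\|\Ind(E)\|\,\xi^*\xi$.
\item \emph{Identifying $\xi^*\xi$.} By the bimodule property of $E$ and the quasi-basis identity $b^*=\sum_jE(b^*w_j)w_j^*$,
\[
\xi^*\xi=\sum_jE(b^*w_j)E(w_j^*b)=E\Big(\sum_jE(b^*w_j)w_j^*b\Big)=E(b^*b).
\]
\end{itemize}
This gives $E(b^*b)\ge\|\Ind(E)\|^{-1}b^*b$. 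Faithfulness of $E$ follows at once: if $E(b^*b)=0$ then $b^*b=0$, so $b=0$.
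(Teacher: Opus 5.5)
Your proposal is correct. The paper gives no argument of its own for this theorem and only points to \cite{WatataniBook} for each part, so your write-up is a self-contained replacement rather than a competing route. The computations in (i) and (iii) are the standard quasi-basis manipulations; (iii) is the usual Pimsner--Popa argument using $[w_j^*w_k]\le\|\Ind(E)\|$. Part (ii) is the usual normalization of a module frame.

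Two small remarks.

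First, your completeness estimate $\|b\|\le C\|E(b^*b)\|^{1/2}$ also shows that $E(b^*b)=0$ forces $b=0$. So the $\A$-valued form is definite, and (ii) does not secretly depend on the faithfulness proved in (iii).

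Second, the step you flagged as the main obstacle is true: it follows from the closed-range theorem for adjointable maps between Hilbert modules, as in Lance's book. But you can bypass it entirely.
\begin{itemize}
\item Define $S:\A^n\to\B$ by $S(a)=\sum_j v_j^*a_j$. Then $S^*(b)=(E(v_jb))_j$, so the identity $b=\sum_j u_jE(v_jb)$ says exactly that $RS^*=1$.
\item Hence $1=(RS^*)(RS^*)^*=RS^*SR^*\le\|S\|^2RR^*$.
\item Therefore $RR^*$ is invertible in the $C^*$-algebra $\mathcal{L}(\B_{\A})$ directly, with no appeal to open-mapping or closed-range arguments.
\end{itemize}
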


Here is the main result of this section.

\begin{theorem} \label{unique_CE_fin_ind}
Let $\A \subseteq \B$ be a finite-index inclusion. Then $\A \subseteq \B$ has a unique conditional expectation if and only if $\A' \cap \B = Z(\A)$.
\end{theorem}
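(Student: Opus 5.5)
The plan is to use a fixed finite-index conditional expectation $E:\B \to \A$ with quasi-basis $\{(u_j,v_j)\}_{j=1}^n$, together with the faithfulness of $E$ from Theorem \ref{fin_ind_facts}(iii). The key structural fact is that every conditional expectation onto $\A$ is obtained from $E$ by a density in the relative commutant $\A' \cap \B$.

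\emph{Step 1 (representing other expectations).} Let $F:\B \to \A$ be any conditional expectation. It is an $\A$-bimodule map by Tomiyama's theorem. For $b \in \B$ we expand using the quasi-basis:
\[
    F(b) = F\Big(\sum_j u_jE(v_jb)\Big) = \sum_j F(u_j)E(v_jb) = E(hb), \qquad h := \sum_j F(u_j)v_j \in \B.
\]
Next we show $h \in \A' \cap \B$. For $a \in \A$ and $b \in \B$ we have
\[
    E(ahb) = aF(b) = F(ab) = E(hab),
\]
so $E((ah-ha)b)=0$ for all $b$. Taking $b=(ah-ha)^*$, faithfulness of $E$ gives $ah=ha$. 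The same faithfulness argument shows that $h$ is uniquely determined by $F$.

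\emph{Step 2 ($\Leftarrow$).} Suppose $\A' \cap \B = Z(\A)$. Then $h \in Z(\A) \subseteq \A$, so $F(b) = E(hb) = hE(b)$. Evaluating at $b=1$ gives $1 = F(1) = h$, hence $F=E$. So the conditional expectation is unique.

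\emph{Step 3 ($\Rightarrow$).} Suppose $\A' \cap \B \neq Z(\A)$, and pick $x \in (\A' \cap \B)\setminus Z(\A)$.

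\begin{itemize}
\item First, $E$ maps $\A' \cap \B$ into $Z(\A)$, since $E(x)a = E(xa) = E(ax) = aE(x)$.
\item Hence $y = x - E(x)$ is a nonzero element of $\A' \cap \B$ with $E(y)=0$. Note that $y \neq 0$ because $E(x) \in Z(\A)$ while $x \notin Z(\A)$.
\item Since $E$ is $*$-preserving, one of the self-adjoint elements $k = \mathrm{Re}\,y$ or $k = \mathrm{Im}\,y$ is nonzero. It lies in $\A' \cap \B$ and satisfies $E(k)=0$.
\end{itemize}

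Choose $0<t<1/\|k\|$ and set $h = 1+tk$. Then $h \geq 0$, $h \in \A' \cap \B$, and so $h^{1/2} \in \A' \cap \B$. Define
\[
    F(b) = E(h^{1/2}bh^{1/2}).
\]
This map has the following properties.

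\begin{itemize}
\item $F$ is completely positive.
\item $F$ fixes $\A$: for $a \in \A$ we have $F(a) = E(ah) = aE(h) = a(1+tE(k)) = a$, using that $h^{1/2}$ commutes with $a$.
\item $F$ is therefore a conditional expectation onto $\A$.
\item $F \neq E$. Since $h^{1/2}$ commutes with $k$, we get $F(k) = E(hk) = E(k)+tE(k^2) = tE(k^2)$, while $E(k)=0$. Faithfulness of $E$ gives $E(k^2) \neq 0$.
\end{itemize}

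Thus $\A \subseteq \B$ has at least two conditional expectations.

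The only point needing care is Step 1. There, positivity or linearity alone does not suffice; we need the bimodule property of $F$ together with faithfulness of $E$ to place $h$ in $\A' \cap \B$. The rest of the argument is routine.
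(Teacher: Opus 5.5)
Your proof is correct and is essentially a self-contained version of the two results the paper only cites: Steps 1--2 are Watatani's argument that every conditional expectation has the form $b \mapsto E(hb)$ with $h \in \A' \cap \B$ (giving \cite[Corollary 1.4.3]{WatataniBook}), and Step 3 is a direct proof of the forward direction, which the paper takes from \cite[Proposition 3.1]{Zarikian2019a}. Note that Step 3 uses only the faithfulness of $E$ and not the quasi-basis, so that direction holds for any inclusion admitting a faithful conditional expectation.
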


\begin{proof}
($\Rightarrow$) \cite[Proposition 3.1]{Zarikian2019a}

($\Leftarrow$) \cite[Corollary 1.4.3]{WatataniBook}
\end{proof}

\begin{remark}
Specializing this result to the case when $\dim(\B) < \infty$, we recover the equivalence $(5) \iff (6)$ in Theorem \ref{finite_general} above. Note, however, that Theorem \ref{finite_general} only assumes $\dim(\A) < \infty$.
\end{remark}

\subsection{Corners}

If $\A \subseteq \B$ is a $C^*$-inclusion and $0 \neq p \in \Proj(\A)$, then $p\A p \subseteq p\B p$ is the corresponding ``corner'' $C^*$-inclusion. It is easy to see that aperiodicity passes to corners, but it is unclear whether the unique pseudo-expectation property does so as well. This suggests that the Question \ref{open} above might actually have a negative answer. Indeed, if $\A \subseteq \B$ has a unique pseudo-expectation but $p\A p \subseteq p\B p$ doesn't, then $\A \subseteq \B$ is not aperiodic. At the end of this section, we prove that the unique pseudo-expectation property does pass to \ul{full} corners, which narrows the search for counterexamples considerably.

\begin{theorem} \label{corner_aperiodicity}
If $\A \subseteq \B$ is aperiodic and $0 \neq p \in \Proj(\A)$, then $p\A p \subseteq p\B p$ is aperiodic.
\end{theorem}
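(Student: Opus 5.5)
The plan is to verify the definition of aperiodicity directly for the corner inclusion, reducing everything to the aperiodicity of $\A \subseteq \B$. Fix $b \in p\B p$, a non-zero hereditary subalgebra $\D \in \bbH(p\A p)$, and $\varepsilon>0$. The first observation is that $\D$ is also a non-zero hereditary subalgebra of $\A$. If $0 \leq a \leq d$ with $a \in \A$ and $d \in \D \subseteq p\A p$, then $a = pap$. Indeed, $\|(1-p)a^{1/2}\|^2 = \|(1-p)a(1-p)\| \leq \|(1-p)d(1-p)\| = 0$, so $a(1-p)=0=(1-p)a$. Hence $a \in p\A p$, and by hereditariness of $\D$ in $p\A p$ we get $a \in \D$. (Alternatively, note $\D = \overline{\D p\A p\D} = \overline{\D\A\D}$, since $\D = p\D p$.) So $\D \in \bbH(\A)$.

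Next I apply aperiodicity of $\A \subseteq \B$ to $b \in \B$, $\D \in \bbH(\A)$ and $\varepsilon$. This gives $d \in \D_+^1$ and $a \in \A$ with $\|dbd - a\| < \varepsilon$.

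It remains to move $a$ into $p\A p$. Since $d = pdp$, we have $dbd = p(dbd)p$. Compressing by $p$ is contractive, so
\[
\|dbd - pap\| = \|p(dbd - a)p\| \leq \|dbd-a\| < \varepsilon .
\]
Also $pap \in p\A p$, because $p \in \A$. Thus $d \in \D_+^1$ and $pap \in p\A p$ witness the aperiodicity condition for $p\A p \subseteq p\B p$.

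Note also that $p\A p \subseteq p\B p$ is a genuine $C^*$-inclusion, since both algebras share the unit $p$. The only step needing any care is checking that hereditary subalgebras of the corner $p\A p$ are hereditary in $\A$; this is standard and is handled by the estimate above. Everything else is immediate from the fact that compression by $p$ is contractive.
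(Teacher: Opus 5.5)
Your proof is correct and follows essentially the same route as the paper: view $\D$ as a hereditary subalgebra of $\A$, apply aperiodicity of $\A \subseteq \B$, and replace $a$ by $pap$ using $d = pdp$ and the contractivity of compression by $p$. The only difference is that you actually verify $\D \in \bbH(\A)$, which the paper asserts without comment.
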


\begin{proof}
Let $b \in \B$, $\D \in \bbH(p\A p)$, and $\varepsilon > 0$. Then $\D \in \bbH(\A)$ and so there exist $d \in \D_+^1$ and $a \in \A$ such that $\|d(pbp)d-a\|<\varepsilon$. But then
\[
    \|d(pbp)d-pap\| = \|pd(pbp)dp-pap\| \leq \|d(pbp)d-a\| < \varepsilon.
\]
\end{proof}

\begin{lemma} \label{corner_iso}
Let $\A$ be a (not necessarily unital) $C^*$-algebra with multiplier algebra $M(\A)$. Suppose $v \in M(\A)$ is a partial isometry, and set $p=v^*v$ and $q=vv^*$. Then the map $\pi:p\A p \to q\A q: x \mapsto vxv^*$ is a $*$-isomorphism.
\end{lemma}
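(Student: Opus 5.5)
The plan is to check directly that $\pi$ is well defined, multiplicative, $*$-preserving and bijective, with inverse given by the analogous map $y \mapsto v^*yv$. Everything rests on the partial isometry identities $v = vv^*v = qv = vp$ and $v^* = v^*vv^* = pv^* = v^*q$, which hold in $M(\A)$. They follow from the standard computation $(v-vv^*v)^*(v-vv^*v) = 0$, which only uses that $p = v^*v$ is a projection.

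The steps, in order, are as follows.

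\textbf{Well-definedness.} We need $\pi$ to land in $q\A q$. Since $\A$ is an ideal in $M(\A)$, for $x \in p\A p$ we have $vxv^* \in \A$. Moreover $vxv^* = (qv)x(v^*q) = q(vxv^*)q$, so $vxv^* \in q\A q$.

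\textbf{Multiplicativity.} For $x, y \in p\A p$ we have $\pi(x)\pi(y) = vx(v^*v)yv^* = vxpyv^* = vxyv^*$, because $xp = x$.

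\textbf{Preservation of adjoints.} This is immediate: $\pi(x)^* = (vxv^*)^* = vx^*v^* = \pi(x^*)$.

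\textbf{Bijectivity.} Define $\sigma: q\A q \to p\A p$ by $\sigma(y) = v^*yv$. By the symmetric argument with $v^*$ in place of $v$, this map is well defined. Then $\sigma(\pi(x)) = (v^*v)x(v^*v) = pxp = x$ for $x \in p\A p$, and likewise $\pi(\sigma(y)) = qyq = y$ for $y \in q\A q$. Hence $\pi$ is a bijective $*$-homomorphism, that is, a $*$-isomorphism.

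There is no real obstacle here. The only point needing care is that $v$ lies in $M(\A)$ rather than in $\A$, so we must invoke the ideal property of $\A$ in $M(\A)$ to guarantee $vxv^* \in \A$ before concluding that it lies in the corner $q\A q$.
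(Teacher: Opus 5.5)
Your proof is correct and follows essentially the same route as the paper: verify that $\pi$ is a $*$-homomorphism using $xp = x$, then show that $\sigma(y) = v^*yv$ is a two-sided inverse because $pxp = x$ and $qyq = y$. You do check some details the paper leaves implicit: the identities $v = vp = qv$, and that $vxv^*$ lies in $q\A q$ because $\A$ is an ideal in $M(\A)$.
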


\begin{proof}
It is easy to see that $\pi$ is a $*$-homomorphism. Indeed,
\[
    \pi(xy) = vxyv^* = vxpyv^* = vxv^*vyv^* = \pi(x)\pi(y), \qquad x, y \in p\A p.
\]
By symmetry, $\sigma:q\A q \to p\A p: x \mapsto v^*xv$ is a $*$-homomorphism. Now
\[
    \sigma(\pi(x)) = \sigma(vxv^*) = v^*vxv^*v = pxp = x, \qquad x \in p\A p.
\]
Again by symmetry, $\sigma=\pi^{-1}$.
\end{proof}

The following result is joint work with David Pitts, who kindly agreed to its inclusion in this paper.

\begin{theorem}[Pitts-Zarikian] \label{PZ}
Let $\A \subseteq \B$ be a unital $C^*$-inclusion. If $p \in \Proj(\A)$ is \ul{full} (not contained in a proper ideal), then there exists a bijective correspondence between the pseudo-expectations for $\A \subseteq \B$ and the pseudo-expectations for $p\A p \subseteq p\B p$. In particular, $p\A p \subseteq p\B p$ has a unique pseudo-expectation if and only if $\A \subseteq \B$ has a unique pseudo-expectation.
\end{theorem}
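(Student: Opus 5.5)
Since $p$ is full in the unital algebra $\A$, the ideal generated by $p$ contains $1$. Hence there are $x_1,\dots,x_n \in \A$ with $\sum_j x_j^* p x_j = 1$. Set $v_j = p x_j$ and form the column $V=[v_1,\dots,v_n]^T \in M_{n,1}(\A)$, so that $V^*V = 1$. Then $V$ is an isometry from $\A$ into $M_n(p\A p)$-land: $VV^* = [v_iv_j^*]$ is a projection $Q \in M_n(p\A p)$. Lemma \ref{corner_iso}, applied inside $M_{n+1}(\A)$, gives the $*$-isomorphisms $\A \cong Q M_n(p\A p) Q$ and $\B \cong Q M_n(p\B p) Q$ via $x \mapsto VxV^*$, compatible with the inclusions. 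So up to isomorphism, $\A\subseteq\B$ is a corner of $M_n(p\A p) \subseteq M_n(p\B p)$, and it is a full corner there.

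The plan is to construct the two maps of the correspondence directly.

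\textbf{Restriction.} First identify $I(p\A p)$ with $pI(\A)p$. This is Hamana's result that the injective envelope of a corner is the corresponding corner, since $p\in \A\subseteq I(\A)$ and $pI(\A)p$ is injective and rigid over $p\A p$. Given a pseudo-expectation $\Psi:\B\to I(\A)$, it is an $\A$-bimodule map because it fixes the $C^*$-algebra $\A$ (Choi's multiplicative domain argument). In particular $\Psi(pbp)=p\Psi(b)p$. Therefore $\Psi|_{p\B p}$ is a pseudo-expectation for $p\A p \subseteq p\B p$.

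\textbf{Extension.} Conversely, given a pseudo-expectation $\psi:p\B p\to pI(\A)p$, define
\[
\Psi(b) = \sum_{i,j} v_i^*\, \psi(v_i b v_j^*)\, v_j, \qquad b\in \B.
\]
Equivalently, $\Psi(b) = V^*\psi^{(n)}(VbV^*)V$. This is completely positive, with values in $I(\A)$. It is unital and fixes $\A$, because for $a\in\A$ each $v_iav_j^*$ lies in $p\A p$, so
\[
\Psi(a) = \sum v_i^*v_i\, a\, v_j^*v_j = V^*VaV^*V = a.
\]

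\textbf{Inverse bijections.} It remains to check that these two maps are mutually inverse. For $\Psi$ a pseudo-expectation, the bimodule property gives
\[
\sum v_i^*\Psi(v_ibv_j^*)v_j = V^*V\,\Psi(b)\,V^*V = \Psi(b).
\]
In the other direction, for $b\in p\B p$, the bimodule property of $\psi$ over $p\A p$ is needed. The point is that $\psi^{(n)}$ is a $M_n(p\A p)$-bimodule map, since it fixes that algebra. So
\[
V^*\psi^{(n)}(VbV^*)V = V^*V\,\psi(b)\,V^*V.
\]
Here $V^*V=1$ is restricted to $p$, where $pVV^*$-type cancellations occur. More precisely, $VbV^* = (Vp)b(Vp)^*$ with $Vp \in M_{n,1}(\A)$. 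It is easiest to note that $\psi^{(n)}$ extended to $M_{n+1}$-corners is a bimodule map over the linking algebra generated by the $v_i$. Alternatively, invoke uniqueness of the bimodule extension.

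\textbf{Main obstacle.} The main obstacle is this last verification. It needs the right bimodule property over elements $v_i\in p\A$ that do not lie in $p\A p$, and it needs the identification $I(p\A p)=pI(\A)p$. I would handle both by working in the full matrix inclusion $M_{n+1}(\A)\subseteq M_{n+1}(\B)$ with the projection $P=p\oplus\cdots\oplus p\oplus 1$. There I would use that $\psi^{(n)}$ is the compression of a ucp map fixing the $C^*$-algebra spanned by $p\A p$-matrices together with the off-diagonal entries $v_i$, which are multiplicative-domain elements. The uniqueness statement then follows immediately from the bijection.
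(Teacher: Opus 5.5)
Your argument is correct in substance and takes a different route from the paper. However, the step you call the ``main obstacle'' is not one, and the workaround you sketch for it should be dropped.

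For $b\in p\B p$ you already wrote $VbV^*=(Vp)b(Vp)^*$. The point to add is that the entries of $Vp$ are $v_ip=px_ip\in p\A p$, not merely elements of $\A$. Since $\psi$ fixes the $C^*$-algebra $p\A p$ inside $I(p\A p)=pI(\A)p$, Choi's theorem makes it $p\A p$-bimodular. Hence $\psi(v_ibv_j^*)=(px_ip)\,\psi(b)\,(px_j^*p)$, and
\[
\sum_{i,j}v_i^*\psi(v_ibv_j^*)v_j=\Big(\sum_i x_i^*px_i\Big)p\,\psi(b)\,p\Big(\sum_j x_j^*px_j\Big)=p\,\psi(b)\,p=\psi(b).
\]
No bimodule property over the $v_i\notin p\A p$ is needed.

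By contrast, the inference you actually display does not follow as written. You argue that $\psi^{(n)}$ is $M_n(p\A p)$-bimodular, so $V^*\psi^{(n)}(VbV^*)V=V^*V\psi(b)V^*V$; but $V$ does not have entries in $p\A p$. Your $M_{n+1}$ linking-algebra repair is also unsupported. The map $\psi$ is defined only on $p\B p$, and the natural ucp map on the larger algebra that fixes the off-diagonal $v_i$ is $\Psi^{(n+1)}$. Its compression is $\psi^{(n)}$ only if $\Psi|_{p\B p}=\psi$, which is the very thing being proved.

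One smaller point: passing from fullness to $\sum_jx_j^*px_j=1$ needs the usual short argument. Write $1=\sum_ja_jpb_j$; Cauchy--Schwarz makes $c=\sum_jb_j^*pb_j$ invertible; then take $x_j=b_jc^{-1/2}$. This is standard but should be stated.

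With that repaired, here is how your route compares with the paper's. The paper stabilizes: Brown's partial isometry $v\in M(\bbK\otimes\A)$, with $v^*v=I\otimes 1$ and $vv^*=I\otimes p$, gives an isomorphism of $\bbK\otimes\A\subseteq\bbK\otimes\B$ onto $\bbK\otimes p\A p\subseteq\bbK\otimes p\B p$. The paper then applies twice the bijection between pseudo-expectations of an inclusion and of its stabilization, \cite[Theorem 2.2.1]{Zarikian2025}.

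Your column $V$ is a finite analogue of Brown's $v$. It is available because $\A$ is unital, so fullness is witnessed by finitely many $x_j$. This lets you avoid multiplier algebras, $\bbK$, and the stabilization theorem; you need only Hamana's identification $I(p\A p)=pI(\A)p$ and Choi's multiplicative-domain theorem.

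What you gain is a self-contained proof with an explicit correspondence. It is simply restriction $\Psi\mapsto\Psi|_{p\B p}$, with inverse $\psi\mapsto V^*\psi^{(n)}(V\,\cdot\,V^*)V$, and further facts can be read off from it directly (for instance, that faithfulness is preserved in both directions). The paper's proof is shorter given its citations and fits the Morita-equivalence picture, but it leaves the correspondence implicit.
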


\begin{proof}
Denote by $\bbK$ the compact operators on $\ell^2$. By \cite[Lemma 2.5]{Brown1977}, there exists a partial isometry $v \in M(\bbK \otimes \A)$ such that $v^*v=I \otimes 1$ and $vv^*=I \otimes p$. Now $M(\bbK \otimes \A) \subseteq M(\bbK \otimes \B)$ \cite[Section 3.12.12]{PedersenBook}, which implies $v \in M(\bbK \otimes \B)$. Thus by Lemma \ref{corner_iso}, $\bbK \otimes \B \cong \bbK \otimes (p\B p)$ via a $*$-isomorphism that maps $\bbK \otimes \A$ onto $\bbK \otimes (p\A p)$. It follows that the pseudo-expectations for $\bbK \otimes \A \subseteq \bbK \otimes \B$ and $\bbK \otimes (p\A p) \subseteq \bbK \otimes (p\B p)$ are in bijective correspondence. By \cite[Theorem 2.2.1]{Zarikian2025}, the pseudo-expectations for $\A \subseteq \B$ and $\bbK \otimes \A \subseteq \bbK \otimes \B$ are in bijective correspondence. Likewise, the pseudo-expectations for $\bbK \otimes (p\A p) \subseteq \bbK \otimes (p\B p)$ and $p\A p \subseteq p\B p$ are in bijective correspondence. Thus the pseudo-expectations for $\A \subseteq \B$ and $p\A p \subseteq p\B p$ are in bijective correspondence.
\end{proof}

We leave the general question as an open problem.

\begin{question} \label{open_corner}
If $\A \subseteq \B$ has a unique pseudo-expectation and $0 \neq p \in \Proj(\A)$ is arbitrary, does $p\A p \subseteq p\B p$ have a unique pseudo-expectation?
\end{question}

\section{Dynamical Results} \label{dynamical_results}

\subsection{The Inclusion $C_r^*(G) \subseteq \A \rtimes_{\alpha,r} G$}

\phantom{}\\

In this section we analyze unique extension properties for the $C^*$-inclusion $C_r^*(G) \subseteq \A \rtimes_{\alpha,r} G$, showing that they are related to ``unique ergodicity'' of the action. For abelian groups and finite groups, we obtain complete characterizations. For general discrete groups, we obtain distinct necessary and sufficient conditions, neither of which seems easy to check because they involve equivariant ucp maps $\A \to C(\partial_FG)$, where $\partial_FG$ is Furstenberg boundary of $G$ (see Section \ref{inj} above). The situation is more tractable if one specializes to amenable groups, since $C(\partial_FG)=\bbC$ in that case.

\begin{lemma}[{\cite[Exercise 4.1.4]{BrownOzawaBook}}] \label{equivariant_extension}
Let $(\A,G,\alpha)$, $(\B,G,\beta)$ be $C^*$-dynamical systems and $\phi:\A \to \B$ be a $G$-equivariant ucp map. Then there exists a unique ucp map $\Phi:\A \rtimes_{\alpha,r} G \to \B \rtimes_{\beta,r} G$ such that
\[
    (\forall a \in C_c(G,\A)) \quad \Phi\left(\sum_g a_g\lambda_g\right) = \sum_g \phi(a_g)\lambda_g.
\]
(Some authors write $\Phi = \phi \rtimes G$.) In particular, if $\phi:\A \to \bbC$ is a $G$-invariant state, then $\Phi:\A \rtimes_{\alpha,r} G \to C_r^*(G)$ is a conditional expectation. Lastly, $\Phi$ is faithful if and only if $\phi$ is faithful.
\end{lemma}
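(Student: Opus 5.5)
The plan is to build $\Phi$ from a covariant-representation picture of the reduced crossed product, and then read off the three additional claims from the defining formula.

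\emph{Existence.} Represent $\B \subseteq B(\H)$ faithfully, and realize $\B \rtimes_{\beta,r} G$ inside $B(\H \otimes \ell^2(G))$ via the regular representation $\tilde\pi(b)=\sum_h \beta_h^{-1}(b)\otimes E_{h,h}$ and $1\otimes\lambda_g$. Realize $\A \rtimes_{\alpha,r} G$ the same way on $\mathcal{K}\otimes\ell^2(G)$, with $\A\subseteq B(\mathcal K)$. By Stinespring, write $\phi(a)=V^*\sigma(a)V$ with $\sigma:\A\to B(\mathcal L)$ a representation and $V:\H\to\mathcal L$ an isometry. Set $W=V\otimes 1:\H\otimes\ell^2(G)\to\mathcal L\otimes\ell^2(G)$, and consider the regular covariant pair $(\tilde\sigma,1\otimes\lambda)$ with $\tilde\sigma(a)=\sum_h\sigma(\alpha_h^{-1}(a))\otimes E_{h,h}$. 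Its integrated form factors through $\A\rtimes_{\alpha,r}G$, since regular representations induced from any faithful or non-faithful representation are dominated by the reduced norm. Define $\Phi(x)=W^*(\tilde\sigma\rtimes\lambda)(x)W$, which is ucp. For $a\in\A$ and $g\in G$ we compute
\[
W^*\tilde\sigma(a)(1\otimes\lambda_g)W=\sum_h V^*\sigma(\alpha_h^{-1}(a))V\otimes E_{h,h}\lambda_g=\sum_h\phi(\alpha_h^{-1}(a))\otimes E_{h,h}\lambda_g .
\]
By equivariance this equals $\sum_h\beta_h^{-1}(\phi(a))\otimes E_{h,h}\lambda_g=\tilde\pi(\phi(a))(1\otimes\lambda_g)$, which gives the formula. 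The one point to check is whether $\tilde\sigma\rtimes\lambda$ is bounded on the reduced crossed product when $\sigma$ is not faithful; I expect this to be the main obstacle. It is handled by noting that $\tilde\sigma\rtimes\lambda$ is the regular representation induced by $\sigma$, and any induced regular representation is weakly contained in the one induced by a faithful representation, so it is bounded by the reduced norm.

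\emph{Uniqueness, the conditional expectation claim, and faithfulness.} Uniqueness follows from norm density of $C_c(G,\A)$ and continuity of $\Phi$. If $\B=\bbC$ with the trivial action, then $\B\rtimes_{\beta,r}G=C_r^*(G)$. Here $\Phi(\lambda_g)=\phi(1)\lambda_g=\lambda_g$, so $\Phi$ is a ucp idempotent onto $C_r^*(G)$, hence a conditional expectation by Tomiyama.

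For faithfulness, first observe the intertwining $\bbE_{\B}\circ\Phi=\phi\circ\bbE_{\A}$, which holds on $C_c$ and therefore everywhere by continuity. If $\phi$ is faithful and $\Phi(x^*x)=0$, then $\phi(\bbE_{\A}(x^*x))=0$, so $\bbE_{\A}(x^*x)=0$, and faithfulness of $\bbE_{\A}$ gives $x=0$. Conversely, $\Phi|_{\A}=\phi$, so faithfulness of $\Phi$ forces faithfulness of $\phi$.
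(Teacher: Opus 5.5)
The paper does not prove this lemma. It quotes it from Brown--Ozawa (Exercise 4.1.4), and the two supplementary claims (the conditional expectation onto $C_r^*(G)$ and the faithfulness equivalence) are stated without argument. Your proposal is a correct and complete proof of all three parts.

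In the existence step you use $(1\otimes\lambda_g)W=W(1\otimes\lambda_g)$ without stating it. With that identity, equivariance gives $W^*\tilde\sigma(a)(1\otimes\lambda_g)W=\tilde\pi(\phi(a))(1\otimes\lambda_g)$. Since $(\tilde\pi\rtimes\lambda)(\B\rtimes_{\beta,r}G)$ is norm closed, the range of $\Phi$ lies in it, and composing with the inverse of this faithful representation gives a ucp map into $\B\rtimes_{\beta,r}G$.

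The obstacle you flag, boundedness of $\tilde\sigma\rtimes\lambda$ on the reduced crossed product when $\sigma$ is not faithful, is handled correctly by your weak-containment remark. You can also avoid it entirely. Replace $\sigma$ by $\sigma\oplus\pi_0$ with $\pi_0$ faithful, and $V$ by $\xi\mapsto(V\xi,0)$. This is still a Stinespring dilation of $\phi$, and the regular representation induced from a faithful representation computes the reduced norm by definition.

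A common alternative avoids Stinespring altogether. In the matrix picture of Section~\ref{notation}, an element of $\A\rtimes_{\alpha,r}G$ is a $G\times G$ matrix with entries in $\A$; the $(h,k)$ entry of $\sum_g a_g\lambda_g$ is $\alpha_h^{-1}(a_{hk^{-1}})$. One applies $\phi$ entrywise and checks complete positivity and the norm bound on finite corners, where the map is just $\phi^{(n)}$. Equivariance, $\phi(\alpha_h^{-1}(a))=\beta_h^{-1}(\phi(a))$, is exactly what turns the matrix of $a\lambda_g$ into the matrix of $\phi(a)\lambda_g$.

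That route is more elementary and stays inside the concrete model the paper works with (compare Lemma~\ref{fin_ind_CE}). Yours gets complete positivity for free, at the cost of invoking Stinespring and a fact about induced representations.

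Your faithfulness argument via $\bbE_{\B}\circ\Phi=\phi\circ\bbE_{\A}$ is the same intertwining device the paper uses at the end of the proof of Theorem~\ref{center_commutant}.
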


\begin{lemma} \label{rel_commutant_condition}
Let $(\A,G,\alpha)$ be a $C^*$-dynamical system and $x \sim \sum_g x_g\lambda_g \in \A \rtimes_{\alpha,r} G$. Then
\[
    x \in C_r^*(G)' \cap (\A \rtimes_{\alpha,r} G)
    \iff \alpha_h(x_g)=x_{hgh^{-1}} \text{ for all } g, h \in G.
\]
\end{lemma}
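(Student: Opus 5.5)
Since $C_r^*(G)$ is generated as a $C^*$-algebra by the unitaries $\lambda_h$, $h \in G$, an element $x \in \A \rtimes_{\alpha,r} G$ lies in $C_r^*(G)'$ if and only if $\lambda_h x = x\lambda_h$ for every $h \in G$, or equivalently $\lambda_h x \lambda_h^* = x$ for every $h \in G$. So the plan is to compute the Fourier coefficients of $\lambda_h x \lambda_h^*$ and then use the fact that an element of $\A \rtimes_{\alpha,r} G$ is determined by its Fourier coefficients, i.e.\ $y = 0$ iff $\bbE_{\A}(y\lambda_g^*) = 0$ for all $g$. This is the faithfulness of $\bbE_{\A}$ applied to $y^*y$ together with the standard coefficient argument, or one can simply view $\A \rtimes_{\alpha,r} G$ inside $\A \overline{\rtimes}_\alpha G$ as in Section \ref{inj}.

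First I verify the coefficient formula on $C_c(G,\A)$. For $x = \sum_g x_g\lambda_g$ the covariance relations give
\[
\lambda_h x_g \lambda_g \lambda_h^* = \alpha_h(x_g)\lambda_{hgh^{-1}},
\]
so that $\lambda_h x\lambda_h^* = \sum_k \alpha_h(x_{h^{-1}kh})\lambda_k$. Equivalently,
\[
\bbE_{\A}(\lambda_h x \lambda_h^* \lambda_k^*) = \alpha_h(\bbE_{\A}(x\lambda_{h^{-1}kh}^*)).
\]
Both sides of this last identity are norm-continuous in $x$. The right side involves only $\bbE_{\A}$, the automorphism $\alpha_h$ and multiplication by fixed unitaries. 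The left side is continuous because multiplication by fixed unitaries is continuous. Hence the identity extends from the dense subalgebra $C_c(G,\A)$ to all $x \in \A \rtimes_{\alpha,r} G$, with $x_g := \bbE_{\A}(x\lambda_g^*)$. In words, the $k$-th coefficient of $\lambda_h x \lambda_h^*$ is $\alpha_h(x_{h^{-1}kh})$.

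Then $\lambda_h x \lambda_h^* = x$ for all $h$ holds iff $\alpha_h(x_{h^{-1}kh}) = x_k$ for all $h,k$. Substituting $g = h^{-1}kh$, so that $k = hgh^{-1}$, this becomes $\alpha_h(x_g) = x_{hgh^{-1}}$ for all $g,h \in G$, which is the stated condition.

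The only point needing care is the uniqueness of Fourier coefficients, namely that $x$ is determined by $(x_g)_{g \in G}$. If all the coefficients of $y$ vanish, then by continuity $\bbE_{\A}(y z) = 0$ for all $z \in \A \rtimes_{\alpha,r} G$, since $z$ can be approximated by elements of $C_c(G,\A)$. Taking $z = y^*$ gives $\bbE_{\A}(yy^*) = 0$, and faithfulness of $\bbE_{\A}$ then gives $y = 0$. The paper already notes this fact in Section \ref{inj}, so I expect no real obstacle.
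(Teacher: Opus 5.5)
Your proposal is correct and follows the paper's proof. Both reduce to $\lambda_h x\lambda_h^* = x$ for all $h$, read off the coefficients of $\lambda_h x\lambda_h^*$ as $\alpha_h(x_{h^{-1}kh})$, and compare coefficients. You also supply two details the paper leaves implicit: the density extension of the coefficient formula, and the uniqueness of Fourier coefficients via faithfulness of $\bbE_{\A}$. In the latter, the step hidden behind your ``by continuity'' is $\bbE_{\A}(ya\lambda_g)=\bbE_{\A}(y\lambda_g)\alpha_{g^{-1}}(a)=0$.
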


\begin{proof}
\begin{eqnarray*}
    x \in C_r^*(G)' \cap (\A \rtimes_{\alpha,r} G)
    &\iff& \lambda_hx=x\lambda_h \text{ for all } h \in G\\
    &\iff& \lambda_hx\lambda_h^*=x \text{ for all } h \in G\\
    &\iff& \sum_g \alpha_h(x_g)\lambda_{hgh^{-1}}=\sum_g x_g\lambda_g \text{ for all } h \in G\\
    &\iff& \alpha_h(x_g)=x_{hgh^{-1}} \text{ for all } g, h \in G.
\end{eqnarray*}
\end{proof}

\subsubsection{The case of $G$ abelian}

We denote by $S_G(\A)$ the $G$-invariant states on $\A$.

\begin{theorem} \label{abelian_main}
Let $(\A,G,\alpha)$ be a $C^*$-dynamical system. If $G$ is \ul{abelian}, then the following are equivalent:
\begin{enumerate}
\item[i.] $C_r^*(G) \subseteq \A \rtimes_{\alpha,r} G$ has the PEP;
\item[ii.] $C_r^*(G) \subseteq \A \rtimes_{\alpha,r} G$ has the AEP;
\item[iii.] $C_r^*(G) \subseteq \A \rtimes_{\alpha,r} G$ is aperiodic;
\item[iv.] $C_r^*(G) \subseteq \A \rtimes_{\alpha,r} G$ has a unique pseudo-expectation;
\item[v.] $C_r^*(G) \subseteq \A \rtimes_{\alpha,r} G$ has a unique conditional expectation;
\item[vi.] $|S_G(\A)|=1$ (i.e., $\alpha:G \curvearrowright \A$ is uniquely ergodic).
\end{enumerate}
If these conditions hold, then the following equivalent conditions also hold:
\begin{enumerate}
\item[vii.] $C_r^*(G) \subseteq \A \rtimes_{\alpha,r} G$ is a MASA;
\item[viii.] $\A^G=\bbC\one$.
\end{enumerate}
Schematically,
\[
    (i) \iff (ii) \iff (iii) \iff (iv) \iff (v) \iff (vi) \implies (vii) \iff (viii).
\]
\end{theorem}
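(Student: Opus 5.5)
Since $G$ is abelian, $C_r^*(G) \cong C(\widehat{G})$ is commutative and equal to its own injective envelope only when $\widehat G$ is extremally disconnected, so I will not work with $I(C_r^*(G))$ directly. Instead I use Theorem \ref{hierarchy} for $(i)\Rightarrow(ii)\Rightarrow(iii)\Rightarrow(iv)\Rightarrow(v)$. It then suffices to prove $(v)\Rightarrow(vi)$ and $(vi)\Rightarrow(i)$, together with $(vi)\Rightarrow(viii)$ and $(vii)\iff(viii)$.

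For $(v)\Rightarrow(vi)$, Lemma \ref{equivariant_extension} sends each $\phi \in S_G(\A)$ to a conditional expectation $\Phi_\phi = \phi \rtimes G$ onto $C_r^*(G)$. The map $\phi\mapsto\Phi_\phi$ is injective because $\phi(a)=\Phi_\phi(a)$ for $a \in \A$. Since $G$ is abelian, hence amenable, $S_G(\A)\neq\emptyset$: average any state along an invariant mean. So uniqueness of the conditional expectation forces $|S_G(\A)|=1$.

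For $(vi)\Rightarrow(i)$, let $\chi\in\widehat G$ be a pure state on $C_r^*(G)=C(\widehat G)$, so $\chi(\lambda_g)=\chi(g)$ with $|\chi(g)|=1$. If $\omega$ is any state on $\A\rtimes_{\alpha,r}G$ extending $\chi$, then each $\lambda_g$ lies in the multiplicative domain of $\omega$. Hence $\omega(a\lambda_g)=\omega(a)\chi(g)$ and $\omega(\lambda_g a\lambda_g^*)=\omega(a)$. So $\omega|_{\A}\in S_G(\A)=\{\phi_0\}$, and $\omega$ is determined on $C_c(G,\A)$ by $\omega(\sum a_g\lambda_g)=\sum \phi_0(a_g)\chi(g)$. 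Thus there is exactly one state extension, and it must be pure because the set of extensions is a face of the state space. This gives the PEP.

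For $(vi)\Rightarrow(viii)$: if $a=a^*\in\A^G$ is not scalar, pick two distinct points $s,t$ in its spectrum. Define states on $C^*(a)$ by evaluation at $s$ and at $t$, extend them to $\A$, and average them with an invariant mean. The results are $G$-invariant states that still take values $s$ and $t$ at $a$, since $a$ is fixed, contradicting $(vi)$.

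For $(vii)\iff(viii)$, I use Lemma \ref{rel_commutant_condition}. Because $G$ is abelian, $hgh^{-1}=g$, so $x\in C_r^*(G)^c$ exactly when every Fourier coefficient $x_g\in\A^G$. If $\A^G=\bbC\one$, then every coefficient is a scalar, so $x$ lies in the closure of $C_c(G,\bbC)$ and hence in $C_r^*(G)$. To justify this step I average $x$ by Fejér-type approximations, or alternatively apply the conditional expectation $\Phi_\phi$: it satisfies $\Phi_\phi(x)_g=\phi(x_g)=x_g$, and since the Fourier coefficients determine $x$ we get $x=\Phi_\phi(x)\in C_r^*(G)$. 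Conversely, $\A^G\subseteq C_r^*(G)^c$, and $\A^G\cap C_r^*(G)=\bbC\one$ because $\bbE_{\A}$ maps $C_r^*(G)$ to $\bbC$. The main subtlety I expect is verifying carefully that states are determined by their values on $C_c(G,\A)$ and that the multiplicative domain argument applies; everything else is bookkeeping.
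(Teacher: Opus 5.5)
Your proof is correct. For the core cycle (i)$\Rightarrow\cdots\Rightarrow$(vi)$\Rightarrow$(i) and for (vii)$\Rightarrow$(viii) you follow the paper. That is, you use the hierarchy theorem, the injection $\phi\mapsto\phi\rtimes G$ from Lemma \ref{equivariant_extension}, the multiplicative-domain computation showing that every state extension of a character restricts to a $G$-invariant state on $\A$, and the facts $\A^G\subseteq C_r^*(G)^c$ and $\A\cap C_r^*(G)=\bbC\one$.

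You depart from the paper in two places.
\begin{itemize}
\item The paper reaches the MASA condition (vii) from the PEP (i) by quoting Bunce--Chu (the PEP forces $C_r^*(G)^c=Z(C_r^*(G))$). You instead prove (vi)$\Rightarrow$(viii) directly: averaging extensions of two point evaluations on $C^*(a,\one)$ over an invariant mean yields two distinct invariant states.
\item For (viii)$\Rightarrow$(vii), the paper appeals to the Approximation Property of $G$, via Suzuki's result, to conclude that an element with scalar Fourier coefficients lies in $C_r^*(G)$. You use the conditional expectation $\Phi_\phi=\phi\rtimes G$ for a single invariant state $\phi$. Since $\bbE_{\A}\circ\Phi_\phi=\phi\circ\bbE_{\A}$ (check on $C_c(G,\A)$ and extend by continuity) and $\Phi_\phi$ is $C_r^*(G)$-bimodular, $x$ and $\Phi_\phi(x)$ have the same Fourier coefficients. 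They therefore coincide by faithfulness of $\bbE_{\A}$.
\end{itemize}

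Your route is self-contained and avoids both external citations. The step ``scalar Fourier coefficients $\Rightarrow$ membership in $C_r^*(G)$'' works for any discrete group as soon as $S_G(\A)\neq\emptyset$. The paper's route needs no invariant state but leans on the Approximation Property machinery and on Bunce--Chu. Both are valid here because abelian groups are amenable.

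One bookkeeping point: the hierarchy theorem only yields \emph{at most one} conditional expectation in (iv)$\Rightarrow$(v). Existence comes from $S_G(\A)\neq\emptyset$ together with Lemma \ref{equivariant_extension}. You already have both ingredients, so just state this explicitly.
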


\begin{proof}
By Theorem \ref{hierarchy} and Lemma \ref{equivariant_extension},
\[
    (i) \implies (ii) \implies (iii) \implies (iv) \implies (v) \implies (vi),
\]
and by \cite[Theorem 3.3]{BunceChu1998}, $(i) \implies (vii)$.\\

($(vi) \implies (i)$) Suppose $\omega \in S_G(\A)$ is the unique $G$-invariant state on $\A$. Fix $\phi \in PS(C_r^*(G))$ and let $\tilde{\phi} \in S(\A \rtimes_{\alpha,r} G)$ be such that $\tilde{\phi}|_{C_r^*(G)}=\phi$. Since $C_r^*(G)$ is abelian, $\phi$ is multiplicative. Thus for any $a \in C_c(G,\A)$, we have that
\[
    \tilde{\phi}\left(\sum_g a_g\lambda_g\right)=\sum_g \tilde{\phi}(a_g\lambda_g) = \sum_g \tilde{\phi}(a_g)\phi(\lambda_g).
\]
Now for any $a \in \A$ and $g \in G$,
\begin{eqnarray*}
    \tilde{\phi}(\alpha_g(a)) 
    &=& \tilde{\phi}(\lambda_ga\lambda_g^*) = \phi(\lambda_g)\tilde{\phi}(a)\phi(\lambda_g^*)\\
    &=& \tilde{\phi}(a)\phi(\lambda_g)\phi(\lambda_g^*) = \tilde{\phi}(a)\phi(\lambda_g\lambda_g^*)\\
    &=& \tilde{\phi}(a)\phi(\one) = \tilde{\phi}(a). 
\end{eqnarray*}
Thus $\tilde{\phi}|_{\A} \in S_G(\A)$. It follows that $\tilde{\phi}|_{\A}=\omega$, which implies
\[
    (\forall a \in C_c(G,\A)) \quad \tilde{\phi}\left(\sum_g a_g\lambda_g\right)=\sum_g \omega(a_g)\phi(\lambda_g).
\]
($(vii) \implies (viii)$) Suppose $C_r^*(G) \subseteq \A \rtimes_{\alpha,r} G$ is a MASA. Let $a \in \A^G$. Then
\[
    \lambda_ga = \alpha_g(a)\lambda_g = a\lambda_g, ~ g \in G,
\]
which implies
\[
    a \in C_r^*(G)' \cap (\A \rtimes_{\alpha,r} G) = C_r^*(G).
\]
Thus
\[
    a \in \A \cap C_r^*(G) = \bbC\one.
\]
($(viii) \implies (vii)$) Conversely, suppose $\A^G=\bbC\one$. Let
\[
    x \sim \sum_g x_g\lambda_g \in C_r^*(G)' \cap (\A \rtimes_{\alpha,r} G).
\]
By Lemma \ref{rel_commutant_condition},
\[
    \alpha_h(x_g)=x_{hgh^{-1}}=x_g \text{ for all } g, h \in G.
\]
Thus
\[
    x_g \in \A^G = \bbC\one \text{ for all } g \in G.
\]
Since $G$ has the Approximation Property, $x \in C_r^*(G)$ \cite[Proposition 3.4]{Suzuki2017}.
\end{proof}

\begin{example}
Keeping the notation of Theorem \ref{abelian_main}, we show that the implication $(viii) \implies (vi)$ is not true in general.
\end{example}

\begin{proof}
By \cite[Corollary 1]{Elek2021}, there exists a free minimal action $\sigma:\bbZ \curvearrowright X$ on the Cantor set which is not uniquely ergodic. Let $(C(X),\bbZ,\alpha)$ be the corresponding $C^*$-dynamical system. Fix $f \in C(X)$ and suppose $\alpha_k(f)=f$ for all $k \in \bbZ$. Let $x_0 \in X$. Then for any $k \in \bbZ$, $f(\sigma_k(x_0))=f(x_0)$. Since $\sigma:\bbZ \curvearrowright X$ is minimal, $\{\sigma_k(x_0): k \in \bbZ\}$ is dense in $X$, which implies $f=f(x_0)\one$. Thus $C(X)^{\bbZ}=\bbC\one$. But $|S_{\bbZ}(C(X))| \neq 1$, since $\sigma:\bbZ \curvearrowright X$ is not uniquely ergodic.
\end{proof}

\subsubsection{The case of $G$ finite} If $G$ is a group and $g \in G$, then
\[
    C_G(g) = \{h \in G: hg=gh\}
\]
is the \emph{centralizer} of $g$, which is a subgroup of $G$. Of course if $G$ is abelian, then $C_G(g)=G$ for all $g \in G$.

\begin{theorem} \label{finite_main}
Let $(\A,G,\alpha)$ be a $C^*$-dynamical system. If $|G| < \infty$, then the following are equivalent:
\begin{enumerate}
\item[i.] $C_r^*(G) \subseteq \A \rtimes_{\alpha,r} G$ has the PEP;
\item[ii.] $C_r^*(G) \subseteq \A \rtimes_{\alpha,r} G$ has the AEP;
\item[iii.] $C_r^*(G) \subseteq \A \rtimes_{\alpha,r} G$ is aperiodic;
\item[iv.] $C_r^*(G) \subseteq \A \rtimes_{\alpha,r} G$ has a unique pseudo-expectation;
\item[v.] $C_r^*(G) \subseteq \A \rtimes_{\alpha,r} G$ has a unique conditional expectation;
\item[vi.] $|S_{C_G(g)}(\A)|=1$ for all $g \in G$ (i.e., $\alpha:C_G(g) \curvearrowright \A$ is uniquely ergodic for all $g \in G$);
\item[vii.] $C_r^*(G)' \cap (\A \rtimes_{\alpha,r} G) = Z(C_r^*(G))$;
\item[viii.] $\A^{C_G(g)}=\bbC\one$ for all $g \in G$.
\end{enumerate}
In particular, if $\dim(\A)=\infty$, then all of these conditions are false.
\end{theorem}

\begin{proof}
By Theorem \ref{finite_main},
\[
    (i) \iff (ii) \iff (iii) \iff (iv) \iff (v) \iff (vii).
\]
($(vi) \iff (viii)$) For any subgroup $H \subseteq G$, there is a faithful conditional expectation $M_H:\A \to \A^H$ defined by
\[
    M_H(a)=\frac{1}{|H|}\sum_{h \in H} \alpha_h(a), ~ a \in \A.
\]
Then the map
\[
    S_H(\A) \to S(\A^H): \omega \mapsto \omega|_{\A^H}
\]
is a bijection with inverse
\[
    S(\A^H) \to S_H(\A): \phi \mapsto \phi \circ M_H.
\]
Thus
\[
    |S_H(\A)|=1 \iff |S(\A^H)|=1 \iff \A^H=\bbC\one.
\]
($(vii) \implies (viii)$) Let $g \in G$ and $a \in \A^{C_G(g)}$. Define
\[
    x = \sum_{h \in G} \alpha_h(a)\lambda_{hgh^{-1}} \in \A \rtimes_{\alpha,r} G
\]
and note that $x_g=|C_G(g)|a$. For all $k \in G$,
\[
    \lambda_kx = \sum_{h \in G} \alpha_{kh}(a)\lambda_{(kh)g(kh)^{-1}}\lambda_k = x\lambda_k.
\]
Thus
\[
    x \in C_r^*(G)' \cap (\A \rtimes_{\alpha,r} G) = Z(C_r^*(G)) \implies x_g \in \bbC\one
    \implies a \in \bbC\one.
\]
($(viii) \implies (vii)$) Let
\[
    x = \sum_{g \in G} x_g\lambda_g \in C_r^*(G)' \cap (\A \rtimes_{\alpha,r} G).
\]
Fix $g \in G$. By Lemma \ref{rel_commutant_condition},
\[
    \alpha_h(x_g)=x_{hgh^{-1}} \text{ for all } h \in G.
\]
Thus 
\[
    \alpha_h(x_g)=x_g \text{ for all } h \in C_G(g).
\]
That is, $x_g \in \A^{C_G(g)}=\bbC\one$. Since $g$ was arbitrary, $x \in C_r^*(G)$, which implies $x \in Z(C_r^*(G))$.
\end{proof}

\begin{example}
Suppose $G = S_3$ acts on $\A = \bbC \oplus \bbC \oplus \bbC$ by permuting the summands. Then $C_r^*(G) \subseteq \A \rtimes_r G$ does not have a unique conditional expectation, even though $G \curvearrowright \A$ is uniquely ergodic.
\end{example}

\begin{proof}
Clearly $a \oplus b \oplus c \mapsto \frac{a+b+c}{3}$ is the unique $G$-invariant state on $\A$. Now let
\[
    H = C_G((12)) = \{(),(12)\}.
\]
Then
\[
    \A^H = \{a \oplus a \oplus b: a, b \in \bbC\} \neq \bbC\one.
\]
By Theorem \ref{finite_main}, $C_r^*(G) \subseteq \A \rtimes_r G$ does not have a unique conditional expectation. In fact, the inclusion $C_r^*(G) \subseteq \A \rtimes_r G$ is isomorphic to the inclusion
\[
    \bbC \oplus \bbM_2 \oplus \bbC \to \bbM_3 \oplus \bbM_3: a \oplus B \oplus c \mapsto (a \oplus B) \oplus (B \oplus c).
\]
There are clearly multiple conditional expectations $\bbM_3 \oplus \bbM_3 \to \bbC \oplus \bbM_2 \oplus \bbC$, for example
\[
    E_1(V \oplus W) = v_{11} \oplus \begin{bmatrix} v_{22} & v_{23}\\ v_{32} & v_{33} \end{bmatrix} \oplus w_{33}
\]
and
\[
    E_2(V \oplus W) = v_{11} \oplus \begin{bmatrix} w_{11} & w_{12}\\ w_{21} & w_{22} \end{bmatrix} \oplus w_{33}.
\]
\end{proof}

\subsubsection{The general case}

\phantom{}\\

Recall from Section \ref{inj} that $I_G(\bbC)=C(\partial_FG)$, where $\partial_FG$ is the Furstenberg boundary of $G$. Thus for any $C^*$-dynamical system $(\A,G,\alpha)$, there exists at least one $G$-equivariant ucp map $\phi:\A \to C(\partial_FG)$.

\begin{theorem} \label{main1}
Let $(\A,G,\alpha)$ be a $C^*$-dynamical system. Consider the statements:
\begin{enumerate}
\item[i.] For each $g \in G$, there exists a unique $C_G(g)$-equivariant ucp map $\phi_g:\A \to C(\partial_FG)$;
\item[ii.] $C_r^*(G) \subseteq \A \rtimes_{\alpha,r} G$ has a unique pseudo-expectation;
\item[iii.] There exists a unique $G$-equivariant ucp map $\phi:\A \to C(\partial_FG)$.
\end{enumerate}
Then
\[
    (i) \implies (ii) \implies (iii).
\]
\end{theorem}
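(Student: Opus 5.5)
Both implications go through the sandwich $C(\partial_FG) \rtimes_{\sigma,r} G \subseteq I(C_r^*(G)) \subseteq C(\partial_FG) \overline{\rtimes}_\sigma G$ from Section \ref{inj}, together with the Fourier-coefficient calculus in the monotone complete crossed product.

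For (ii) $\implies$ (iii), I argue by contraposition. Suppose $\phi \neq \psi$ are $G$-equivariant ucp maps $\A \to C(\partial_FG)$. By Lemma \ref{equivariant_extension} (with $\B = C(\partial_FG)$, $\beta=\sigma$), they extend to ucp maps
\[
\phi \rtimes G,\ \psi \rtimes G : \A \rtimes_{\alpha,r} G \to C(\partial_FG) \rtimes_{\sigma,r} G \subseteq I(C_r^*(G)).
\]
Each of these maps $\lambda_g \mapsto \lambda_g$, so restricts to the identity on $C_r^*(G)$ and is therefore a pseudo-expectation. Since the restrictions to $\A = \A\lambda_e$ are $\phi$ and $\psi$, the two pseudo-expectations differ. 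One point needs checking: the inclusion $C(\partial_FG)\rtimes_{\sigma,r} G \subseteq I(C_r^*(G))$ must be compatible with the canonical copy of $C_r^*(G)$ in $I(C_r^*(G))$. This follows from Hamana's theorem quoted in Section \ref{inj}.

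For (i) $\implies$ (ii), let $\Psi : \A \rtimes_{\alpha,r} G \to I(C_r^*(G)) \subseteq C(\partial_FG)\overline{\rtimes}_\sigma G$ be any pseudo-expectation, viewed as a ucp map into the latter (as operator systems). Since $\Psi$ fixes $C_r^*(G)$, and multiplicative domain arguments apply (Choi applied in the injective $C^*$-algebra $I(C_r^*(G))$, whose product agrees with the monotone crossed product's on $C(\partial_FG)\rtimes_r G \supseteq C_r^*(G)$), we get $\Psi(\lambda_h x \lambda_k)=\lambda_h\Psi(x)\lambda_k$. For each $g$, define
\[
\phi_g(a) := \overline{\bbE}\bigl(\Psi(a)\lambda_g^*\bigr), \quad a \in \A,
\]
the $g$-th Fourier coefficient of $\Psi(a)$. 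Then $\Psi(a\lambda_g)=\Psi(a)\lambda_g$ has Fourier coefficients $\Psi(a)_{k g^{-1}}$, so $\Psi$ is determined on $C_c(G,\A)$, hence everywhere, by the maps $a \mapsto \Psi(a)_k$.

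Equivariance gives $\Psi(\alpha_h(a))=\lambda_h\Psi(a)\lambda_h^*$, so $\Psi(\alpha_h(a))_{hkh^{-1}}=\sigma_h(\Psi(a)_k)$. In particular, $a\mapsto \Psi(a)_k$ is $C_G(k)$-equivariant. For $k=e$ this map is ucp. For $k\neq e$ it is merely completely bounded, and there is a choice. One option is to decompose it via Lemma \ref{equiv_cb_decomp} into four $C_G(k)$-equivariant cp maps, normalize each via Lemma \ref{equiv_cp_decomp}, and compare with the uniqueness in (i). I would try instead the cleaner route: since the unique $C_G(k)$-equivariant ucp map $\phi_k$ exists, show that $a\mapsto\Psi(a)_k$ equals $c_k\,\phi_k(a)$ for a constant (determined by $\Psi(1)_k=\delta_{k,e}$). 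Forcing this is exactly where Lemmas \ref{equiv_cb_decomp} and \ref{equiv_cp_decomp} enter. Note that $C(\partial_FG)$ is $C_G(k)$-injective, since it is $G$-injective and the restriction of a $G$-injective object to a subgroup stays injective, which can be checked via the $\ell^\infty(G,\cdot)$ characterization in \cite[Remark 2.3]{Hamana1985}. Each equivariant cp piece is then $\phi_i(1)^{1/2}\phi_k\,\phi_i(1)^{1/2}$, where $\phi_i(1)\in C(\partial_FG)$ is commutative. So the coefficient map has the form $f\cdot\phi_k(a)$ for some $f \in C(\partial_FG)$, and evaluating at $a=1$ gives $f=\Psi(1)_k=\delta_{k,e}$. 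Hence $\Psi(a)=\phi_e(a)$, $\Psi$ is uniquely determined, and (ii) follows.

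The main obstacle is this last step. Verifying $C_G(k)$-injectivity of $C(\partial_FG)$ is one part. The other, more delicate part is ensuring that the product used in Choi's bimodule argument and in the Fourier calculus is legitimate, given that $I(C_r^*(G))$ sits in $C(\partial_FG)\overline{\rtimes}G$ only as an operator system. My plan there is to use only products with elements of $C(\partial_FG)\rtimes_r G$, where the two products agree.
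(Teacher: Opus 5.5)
Your proposal is correct and follows essentially the paper's proof. For (ii)$\Rightarrow$(iii) you extend $\phi$ to $\phi\rtimes G$ into $C(\partial_FG)\rtimes_{\sigma,r}G\subseteq I(C_r^*(G))$. For (i)$\Rightarrow$(ii) you use the $C_G(g)$-equivariant Fourier-coefficient maps $a\mapsto\overline{\bbE}(\Psi(a)\lambda_g^*)$, which Lemmas \ref{equiv_cb_decomp} and \ref{equiv_cp_decomp} force to equal $\delta_{g,e}\phi_g$. The differences are minor: the paper cites \cite[Lemma 7.5]{BussEchterhoffWillett2020} for $C_G(g)$-injectivity, where your restriction argument via the $\ell^\infty(G,\cdot)$ characterization also works, and it avoids your product worry by composing $\Psi$ with the operator-system inclusion into $C(\partial_FG)\overline{\rtimes}_\sigma G$ and applying Choi's multiplicative-domain theorem there, since the unitaries $\lambda_g$ are fixed by that ucp map.
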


\begin{proof}
((i) $\implies$ (ii)) As discussed in Section \ref{inj},
\[
    C(\partial_FG) \rtimes_{\sigma,r} G \subseteq I(C_r^*(G)) \subseteq C(\partial_FG) \overline{\rtimes}_\sigma G,
\]
where $\sigma:G \curvearrowright C(\partial_FG)$ is the canonical action. Suppose $\Psi:\A \rtimes_{\alpha,r} G \to C(\partial_FG) \overline{\rtimes}_\sigma G$ is a ucp map such that $\Psi|_{C_r^*(G)}=\id$. Then for all $a \in \A$,
\[
    \Psi(a) \sim \sum_g \psi_g(a)\lambda_g,
\]
where $\psi_g:\A \to C(\partial_FG)$ is defined by
\[
    \psi_g(a) = \overline{\bbE}(\Psi(a)\lambda_g^*), ~ a \in \A, 
\]
and $\overline{\bbE}:C(\partial_FG) \overline{\rtimes}_\sigma G \to C(\partial_FG)$ is the canonical faithful $G$-equivariant conditional expectation. Fix $g \in G$ and let $h \in C_G(g)$. Then
\begin{eqnarray*}
    \psi_g(\alpha_h(a))
    &=& \overline{\bbE}(\Psi(\alpha_h(a))\lambda_g^*)\\
    &=& \overline{\bbE}(\Psi(\lambda_ha\lambda_h^*)\lambda_g^*)\\
    &=& \overline{\bbE}(\lambda_h\Psi(a)\lambda_h^*\lambda_g^*)\\
    &=& \overline{\bbE}(\lambda_h\Psi(a)\lambda_g^*\lambda_h^*)\\
    &=& \sigma_h\left(\overline{\bbE}(\Psi(a)\lambda_g^*)\right)\\
    &=& \sigma_h(\psi_g(a)).
\end{eqnarray*}
That is, $\psi_g:\A \to C(\partial_FG)$ is $C_G(g)$-equivariant. Now $C(\partial_FG)$ is $C_G(g)$-injective by \cite[Lemma 7.5]{BussEchterhoffWillett2020}. Thus Lemmas \ref{equiv_cb_decomp} and \ref{equiv_cp_decomp} imply that $\psi_g=\psi_g(1)\phi_g$. Since
\[
    \psi_g(1) = \overline{\bbE}(\Psi(1)\lambda_g^*) = \overline{\bbE}(\lambda_g^*) = \begin{cases} 1, & g=e\\ 0, & g \neq e \end{cases},
\]
we have that $\Psi(a)=\phi_e(a)$ for all $a \in \A$. Therefore
\[
    \Psi\left(\sum_g a_g\lambda_g\right) = \sum_g \Psi(a_g)\lambda_g = \sum_g \phi_e(a_g)\lambda_g, ~ a \in C_c(G,\A).
\]

($(ii) \implies (iii)$) Let $\phi:\A \to C(\partial_FG)$ be a $G$-equivariant ucp map. By Lemma \ref{equivariant_extension}, there exists a ucp map
\[
    \Phi:\A \rtimes_{\alpha,r} G \to C(\partial_FG) \rtimes_{\sigma,r} G
\]
such that
\[
    \Phi\left(\sum_g a_g\lambda_g\right) = \sum_g \phi(a_g)\lambda_g, ~ a \in C_c(G,\A).
\]
Since $C(\partial_FG) \rtimes_{\sigma,r} G \subseteq I(C_r^*(G))$, we see that $\Phi$ is a pseudo-expectation for $C_r^*(G) \subseteq \A \rtimes_{\alpha,r} G$. Because $\Phi$ is uniquely determined, so is $\phi$. 
\end{proof}

\begin{remark} \label{too_strong}
In general, condition (i) of Theorem \ref{main1} is much too strong. Indeed, as explained in Section \ref{inj},
\[
    C_r^*(G) \subseteq C(\partial_FG) \rtimes_{\sigma,r} G \subseteq I(C_r^*(G)).
\]
Since $C_r^*(G) \subseteq I(C_r^*(G))$ has a unique pseudo-expectation, so does $C_r^*(G) \subseteq C(\partial_FG) \rtimes_{\sigma,r} G$. In other words, $C_r^*(G) \subseteq C(\partial_FG) \rtimes_{\sigma,r} G$ satisfies condition (ii) of Theorem \ref{main1}. Now assume that $C_r^*(G) \subseteq C(\partial_FG) \rtimes_{\sigma,r} G$ also satisfies condition (i) of Theorem \ref{main1}. Then for each $g \in G$, $\id:C(\partial_FG) \to C(\partial_FG)$ is the only $C_G(g)$-equivariant ucp map, which implies $I_{C_G(g)}(\bbC)=C(\partial_FG)$. But if $G=\bbF_2$ (the free group on two generators), then $C(\partial_FG) \neq \bbC$ since $G$ is not amenable, while $I_{C_G(g)}(\bbC)=\bbC$ for all $e \neq g \in G$ since $C_G(g) \cong \bbZ$.
\end{remark}

\begin{corollary} \label{trivial_fixed}
Let $(\A,G,\alpha)$ be a $C^*$-dynamical system. If $C_r^*(G) \subseteq \A \rtimes_{\alpha,r} G$ has a unique pseudo-expectation, then $\A^G=\bbC1$. 
\end{corollary}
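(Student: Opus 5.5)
We will derive this from the implication (ii) $\implies$ (iii) of Theorem \ref{main1}. Assuming $C_r^*(G) \subseteq \A \rtimes_{\alpha,r} G$ has a unique pseudo-expectation, there is a unique $G$-equivariant ucp map $\A \to C(\partial_FG)$. Our aim is to show that if $\A^G \neq \bbC1$, then we can construct two distinct $G$-equivariant ucp maps $\A \to C(\partial_FG)$, contradicting this uniqueness.

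Suppose $\A^G \neq \bbC1$. The first step is to find two distinct states on $\A^G$. Since $\A^G$ is a unital $C^*$-algebra different from $\bbC1$, it has at least two distinct states $\omega_1 \neq \omega_2$; indeed, a unital $C^*$-algebra with a unique state is $\bbC1$. Next we extend these to $\A$. We cannot simply average, since $G$ need not be amenable and there may be no conditional expectation $\A \to \A^G$. Instead we use $G$-injectivity of $C(\partial_FG)$ (see Section \ref{inj}). Regard $\A^G$ as a $G$-operator system with the trivial action. Each map $\A^G \to C(\partial_FG)$, $b \mapsto \omega_i(b)1$, is ucp and $G$-equivariant, because $\sigma$ fixes constants and $\alpha$ fixes $\A^G$. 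The inclusion $\A^G \hookrightarrow \A$ is a $G$-equivariant complete isometry. By $G$-injectivity we obtain $G$-equivariant ucp maps $\phi_i:\A \to C(\partial_FG)$ with $\phi_i(b)=\omega_i(b)1$ for $b \in \A^G$.

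The two extensions differ on some $b \in \A^G$ with $\omega_1(b)\neq\omega_2(b)$, so $\phi_1 \neq \phi_2$. This contradicts condition (iii) of Theorem \ref{main1}, and hence $\A^G=\bbC1$.

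The only delicate point is the extension step, namely that $G$-injectivity permits extension of equivariant ucp maps from arbitrary $G$-operator subsystems. This is exactly the defining property of $G$-injectivity in Hamana's category, so we expect no real obstacle there.
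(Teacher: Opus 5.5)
Your proposal is correct and follows essentially the same route as the paper. Both arguments view a state on $\A^G$ as a trivially $G$-equivariant map into $C(\partial_FG)$, extend it by $G$-injectivity to a $G$-equivariant ucp map $\A \to C(\partial_FG)$, and use the uniqueness in Theorem \ref{main1}(iii) to conclude that $\A^G$ has only one state; the only difference is that you argue by contradiction while the paper argues directly.
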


\begin{proof}
Let $\phi \in S(\A^G)$. Since $\phi$ is (trivially) $G$-equivariant, there exists a $G$-equivariant ucp map $\tilde{\phi}:\A \to C(\partial_FG)$ such that $\tilde{\phi}|_{\A^G}=\phi$. By Theorem \ref{main1}, $\tilde{\phi}$ is uniquely determined. Thus $\phi$ is uniquely determined, which implies $\A^G=\bbC1$.
\end{proof}

If $G$ is amenable, then $C(\partial_FG)=\bbC$. Thus Theorem \ref{main1} has the following immediate consequence.

\begin{corollary}
Let $(\A,G,\alpha)$ be a $C^*$-dynamical system, with $G$ \ul{amenable}. Consider the statements:
\begin{enumerate}
\item[i.] For each $g \in G$, the restricted action $C_g(G) \curvearrowright \A$ is uniquely ergodic;
\item[ii.] $C_r^*(G) \subseteq \A \rtimes_{\alpha,r} G$ has a unique pseudo-expectation;
\item[iii.] The full action $G \curvearrowright \A$ is uniquely ergodic.
\end{enumerate}
Then
\[
    (i) \implies (ii) \implies (iii).
\]
\end{corollary}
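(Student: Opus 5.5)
This is a direct specialization of Theorem \ref{main1}, so the plan is to substitute $C(\partial_FG)=\bbC$ and translate the three conditions. When $G$ is amenable, $I_G(\bbC)=\bbC$ \cite[Remark 3.8]{Hamana1985}, and hence $C(\partial_FG)=I_G(\bbC)=\bbC$, carrying the trivial action $\sigma$. The dictionary I will use is the following. For any subgroup $H \subseteq G$, an $H$-equivariant ucp map $\A \to \bbC$ is exactly a state $\omega$ on $\A$ with $\omega \circ \alpha_h = \omega$ for all $h \in H$, that is, an element of $S_H(\A)$. Consequently, ``there exists a unique $H$-equivariant ucp map $\A \to C(\partial_FG)$'' means $|S_H(\A)|=1$, which is unique ergodicity of the restricted action $H \curvearrowright \A$.

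First, I will apply this dictionary with $H=C_G(g)$ for each $g \in G$. Under it, condition (i) of Theorem \ref{main1} becomes condition (i) of the corollary. (The statement writes $C_g(G)$, which I read as the centralizer $C_G(g)$.)

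Second, I will apply it with $H=G$. Then condition (iii) of Theorem \ref{main1} becomes unique ergodicity of the full action, which is condition (iii) of the corollary.

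Condition (ii) is literally the same in both results. Hence the chain $(i) \implies (ii) \implies (iii)$ transfers verbatim from Theorem \ref{main1}.

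There is essentially no obstacle. The only point to verify is that the identification $C(\partial_FG)=\bbC$ is compatible with the actions, i.e. that $\sigma$ is trivial. This holds because $\bbC$ admits only the trivial action. With that, equivariance of a map into $\bbC$ reduces exactly to invariance of the corresponding state.
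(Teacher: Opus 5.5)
Your proposal is correct and is the paper's own argument: the corollary is recorded as an immediate consequence of Theorem \ref{main1} once one substitutes $C(\partial_FG)=\bbC$ for amenable $G$, so that $C_G(g)$- and $G$-equivariant ucp maps $\A \to \bbC$ are exactly the invariant states. You are also right to read $C_g(G)$ in the statement as the centralizer $C_G(g)$.
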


\begin{remark} \label{C*-simple}
Among non-amenable groups, it would be particularly interesting to see how Theorem \ref{main1} specializes to $C^*$-simple groups, i.e., those groups $G$ for which $C_r^*(G)$ is simple ($\bbF_2$, for example). As noted in Remark \ref{too_strong} above, condition (i) of Theorem \ref{main1} is much too strong in that case. Thanks to \cite{KalantarKennedy2017} and \cite{Kennedy2020}, the reader who wishes to pursue this direction would have several reformulations of $C^*$-simplicity to choose from.
\end{remark}

\subsection{The Inclusion $\A^G \subseteq \A$}

\phantom{}\\

Let $(\A,G,\alpha)$ be a $C^*$-dynamical system. In this section we analyze the unique pseudo-expectation property for the $C^*$-inclusion $\A^G \subseteq \A$. In order to make the problem more tractable, we restrict our attention to the situation $|G| < \infty$. In that case, there is a faithful conditional expectation $M_G:\A \to \A^G$ given by
\[
    M_G(a) = \frac{1}{|G|}\sum_g \alpha_g(a), ~ a \in \A,
\]
which will be the unique pseudo-expectation when $\A^G \subseteq \A$ has the unique pseudo-expectation property.\\

\subsubsection{Preliminaries}

A key technical tool in our analysis is the fact that $\A^G \subseteq \A$ is isomorphic to a corner of $\A \rtimes_{\alpha,r} G \subseteq \A \otimes B(\ell^2(G))$, and that the latter $C^*$-inclusion always has finite Watatani index. We explain both of these statements presently.\\

Recall from Section \ref{notation} that there exists an embedding $\iota:\A \rtimes_{\alpha,r} G \to \A \otimes B(\ell^2(G))$ such that
\[
    \iota(a) = \sum_g (\alpha_g^{-1}(a) \otimes E_{g,g}), ~ a \in \A,
\]
and
\[
    \iota(\lambda_g) = 1 \otimes \lambda_g = \sum_h (1 \otimes E_{gh,h}), ~ g \in G.
\]
In particular, $\iota(\A) \neq \A \otimes I$ unless $\alpha:G \curvearrowright \A$ is trivial. In fact, $\iota(\A) \cap (\A \otimes I) = \A^G \otimes I = \iota(\A^G)$. To simplify notation, we will typically identify $\A \rtimes_{\alpha,r} G$ with $\iota(\A \rtimes_{\alpha,r} G) \subseteq \A \otimes B(\ell^2(G))$. 

\begin{lemma} \label{fin_ind_CE}
Let $\alpha:G \curvearrowright \A$ be the action of a finite group on a unital $C^*$-algebra. Then there is a conditional expectation $\E:\A \otimes B(\ell^2(G)) \to \A \rtimes_{\alpha,r} G$ such that $\Ind(\E)=|G|(1 \otimes I)$.
\end{lemma}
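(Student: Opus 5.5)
I would construct $\E$ explicitly, by averaging over the dual symmetry, and then exhibit a quasi-basis directly. Identify $\A \otimes B(\ell^2(G))$ with $G\times G$ matrices $[a_{s,t}]$ with entries in $\A$, so that $a_{s,t}$ is the coefficient of $E_{s,t}$. In this picture $\iota(a)$ is diagonal with entries $\alpha_s^{-1}(a)$, and $1\otimes\lambda_g$ has $1$'s in positions $(gh,h)$. A general element $\sum_g a_g\lambda_g$ of $\A\rtimes_{\alpha,r}G$ is therefore the matrix
\[
x_{s,t}=\alpha_s^{-1}(a_{st^{-1}}).
\]
The characterizing property is that $\alpha_k^{-1}(x_{s,t})$ depends only on $(ks,kt)$, that is,
\[
x_{ks,kt}=\alpha_k^{-1}(x_{s,t})\quad\text{for all } k.
\]
Indeed, a matrix satisfying this relation is recovered by setting $a_g=x_{e,g^{-1}}$. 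Since $|G|<\infty$, I would define
\[
\E([y_{s,t}])_{s,t}=\frac{1}{|G|}\sum_{k}\alpha_{s}^{-1}\alpha_{k}(y_{k^{-1}s,k^{-1}t}).
\]
Equivalently, $\E=\frac{1}{|G|}\sum_k \theta_k$, where $\theta_k=(\alpha_k^{-1}\otimes \Ad\rho_k)$ (suitably twisted) is an action of $G$ on $\A\otimes B(\ell^2(G))$ whose fixed-point algebra is exactly $\A\rtimes_{\alpha,r}G$. The cleanest route is to check that $\theta_k(y)=(\alpha_k\otimes\id)$-type conjugation combined with $\rho_k$ defines automorphisms and that the fixed points are precisely the matrices satisfying the relation above. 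Averaging over this action then gives a conditional expectation (ucp, idempotent, bimodular), exactly as $M_G$ does for $\A^G\subseteq\A$.

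For the quasi-basis I would take $u_r=1\otimes E_{r,r}$ and $v_r=|G|(1\otimes E_{r,r})$ for $r\in G$. Note that $1\otimes E_{r,r}$ is not in the crossed product, but it is $\A$-diagonal. The key computation is
\[
\E(E_{r,r}\,y)_{s,t}=\frac{1}{|G|}\,\alpha_s^{-1}\alpha_{sr^{-1}}\big(y_{r,\,rs^{-1}t}\big).
\]
This holds because only $k=sr^{-1}$ contributes. Then
\[
\sum_r (E_{r,r}\E(E_{r,r} y))_{r,t}\cdot|G| = y_{r,t},
\]
since for $s=r$ the twisting automorphism is $\alpha_r^{-1}\alpha_e=\alpha_r^{-1}$. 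I expect this needs a small correction: the automorphism should cancel, which will force the precise choice of the twisting in $\theta_k$. So the main obstacle is pinning down conventions so that the diagonal entries of $\E(E_{r,r}y)$ in row $r$ reproduce $y$ untwisted. If the twist does not cancel, I would instead use $u_r=\iota$-images of $1\otimes E_{r,e}$-type partial isometries, for example $u_r=1\otimes E_{r,e}$ and $v_r=|G|(1\otimes E_{e,r})$. This gives
\[
\sum_r u_r\E(v_r y)=y
\]
once $\E$ compresses correctly. The symmetric identity $\sum_r \E(yu_r)v_r=y$ would follow by taking adjoints.

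Finally,
\[
\Ind(\E)=\sum_r u_rv_r=|G|\sum_r 1\otimes E_{r,r}=|G|(1\otimes I),
\]
which is consistent with Theorem \ref{fin_ind_facts}(i), since this lies in the center $Z(\A)\otimes I$.
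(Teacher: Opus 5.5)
Your overall strategy is sound, but as written the central step fails. You want to realize $\A\rtimes_{\alpha,r}G$ as the fixed-point algebra of a $G$-action on $\A\otimes B(\ell^2(G))$ and then average. The step you left as ``suitably twisted'' is the one that carries the proof, and the formulas you actually wrote down are wrong.

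The entry formula $x_{s,t}=\alpha_s^{-1}(a_{st^{-1}})$ is correct. However, the invariance is under \emph{right} translations, $x_{sk,tk}=\alpha_k^{-1}(x_{s,t})$, not left ones. For $x=\iota(a)$ one has $x_{ks,ks}=\alpha_{s^{-1}k^{-1}}(a)$, whereas $\alpha_k^{-1}(x_{s,s})=\alpha_{k^{-1}s^{-1}}(a)$, and these differ for non-abelian $G$. The map $k\mapsto\alpha_k^{-1}\otimes\Ad\rho_k$ is in general not even an action when $G$ is non-abelian.

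Your displayed $\E$ also fails to fix the crossed product, even for $G=\bbZ_2=\{e,g\}$ with a nontrivial action. It sends $\iota(a)$ to $a\otimes I$, which lies in $\A\rtimes_{\alpha,r}G$ only when $a\in\A^G$. So your $\E$ is not a conditional expectation onto $\A\rtimes_{\alpha,r}G$. The uncancelled twist $\alpha_r^{-1}$ you found in the quasi-basis computation is a symptom of this wrong $\E$, not a problem with the quasi-basis.

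The correct action is $\theta_k=\alpha_k\otimes\Ad\rho_k$. Since $\Ad\rho_k(E_{s,t})=E_{sk^{-1},tk^{-1}}$, this reads $\theta_k(x)_{s,t}=\alpha_k(x_{sk,tk})$. It is an action by $*$-automorphisms whose fixed points are exactly the matrices with $x_{sk,tk}=\alpha_k^{-1}(x_{s,t})$. These are exactly the elements of $\A\rtimes_{\alpha,r}G$, since such an $x$ satisfies $x_{s,t}=\alpha_s^{-1}(x_{e,ts^{-1}})$. The average is
\[
\E(x)_{s,t}=\frac{1}{|G|}\sum_k\alpha_k(x_{sk,tk}),
\]
which is precisely the paper's $\E$ written entrywise.

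With this $\E$ one gets $\E((1\otimes E_{r,r})y)_{s,t}=\frac{1}{|G|}\alpha_{s^{-1}r}(y_{r,ts^{-1}r})$. The twist is therefore trivial at $s=r$, and your first choice $\{(1\otimes E_{r,r},|G|(1\otimes E_{r,r}))\}_{r\in G}$ is already a quasi-basis. Your fallback $\{(1\otimes E_{r,e},|G|(1\otimes E_{e,r}))\}$ is the paper's quasi-basis up to rescaling by $|G|^{1/2}$. The adjoint argument for the second identity and the index computation are fine.

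Once repaired, your route differs from the paper's only in how $\E$ is shown to be a conditional expectation. The paper checks $\E|_{\A\rtimes_{\alpha,r}G}=\id$ by hand, proves positivity on matrices $[a_g^*a_h]$, deduces contractivity, and invokes Tomiyama's theorem. Averaging a finite group of automorphisms instead gives complete positivity, bimodularity over the fixed points, and faithfulness for free.
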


\begin{proof}
Define a linear map $\E:\A \otimes B(\ell^2(G)) \to \A \rtimes_{\alpha,r} G$ by
\[
    \E\left(\begin{bmatrix} x_{g,h} \end{bmatrix}\right) = \sum_g\left(\frac{1}{|G|}\sum_h \alpha_h(x_{h,g^{-1}h})\right)\lambda_g.
\]
It is easy to check that for all $a \in \A$ and $g, h \in G$,
\[
    \E(a \otimes E_{g,h}) = \frac{1}{|G|}\alpha_g(a)\lambda_{gh^{-1}}.
\]
Thus
\begin{eqnarray*}
    \E(a\lambda_g)
    &=& \E\left(\sum_h (\alpha_h^{-1}(a) \otimes E_{h,g^{-1}h})\right) = \sum_h \E(\alpha_h^{-1}(a) \otimes E_{h,g^{-1}h})\\
    &=& \sum_h \frac{1}{|G|}\alpha_h(\alpha_h^{-1}(a))\lambda_{h(g^{-1}h)^{-1}} = \sum_h \frac{1}{|G|}a\lambda_g = a\lambda_g,
\end{eqnarray*}
which shows that $\E|_{\A \rtimes_{\alpha,r} G}=\id$. To show that $\E$ is positive, it suffices to observe that
\[
    \E\left(\begin{bmatrix} a_g^*a_h \end{bmatrix}\right)
    = \sum_g\left(\frac{1}{|G|}\sum_h \alpha_h(a_h^*a_{g^{-1}h})\right)\lambda_g
    = \frac{1}{|G|}\left(\sum_g a_g\lambda_g^*\right)^*\left(\sum_h a_h\lambda_h^*\right) \geq 0
\]
\cite[Lemma 3.13]{PaulsenBook}. Then $\E$ is a contraction \cite[Corollary 2.9]{PaulsenBook}, therefore a conditional expectation \cite[Theorem II.6.10.2]{BlackadarBook}. Finally, set $U_g = |G|^{1/2}(1 \otimes E_{g,e}) \in \A \otimes B(\ell^2(G))$ for all $g \in G$. We claim that $\{(U_g,U_g^*): g \in G\}$ is a quasi-basis for $\E$. Indeed, 
\begin{eqnarray*}
    \sum_k \E((a \otimes E_{g,h})U_k)U_k^*
    &=& |G|\sum_k \E((a \otimes E_{g,h})(1 \otimes E_{k,e}))(1 \otimes E_{e,k})\\
    &=& |G|\E(a \otimes E_{g,e})(1 \otimes E_{e,h}) = \alpha_g(a)\lambda_g(1 \otimes E_{e,h})\\
    &=& \sum_k (\alpha_k^{-1}(\alpha_g(a)) \otimes E_{k,g^{-1}k})(1 \otimes E_{e,h}) = a \otimes E_{g,h}.
\end{eqnarray*}
Thus
\[
    \Ind(\E) = \sum_g U_gU_g^* = |G|\sum_g (1 \otimes E_{g,g}) = |G|(1 \otimes I).
\] 
\end{proof}

\begin{lemma}[\cite{Rosenberg1979}] \label{corner_iso}
Let $\alpha:G \curvearrowright \A$ be an action of a finite group on a unital $C^*$-algebra and set $p = \frac{1}{|G|}\sum_g \lambda_g \in \Proj(\A \rtimes_{\alpha,r} G)$. Then the following statements hold:
\begin{enumerate}
\item[i.] $p = 1 \otimes P$, where
\[
    P = \frac{1}{|G|}\begin{bmatrix} 1 & 1 & ... & 1\\ 1 & 1 & ... & 1\\ \vdots & \vdots & \ddots & \vdots\\ 1 & 1 & ... & 1 \end{bmatrix} \in \Proj(\B(\ell^2(G))).
\]
\item[ii.] The map $\pi:\A \to p(\A \otimes B(\ell^2(G)))p:a \mapsto a \otimes P$ is a $*$-isomorphism such that $\pi(\A^G) = p(\A \rtimes_{\alpha,r} G)p$.
\item[iii.] For all $a \in \A$, $\pi(M_G(a)) = \E(\pi(a))$, where $M_G:\A \to \A^G$ and $\E:\A \otimes B(\ell^2(G)) \to \A \rtimes_{\alpha,r} G$ are the canonical conditional expectations.
\end{enumerate}
\end{lemma}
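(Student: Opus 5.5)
The plan is to verify the three statements by direct matrix computations inside $\A \otimes B(\ell^2(G))$, using the embedding $\iota$ fixed above.

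For (i), I will expand $\lambda_g = 1 \otimes \sum_h E_{gh,h}$ and sum over $g$. This gives
\[
    \sum_g \lambda_g = 1 \otimes \sum_{g,h} E_{gh,h} = 1 \otimes \sum_{k,h} E_{k,h},
\]
which is $|G|$ times $1 \otimes P$. Since $P$ is the rank-one projection onto the unit vector $|G|^{-1/2}\sum_g \delta_g$, $p$ is a projection.

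For (ii), I first note that $p(\A \otimes B(\ell^2(G)))p = \A \otimes PB(\ell^2(G))P = \A \otimes \bbC P$. Hence $a \mapsto a \otimes P$ is a $*$-isomorphism of $\A$ onto this corner. To identify $\pi(\A^G)$, I compute $p(a\lambda_g)p$ for $a \in \A$ and $g \in G$. Because $\lambda_g p = p$, this reduces to $pap$, where $a$ means $\sum_h \alpha_h^{-1}(a) \otimes E_{h,h}$. A direct matrix computation gives
\[
    pap = \frac{1}{|G|}\sum_h \alpha_h^{-1}(a) \otimes P = M_G(a) \otimes P = \pi(M_G(a)).
\]
By linearity, $p(\A \rtimes_{\alpha,r} G)p = \pi(M_G(\A)) = \pi(\A^G)$. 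Here I use that the crossed product equals $C_c(G,\A)$ because $G$ is finite, and that $M_G$ is onto $\A^G$.

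For (iii), I will use the formula $\E(a \otimes E_{g,h}) = \frac{1}{|G|}\alpha_g(a)\lambda_{gh^{-1}}$ established in Lemma \ref{fin_ind_CE}. Summing over all $g,h$ with the factor $1/|G|$ gives
\[
    \E(a \otimes P) = \frac{1}{|G|^2}\sum_{g,h}\alpha_g(a)\lambda_{gh^{-1}} = \frac{1}{|G|}\sum_g \alpha_g(a)\cdot\frac{1}{|G|}\sum_k \lambda_k = M_G(a)\,p.
\]
It remains to identify $M_G(a)p$ with $\pi(M_G(a))$. Since $b = M_G(a)$ is $G$-fixed, it embeds as $b \otimes I$. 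Then $bp = b \otimes P = \pi(b)$, which proves (iii).

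The main point of care is the index bookkeeping in (iii): the conventions for $\alpha_h^{-1}$ versus $\alpha_{h^{-1}}$ and for $gh^{-1}$ must be tracked consistently. The cleanest route is to rely on the elementwise formula for $\E$ from Lemma \ref{fin_ind_CE} rather than recomputing from the matrix definition of $\E$. I expect no genuine conceptual obstacle.
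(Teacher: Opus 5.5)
Your proposal is correct and follows essentially the same route as the paper: the direct computation $p = 1 \otimes P$, the identity $p\left(\sum_g a_g\lambda_g\right)p = M_G\left(\sum_g a_g\right) \otimes P$, and the evaluation of $\E(a \otimes P)$ via the elementwise formula $\E(a \otimes E_{g,h}) = \frac{1}{|G|}\alpha_g(a)\lambda_{gh^{-1}}$ from Lemma \ref{fin_ind_CE}. You also supply a bit more of the detail that the paper leaves as ``simple calculations'': the identity $\lambda_g p = p$, which reduces $p(a\lambda_g)p$ to $pap$, and the fact that $p(\A \otimes B(\ell^2(G)))p = \A \otimes \bbC P$, which gives surjectivity of $\pi$.
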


\begin{proof}
Simple calculations show that $p = 1 \otimes P$ and that $\pi$ is a $*$-isomorphism. To see that $\pi(\A^G)=p(\A \rtimes_{\alpha,r} G)p$, note that
\[
    p\left(\sum_g a_g\lambda_g\right)p = M_G\left(\sum_g a_g\right)p = M_G\left(\sum_g a_g\right) \otimes P.
\]
Lastly,
\[
    \E(a \otimes P) = \frac{1}{|G|}\sum_g\sum_h \E(a \otimes E_{g,h})
    = \frac{1}{|G|}\sum_g\left(\frac{1}{|G|}\sum_h \alpha_g(a)\lambda_{gh^{-1}}\right)
    = M_G(a) \otimes P,
\]
which shows that $\pi \circ M_G = \E \circ \pi$.
\end{proof}

If $p$ was always a full projection, then studying the pseudo-expectations of $\A^G \subseteq \A$ would be equivalent to studying the pseudo-expectations of $\A \rtimes_{\alpha,r} G \subseteq \A \otimes B(\ell^2(G))$ (see Theorem \ref{PZ} above). But the situation is more complicated than that, so additional care will be needed.

\begin{definition}[{\cite[Definition 7.1.4 and Lemma 7.1.9]{PhillipsBook}}]
We say that an action $\alpha:G \curvearrowright \A$ of a finite group on a unital $C^*$-algebra is \emph{saturated} if there exist $u_1, u_2, ..., u_n, v_1, v_2, ..., v_n \in \A$ such that
\[
    \sum_j u_j\alpha_g(v_j) = \begin{cases} 1, & g=e\\ 0, & g \neq e \end{cases}.
\]
\end{definition}

\begin{theorem}[\cite{JeongPark2008}] \label{saturated}
Let $\alpha:G \curvearrowright \A$ be an action of a finite group on a unital $C^*$-algebra and $p = \frac{1}{|G|}\sum_g \lambda_g \in \Proj(\A \rtimes_\alpha G)$. Then the following are equivalent:
\begin{enumerate}
\item[i.] $p$ is full;
\item[ii.] $\alpha$ is saturated;
\item[iii.] $\Ind(M_G)=|G|1$, where $M_G:\A \to \A^G$ is the canonical conditional expectation.
\end{enumerate}
\end{theorem}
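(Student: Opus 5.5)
We prove the cycle (i) $\Rightarrow$ (ii) $\Rightarrow$ (iii) $\Rightarrow$ (i), working throughout in $\A \rtimes_{\alpha,r} G$ realized inside $\A \otimes B(\ell^2(G))$ as in Lemma \ref{corner_iso}. Recall that $p\lambda_g = \lambda_g p = p$ for all $g$, and that for $a \in \A$ we have $pap = M_G(a)p$. The calculation underlying everything is: for $a,b \in \A$,
\[
    a p b = \frac{1}{|G|}\sum_g a\lambda_g b = \frac{1}{|G|}\sum_g a\alpha_g(b)\lambda_g .
\]
Hence the closed two-sided ideal generated by $p$ is the closed linear span of elements of the form $\sum_g \big(\sum_j u_j\alpha_g(v_j)\big)\lambda_g$ with $u_j,v_j \in \A$. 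Two-sided multiplication by $\lambda_h$ adds nothing new, since $\lambda_h p = p$ and $a p b \lambda_h = a p \alpha_h^{-1}$-type rearrangements stay in the same span.

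(i) $\Rightarrow$ (ii). Assume $p$ is full. Then the algebraic ideal $\span\{apb\}$ is dense in the unital algebra $\A \rtimes_{\alpha,r} G$. It therefore contains an invertible element, and so equals everything. In particular $1 = \sum_j u_j p v_j$ for finitely many $u_j,v_j$. Applying $\bbE_{\A}(\,\cdot\,\lambda_g^*)$ to this identity and using the displayed formula gives
\[
    \frac{1}{|G|}\sum_j u_j\alpha_g(v_j) = \delta_{g,e} .
\]
Rescaling the $u_j$ by $|G|$ yields saturation.

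(ii) $\Rightarrow$ (i). Conversely, given saturating elements, $\sum_j u_j p v_j = \frac{1}{|G|}\cdot 1$. So $1$ lies in the ideal generated by $p$, and $p$ is full.

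(ii) $\Leftrightarrow$ (iii). Here we use the Watatani index of $M_G$, and this is the step needing the most care. Given saturating elements, I will produce an explicit quasi-basis. The aim is to arrange pairs $(w_j, w_j^*)$ with $\sum_j w_j\alpha_g(w_j^*) = \delta_{g,e}$. This is possible after the standard replacement trick: from $\sum u_j\alpha_g(v_j) = \delta_{g,e}$ one passes, via the same fullness argument inside the corner, to a positive decomposition. Alternatively, one can simply note that $\{(u_j,v_j)\}$ already works. Indeed, for $a \in \A$,
\[
    \sum_j u_j M_G(v_j a) = \frac{1}{|G|}\sum_g\sum_j u_j\alpha_g(v_j)\alpha_g(a) = \frac{1}{|G|}\, a ,
\]
so $\{(|G|^{1/2}u_j, |G|^{1/2}v_j)\}$ is a left quasi-basis. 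The right-hand condition follows by applying the adjoint to the identity for the pairs $(v_j^*,u_j^*)$; the identity $\sum_j v_j^*\alpha_g(u_j^*)=\delta_{g,e}$ holds after applying $\alpha_g^{-1}$ and the adjoint, using that $G$ is a group. Then
\[
    \Ind(M_G) = |G|\sum_j u_jv_j = |G|\cdot 1 .
\]

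For (iii) $\Rightarrow$ (ii), take a quasi-basis $\{(u_j,v_j)\}$ with $\sum_j u_jv_j = 1$ after normalizing by $|G|$. The quasi-basis identity gives, for all $a$,
\[
    a = \frac{1}{|G|}\sum_g c_g \alpha_g(a), \qquad c_g := |G|\sum_j u_j\alpha_g(v_j).
\]
Hence $\sum_g c_g\alpha_g(a) = |G|a$. Index $|G|$ means $c_e = |G|\sum_j u_jv_j = |G|$. Substituting gives $\sum_{g\neq e} c_g\alpha_g(a) = 0$ for all $a$.

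The main obstacle is concluding $c_g = 0$ for each $g \neq e$ from this linear relation. I would try the following. Pass to the crossed product element $c := \sum_g c_g\lambda_g$, which satisfies $c\,a\,p = |G|\, a\, p$-type identities. Then use faithfulness of the index and $c = |G| \cdot$ (an element built from $p$) to compare norms or traces. Concretely, $\sum_j u_j p v_j = \frac{1}{|G|^2}c$. So showing $c = |G|\cdot 1$ is equivalent to showing this element is the unit. I would derive this from the uniqueness of the index (Theorem \ref{fin_ind_facts}(i)) applied to the corner isomorphism of Lemma \ref{corner_iso} together with Lemma \ref{fin_ind_CE}, whose index is exactly $|G|$.
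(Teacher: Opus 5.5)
Your proof of (i)$\Leftrightarrow$(ii) is correct and follows the route the paper points to. Fullness of $p$ means $1=\sum_j x_jpy_j$, and since $\lambda_gp=p=p\lambda_g$ such sums reduce to $\sum_j u_jpv_j$ with $u_j,v_j\in\A$. Your (ii)$\Rightarrow$(iii) is also fine: the pairs $(|G|^{1/2}u_j,|G|^{1/2}v_j)$ built from saturating elements form a two-sided quasi-basis for $M_G$, so $\Ind(M_G)=|G|\sum_ju_jv_j=|G|1$.

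The genuine gap is (iii)$\Rightarrow$(ii). You reduce it to showing $c_g=0$ for $g\neq e$, given $\sum_{g\neq e}c_g\alpha_g(a)=0$ for all $a$, but you never prove this. The route you suggest also cannot work. Lemma \ref{fin_ind_CE} gives $\Ind(\E)=|G|(1\otimes I)$ for \emph{every} action of a finite group, saturated or not. For the trivial action the corner inclusion is $\A\subseteq\A$, with $M_G=\id$ of index $1$. So uniqueness of the index for $\E$ says nothing about the corner $p(\cdot)p$ unless you already know $p$ is full. That is (i), the very thing at stake.

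Your identity $cap=|G|ap$ is the right starting point. What is missing is to exploit it through the ideal generated by $p$ and the faithfulness of $\bbE_{\A}$.
\begin{itemize}
\item Take a quasi-basis $\{(u_j,v_j)\}$ for $M_G$ without normalizing, so $\sum_ju_jv_j=|G|1$, and put $x=\sum_ju_jpv_j$ (this is $c/|G|$ in your notation).
\item Use $pbp=M_G(b)p$ and the fact that $p$ commutes with $\A^G$. The two quasi-basis identities then give $xap=\sum_ju_jM_G(v_ja)p=ap$ and $pax=p\sum_jM_G(au_j)v_j=pa$ for all $a\in\A$.
\item Thus $x$ lies in the self-adjoint ideal $J=\span\{apb:a,b\in\A\}$ and is a two-sided unit for it. Hence $x=x^*=x^2$.
\item The $\lambda_e$-coefficient of $x$ is $\bbE_{\A}(x)=\frac{1}{|G|}\sum_ju_jv_j=1$. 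So the projection $1-x$ satisfies $\bbE_{\A}(1-x)=0$, and faithfulness gives $x=1$.
\item Compare $\lambda_g$-coefficients in $x=\frac{1}{|G|}\sum_g\big(\sum_ju_j\alpha_g(v_j)\big)\lambda_g=1$. This yields $\sum_j(|G|^{-1}u_j)\alpha_g(v_j)=\delta_{g,e}$, which is saturation. Also, $1=x\in J$ gives (i) directly.
\end{itemize}
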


\begin{proof}
The equivalence (i $\iff$ ii) is well-known. In fact, some authors use the fullness of $p$ as the definition of the saturation of $\alpha$. The reader who wants to work out the details should start with the fact that $p$ is full if and only if there exist $x_1, x_2, ..., x_n, y_1, y_2, ..., y_n \in \A \rtimes_{\alpha,r} G$ such that $\sum_j x_jpy_j=1$ \cite[Exercise 4.8]{RordamLarsenLaustenBook}. The equivalence (ii $\iff$ iii) is more recent \cite[Theorem 4.1]{JeongPark2008}, although with some effort one can give a proof which does not involve Hopf $*$-algebras. 
\end{proof}

\begin{remark}
If $\alpha:G \curvearrowright \A$ is not saturated, then it can happen that $M_G:\A \to \A^G$ does not have finite index \cite[Proposition 2.8.2]{WatataniBook}, in which case $\A^G \subseteq \A$ is not a finite-index inclusion, since $M_G(a^*a) \geq \frac{1}{|G|}a^*a$ for all $a \in \A$ \cite[Proposition 2.10.10]{WatataniBook}.
\end{remark}

Combining Lemma \ref{fin_ind_CE} and Theorem \ref{unique_CE_fin_ind}, we obtain the following important observation, which we will use repeatedly.

\begin{lemma} \label{unique_CE_crossed}
Let $\alpha:G \curvearrowright \A$ be an action of a finite group on a unital $C^*$-algebra. Then $\A \rtimes_{\alpha,r} G \subseteq \A \otimes B(\ell^2(G))$ has a unique conditional expectation if and only if
\[
    (\A \rtimes_{\alpha,r} G)' \cap (\A \otimes B(\ell^2(G))) = Z(\A \rtimes_{\alpha,r} G).
\]
\end{lemma}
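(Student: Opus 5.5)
This is immediate from two results already established. By Lemma \ref{fin_ind_CE}, the conditional expectation
\[
    \E:\A \otimes B(\ell^2(G)) \to \A \rtimes_{\alpha,r} G
\]
has finite index, with quasi-basis $\{(U_g,U_g^*): g \in G\}$ and $\Ind(\E)=|G|(1 \otimes I)$. So, in Watatani's sense, $\A \rtimes_{\alpha,r} G \subseteq \A \otimes B(\ell^2(G))$ is a finite-index inclusion. Theorem \ref{unique_CE_fin_ind} applies to every finite-index inclusion. It says such an inclusion has a unique conditional expectation if and only if the relative commutant of the smaller algebra equals its center. Taking the smaller algebra to be $\A \rtimes_{\alpha,r} G$ and the larger to be $\A \otimes B(\ell^2(G))$ gives exactly the stated equivalence.

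Two routine points need checking. First, the inclusion must be a unital $C^*$-inclusion, so that the definitions apply. Under the identification $\iota$, the unit of $\A \rtimes_{\alpha,r} G$ is
\[
    \iota(1) = \sum_g (1 \otimes E_{g,g}) = 1 \otimes I,
\]
which is the unit of $\A \otimes B(\ell^2(G))$. Second, Theorem \ref{unique_CE_fin_ind} needs only that the inclusion admits some finite-index conditional expectation, which Lemma \ref{fin_ind_CE} provides. The uniqueness question concerns all conditional expectations, and the cited equivalence handles that: ($\Rightarrow$) is \cite[Proposition 3.1]{Zarikian2019a} and ($\Leftarrow$) is \cite[Corollary 1.4.3]{WatataniBook}. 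We do not need $\E$ itself to be the unique one.

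There is no substantive obstacle; the only care needed is the bookkeeping of the embedding $\iota$, so that "relative commutant" and "center" are computed inside $\iota(\A \rtimes_{\alpha,r} G) \subseteq \A \otimes B(\ell^2(G))$.
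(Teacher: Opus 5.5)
Your proposal is correct and is exactly the paper's argument. The paper gives no separate proof; it says the lemma follows by combining the finite-index conditional expectation of Lemma \ref{fin_ind_CE} with Theorem \ref{unique_CE_fin_ind}, and your check that $\iota(1)=1\otimes I$ is a harmless bit of bookkeeping that the paper leaves implicit.
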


The following result will sometimes enable us to determine when Lemma \ref{unique_CE_crossed} applies.

\begin{lemma} \label{rel_comm_cross_prod}
Let $\alpha:G \curvearrowright \A$ be an action of a finite group on a unital $C^*$-algebra. Then
\[
    (\A \rtimes_\alpha G)' \cap (\A \otimes B(\ell^2(G))
    = \left\{\sum_g (d_g \otimes \rho_g): \begin{tabular}{c} $(\forall g \in G$) $d_g \in \A$ is a dependent\\ element for $\alpha_g \in \Aut(\A)$ \end{tabular}\right\},
\]
where $\rho:G \to B(\ell^2(G))$ is the right regular representation.
\end{lemma}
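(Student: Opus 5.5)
Since $|G|<\infty$, we can treat $\A \otimes B(\ell^2(G))$ concretely as $G\times G$ matrices $X=[x_{g,h}]$ with entries in $\A$. The crossed product $\A\rtimes_{\alpha,r}G$ is generated by the diagonal matrices $\iota(a)=\operatorname{diag}(\alpha_g^{-1}(a))_g$, $a\in\A$, together with the permutation matrices $1\otimes\lambda_k$, $k\in G$. The plan is to turn ``$X$ commutes with both families'' into explicit conditions on the entries and solve them.

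\textbf{Commuting with $1\otimes\lambda_k$.} We have $(1\otimes\lambda_k)X(1\otimes\lambda_k)^* = [x_{k^{-1}g,k^{-1}h}]$. So commuting with all $1\otimes\lambda_k$ means $x_{kg,kh}=x_{g,h}$ for all $g,h,k$, i.e.\ $x_{g,h}$ depends only on $g^{-1}h$. We write $x_{g,h}=d_{g^{-1}h}$ for some family $(d_s)_{s\in G}\subseteq\A$. Next we identify the matrix $\sum_s d_s\otimes\rho_s$. Since $\rho_s\delta_h=\delta_{hs^{-1}}$, we get $\rho_s=\sum_h E_{hs^{-1},h}$, whose $(g,h)$ entry is $1$ exactly when $g=hs^{-1}$, i.e.\ when $s=g^{-1}h$. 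Hence $\sum_s d_s\otimes\rho_s=[d_{g^{-1}h}]$. So the commutant of $1\otimes C^*(\lambda)$ in $\A\otimes B(\ell^2(G))$ is precisely $\{\sum_s d_s\otimes\rho_s : d_s\in\A\}$. This is just the matrix form of $\lambda(G)'=R(G)$, with coefficients in $\A$.

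\textbf{Commuting with $\iota(a)$.} The $(g,h)$ entry of $\iota(a)X$ is $\alpha_g^{-1}(a)d_{g^{-1}h}$, and that of $X\iota(a)$ is $d_{g^{-1}h}\alpha_h^{-1}(a)$. Setting $s=g^{-1}h$, the condition becomes
\[
\alpha_g^{-1}(a)\,d_s = d_s\,\alpha_{gs}^{-1}(a)\quad\text{for all } a\in\A,\ g,s\in G.
\]
Replacing $a$ by $\alpha_{gs}(b)$ rewrites this as $\alpha_s(b)d_s=d_sb$ for all $b\in\A$. This is exactly the statement that $d_s$ is a dependent element for $\alpha_s$. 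It is independent of $g$, which is why no extra condition appears. The reverse inclusion follows because each step is an equivalence. Finally, since $\iota(\A)$ and $1\otimes\lambda(G)$ generate $\A\rtimes_{\alpha,r}G$, commuting with the generators suffices.

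The only real obstacle is bookkeeping: the conventions for $\rho$, for $E_{g,h}$, and for the embedding $\iota$ using $\alpha_g^{-1}$ must all be consistent. Otherwise one obtains dependent elements for $\alpha_s^{-1}$ instead of $\alpha_s$, or $\rho_{s^{-1}}$ instead of $\rho_s$. The plan is to recheck the substitution $a=\alpha_{gs}(b)$ and the entry formula for $\rho_s$ carefully on a small example, such as $G=\bbZ_2$.
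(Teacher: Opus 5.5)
Your proposal is correct and follows essentially the same route as the paper. The paper also works entrywise: commuting with $1\otimes\lambda_k$ forces $x_{kg,kh}=x_{g,h}$, which it writes as $x_{h,hg}=x_{e,g}$ with $\sum_h E_{h,hg}=\rho_g$. Commuting with $\iota(a)$ then reduces to $\alpha_g(a)x_{e,g}=x_{e,g}a$. Your substitution $a=\alpha_{gs}(b)$ makes explicit the point the paper leaves implicit, namely that this condition does not depend on the row index.
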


\begin{proof}
Straightforward calculations show that 
\[
    \iota(a)\begin{bmatrix} x_{g,h} \end{bmatrix} = \begin{bmatrix} x_{g,h} \end{bmatrix}\iota(a)
    \iff (\forall g, h \in G) ~ \alpha_g^{-1}(a)x_{g,h} = x_{g,h}\alpha_h^{-1}(a)
\]
and
\[
    \iota(\lambda_k)\begin{bmatrix} x_{g,h} \end{bmatrix}
    = \begin{bmatrix} x_{g,h} \end{bmatrix}\iota(\lambda_k)
    \iff (\forall g, h \in G) ~ x_{kg,kh}=x_{g,h}.
\]
Thus $x = \begin{bmatrix} x_{g,h} \end{bmatrix} \in (\A \rtimes_{\alpha,r} G)' \cap (\A \otimes B(\ell^2(G)))$ if and only if
\[
    x = \sum_g\sum_h (x_{h,hg} \otimes E_{h,hg}) = \sum_g\sum_h (x_{e,g} \otimes E_{h,hg})
    = \sum_g (x_{e,g} \otimes \rho_g),
\]
where $\alpha_g(a)x_{e,g}=x_{e,g}a$ for all $a \in \A$.
\end{proof}

Another advantage of the assumption $|G| < \infty$ is that we can compute relevant injective envelopes exactly. 

\begin{lemma} \label{inj_env_fin_grp_act}
Let $\alpha:G \curvearrowright \A$ be an action of a finite group on a unital $C^*$-algebra. Then $I(\A \rtimes_{\alpha,r} G) = I(\A) \rtimes_{I(\alpha),r} G$ and $I(\A^G)=I(\A)^G$.
\end{lemma}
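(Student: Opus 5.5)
We prove both equalities with the same device. Take the inclusion $\mathcal{C} \subseteq I(\A)\rtimes_{I(\alpha),r} G$ (resp. $I(\A)^G$), where $\mathcal{C}$ is $\A\rtimes_{\alpha,r}G$ (resp. $\A^G$). We show that the larger algebra is injective and that the inclusion is rigid: the only ucp map on it fixing $\mathcal{C}$ is the identity. The characterization of the injective envelope as the rigid injective extension \cite{Hamana1979} then gives the result. Throughout, $I(\alpha):G\curvearrowright I(\A)$ is the unique extension of $\alpha$ to $I(\A)$.

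\emph{Injectivity.} Since $|G|<\infty$, $I(\A)\rtimes_{I(\alpha),r}G$ sits inside $I(\A)\otimes B(\ell^2(G)) \cong M_{|G|}(I(\A))$, which is injective. Lemma \ref{fin_ind_CE} applies verbatim with $I(\A)$ in place of $\A$ (its proof used only unitality), so it gives a conditional expectation $\E: I(\A)\otimes B(\ell^2(G))\to I(\A)\rtimes_{I(\alpha),r}G$. A ucp image of an injective operator system under a projection is injective, so the crossed product is injective. For the fixed points, the averaging map $M_G(x)=\frac{1}{|G|}\sum_g I(\alpha)_g(x)$ is a conditional expectation of $I(\A)$ onto $I(\A)^G$, so $I(\A)^G$ is injective as well.

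\emph{Rigidity for the crossed product.} Let $\Phi$ be a ucp map on $I(\A)\rtimes_{I(\alpha),r}G$ fixing $\A\rtimes_{\alpha,r}G$. Then $\Phi$ fixes each $\lambda_g$, so by the multiplicative domain argument \cite[Corollary 3.19]{PaulsenBook} it is a $C^*_r(G)$-bimodule map. Consequently $\Phi(\sum_g x_g\lambda_g)=\sum_g \Phi(x_g)\lambda_g$. Now define $\phi=\bbE_{I(\A)}\circ\Phi|_{I(\A)}: I(\A)\to I(\A)$. This is ucp and fixes $\A$, so rigidity of $\A\subseteq I(\A)$ forces $\phi=\id$. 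To conclude $\Phi(x)=x$ for $x\in I(\A)$, we must show the off-diagonal coefficients $\psi_g(x)=\bbE(\Phi(x)\lambda_g^*)$, $g\neq e$, vanish. These $\psi_g$ are cb maps that vanish on $\A$. For the positivity step, apply $\bbE_{I(\A)}$ to $\Phi(x^*x)\ge\Phi(x)^*\Phi(x)$ (Kadison--Schwarz). This gives
\[ x^*x = \phi(x^*x) \ge \sum_g I(\alpha)_{g^{-1}}\bigl(\psi_g(x)^*\psi_g(x)\bigr), \]
and since $\phi=\id$ also gives $\psi_e(x)=x$, the terms with $g\neq e$ contribute positive operators summing to at most $0$. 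Hence $\psi_g(x)=0$ for $g\neq e$, and so $\Phi=\id$. This Schwarz-inequality bookkeeping is the only delicate point.

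\emph{Rigidity for the fixed points.} Let $\Phi: I(\A)^G\to I(\A)^G$ be ucp with $\Phi|_{\A^G}=\id$. Consider $\Phi\circ M_G: I(\A)\to I(\A)^G\subseteq I(\A)$. It restricts on $\A$ to $M_G|_\A$, so it does not fix $\A$, and the argument cannot stop there. Instead we use the corner picture of Lemma \ref{corner_iso}. Replacing $\A$ by $I(\A)$ there, $I(\A)^G\cong p\,(I(\A)\rtimes G)\,p$, and this isomorphism carries $\A^G$ onto $p(\A\rtimes G)p$. Since the crossed product is the injective envelope of $\A\rtimes G$ by the first part, $p(I(\A)\rtimes G)p$ is injective. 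It is moreover the injective envelope of the corner $p(\A\rtimes G)p$: given a ucp map $\Phi$ on the corner fixing $p(\A\rtimes G)p$, extend it to a ucp map on $I(\A)\rtimes G$ fixing $\A\rtimes G$ (e.g.\ $x\mapsto \tilde\Phi(pxp)+p^\perp x p^\perp$ composed with an injectivity extension). Rigidity of the crossed product then gives the identity on the whole algebra, hence on the corner. Transporting back along $\pi$ yields $I(\A^G)=I(\A)^G$. We expect the main obstacle to be making this last extension step rigorous, i.e.\ producing a unital map that fixes $\A\rtimes G$ and not merely its corner. If that fails, we would fall back on Hamana's result that the injective envelope of a hereditary corner $pBp$ is $pI(B)p$.
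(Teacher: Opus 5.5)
Your treatment of the crossed product is correct, but it takes a different route from the paper. The paper never checks rigidity by hand. It gets $I(\A)\rtimes_{I(\alpha),r}G\subseteq I(\A\rtimes_{\alpha,r}G)$ from Hamana's theorem $I_G(\A)\rtimes_{I_G(\alpha),r}G\subseteq I(\A\rtimes_{\alpha,r}G)$, using the equivariant inclusion $I(\A)\subseteq I_G(\A)$. Injectivity of $I(\A)\rtimes_{I(\alpha),r}G$ (via Lemma \ref{fin_ind_CE}, exactly as you argue) then forces equality by minimality. You instead prove directly that $\A\rtimes_{\alpha,r}G\subseteq I(\A)\rtimes_{I(\alpha),r}G$ is rigid, using three ingredients:
\begin{itemize}
\item bimodularity of $\Phi$ over $\A\rtimes_{\alpha,r}G$;
\item rigidity of $\A\subseteq I(\A)$ for the $e$-th coefficient;
\item the Kadison--Schwarz estimate $x^*x\ge x^*x+\sum_{g\neq e}I(\alpha)_{g^{-1}}(\psi_g(x)^*\psi_g(x))$ to kill the remaining coefficients.
\end{itemize}
This checks out. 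It avoids the $G$-equivariant injective envelope and monotone crossed product machinery, and the rigidity half works for any discrete $G$; finiteness enters only through injectivity. Given Hamana's theorem, the paper's route is shorter.

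For the fixed-point algebra, the step you flagged does fail as written. The map $x\mapsto\tilde\Phi(pxp)+p^\perp xp^\perp$ sends $y\in\A\rtimes_{\alpha,r}G$ to $pyp+p^\perp yp^\perp$, discarding the off-diagonal corners $pyp^\perp$ and $p^\perp yp$. Since $p=\frac{1}{|G|}\sum_g\lambda_g$ commutes with $a\in\A$ only when $a\in\A^G$, this map does not fix $\A\rtimes_{\alpha,r}G$ unless the action is trivial, so rigidity cannot be invoked. Your fallback, $I(p\mathcal{B}p)=pI(\mathcal{B})p$, is precisely what the paper uses (\cite[Proposition 6.3]{Hamana1982I} combined with Rosenberg's corner isomorphism). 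With that fallback, your second half coincides with the paper's proof.

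If you prefer your own route, keep the off-diagonal parts instead of killing them. Set $\mathcal{B}=\A\rtimes_{\alpha,r}G$, $\widetilde{\mathcal{B}}=I(\A)\rtimes_{I(\alpha),r}G$, $q=1-p$, and let $\phi$ be a ucp map on $p\widetilde{\mathcal{B}}p$ fixing $p\mathcal{B}p$. Define $\Phi_0(s)=\phi(psp)+psq+qsp+qsq$ on the operator system $S=p\widetilde{\mathcal{B}}p+p\mathcal{B}q+q\mathcal{B}p+q\mathcal{B}q$. It is unital and fixes $\mathcal{B}$. It is completely positive by a Schur-complement argument. Let $s\in M_n(S)$ be positive with corners $X,Y,Z$, and let $q_n=q\otimes I_n$. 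For every $\varepsilon>0$ one has $X\ge Y(Z+\varepsilon q_n)^{-1}Y^*\in M_n(p\mathcal{B}p)$, and $\phi^{(n)}$ fixes this element. Hence $\phi^{(n)}(X)\ge Y(Z+\varepsilon q_n)^{-1}Y^*$, so $\Phi_0^{(n)}(s)+\varepsilon q_n\ge 0$, and letting $\varepsilon\to 0$ gives positivity. Now extend $\Phi_0$ to all of $\widetilde{\mathcal{B}}$ by injectivity and apply your rigidity result; this gives $\phi=\id$. This is essentially the proof of Hamana's corner theorem.
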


\begin{proof}
As noted in Section \ref{inj},
\[
    \A \rtimes_{\alpha,r} G \subseteq I(\A) \rtimes_{I(\alpha),r} G \subseteq I(\A \rtimes_{\alpha,r} G).
\]
But $I(\A) \rtimes_{I(\alpha),r} G$ is injective, since it is the image of $I(\A) \otimes B(\ell^2(G))$ under a conditional expectation (Lemma \ref{fin_ind_CE}). Thus $I(\A) \rtimes_{I(\alpha),r} G = I(\A \rtimes_{\alpha,r} G)$. Now let $p=\frac{1}{|G|}\sum_g \lambda_g \in \Proj(\A \rtimes_{\alpha,r} G)$. Then by Lemma \ref{corner_iso} and \cite[Proposition 6.3]{Hamana1982I}
\begin{eqnarray*}
    I(\A^G)
    &\cong& I(p(\A \rtimes_{\alpha,r} G)p)\\
    &\cong& pI(\A \rtimes_{\alpha,r} G)p\\
    &\cong& p(I(\A) \rtimes_{I(\alpha),r} G)p\\
    &\cong& I(\A)^G.
\end{eqnarray*}
\end{proof}

Using the previous lemma, we can relate pseudo-expectations for $\A^G \subseteq \A$ and $\A \rtimes_{\alpha,r} G \subseteq \A \otimes B(\ell^2(G))$ with conditional expectations for $I(\A)^G \subseteq I(\A)$ and $I(\A) \rtimes_{I(\alpha),r} G \subseteq I(\A) \otimes B(\ell^2(G))$, respectively.

\begin{lemma} \label{PsExp_extends_to_CE}
Let $\alpha:G \curvearrowright \A$ be an action of a finite group on a unital $C^*$-algebra. Then every pseudo-expectation for $\A \rtimes_{\alpha,r} G \subseteq \A \otimes B(\ell^2(G))$ extends to a conditional expectation for $I(\A) \rtimes_{I(\alpha),r} G \subseteq I(\A) \otimes B(\ell^2(G))$. Likewise every pseudo-expectation for $\A^G \subseteq \A$ extends to a conditional expectation for $I(\A)^G \subseteq I(\A)$.  
\end{lemma}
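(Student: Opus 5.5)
A pseudo-expectation $\Psi$ for $\A \rtimes_{\alpha,r} G \subseteq \A \otimes B(\ell^2(G))$ maps into $I(\A \rtimes_{\alpha,r} G) = I(\A) \rtimes_{I(\alpha),r} G$ by Lemma \ref{inj_env_fin_grp_act}. The plan is to extend $\Psi$ to a ucp map on $I(\A) \otimes B(\ell^2(G))$ and then argue that the extension is forced to fix $I(\A) \rtimes_{I(\alpha),r} G$ by rigidity of the injective envelope.

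Concretely, $I(\A) \rtimes_{I(\alpha),r} G$ is injective, and $\A \otimes B(\ell^2(G)) \subseteq I(\A) \otimes B(\ell^2(G))$. Arveson's extension theorem therefore gives a ucp extension
\[
\tilde{\Psi}: I(\A) \otimes B(\ell^2(G)) \to I(\A) \rtimes_{I(\alpha),r} G .
\]
Restricting $\tilde{\Psi}$ to $I(\A) \rtimes_{I(\alpha),r} G = I(\A \rtimes_{\alpha,r} G)$ gives a ucp self-map of the injective envelope fixing $\A \rtimes_{\alpha,r} G$. By rigidity of the injective envelope \cite{Hamana1979}, this restriction is the identity. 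Hence $\tilde{\Psi}$ is a ucp idempotent onto $I(\A) \rtimes_{I(\alpha),r} G$ fixing it pointwise, i.e.\ a conditional expectation. Two points must be confirmed here: that $I(\A) \otimes B(\ell^2(G))$ (equal to $M_{|G|}(I(\A))$) really contains $I(\A) \rtimes_{I(\alpha),r} G$ via the same embedding $\iota$ as in Section \ref{notation}, and that this containment is compatible with $\A \rtimes_{\alpha,r} G \subseteq \A \otimes B(\ell^2(G))$. Both follow from the explicit formulas for $\iota$, applied with $I(\alpha)$ in place of $\alpha$.

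The second statement is proved the same way. A pseudo-expectation for $\A^G \subseteq \A$ is a ucp map $\A \to I(\A^G) = I(\A)^G$ (Lemma \ref{inj_env_fin_grp_act}). Since $I(\A)^G$ is injective (it is the range of the conditional expectation $M_G$ on $I(\A)$, or alternatively it is isomorphic to $I(\A^G)$), the map extends to a ucp map $I(\A) \to I(\A)^G$. Its restriction to $I(\A)^G = I(\A^G)$ fixes $\A^G$, so rigidity again forces it to be the identity, and the extension is a conditional expectation.

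The main obstacle is making sure the identification $I(\A^G) \cong I(\A)^G$ is realized concretely: the copy of $I(\A^G)$ in which the pseudo-expectation takes values must be the literal subalgebra $I(\A)^G \subseteq I(\A)$, containing $\A^G$ in the standard way, and not merely an abstract isomorph. To settle this I would observe that $I(\A)^G$ is injective and contains $\A^G$. Any injective operator system containing $\A^G$ contains a copy of $I(\A^G)$, and by the minimality argument in the proof of Lemma \ref{inj_env_fin_grp_act} (via the corner $p(I(\A) \rtimes G)p$) that copy is all of $I(\A)^G$. Rigidity is then applied to the inclusion $\A^G \subseteq I(\A)^G$ directly.
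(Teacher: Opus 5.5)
Your proposal is correct and follows essentially the same route as the paper: identify the codomain via Lemma \ref{inj_env_fin_grp_act}, extend the pseudo-expectation to $I(\A)\otimes B(\ell^2(G))$ (resp.\ $I(\A)$) by injectivity of that codomain, and use rigidity of the injective envelope to conclude that the extension fixes $I(\A)\rtimes_{I(\alpha),r}G$ (resp.\ $I(\A)^G$) pointwise. Your extra step, realizing $I(\A^G)$ concretely as the subalgebra $I(\A)^G$ via the corner $p(I(\A)\rtimes_{I(\alpha),r}G)p$, only spells out what the paper compresses into ``we may assume that $I(\A^G)=I(\A)^G$.''
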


\begin{proof}
By Lemma \ref{inj_env_fin_grp_act}, we may assume that $I(\A^G)=I(\A)^G$. Let $\Psi:\A \to I(\A)^G$ be a ucp map such that $\Psi|_{\A^G}=\id$. By injectivity, there exists $\overline{\Psi}:I(\A) \to I(\A)^G$ such that $\overline{\Psi}|_{\A}=\Psi$. Since $\overline{\Psi}|_{\A^G}=\Psi|_{\A^G}=\id$, the rigidity of the injective envelope allows us to conclude that $\overline{\Psi}|_{I(\A)^G}=\id$, which says $\overline{\Psi}$ is a conditional expectation. The proof for $\A \rtimes_{\alpha,r} G \subseteq \A \otimes B(\ell^2(G))$ is similar. 
\end{proof}

\subsubsection{The case of $\A$ simple}

We first consider $\A^G \subseteq \A$ when $\A$ is simple. If the action $\alpha:G \curvearrowright \A$ is (pointwise) outer, then there exists a unique pseudo-expectation. While this might seem natural, we remind the reader that having a unique pseudo-expectation is a ``largeness'' condition for the subalgebra, whereas $\A^G$ becomes ``smaller'' as the action becomes less trivial.

\begin{theorem} \label{simple_fixed}
Let $\alpha:G \curvearrowright \A$ be a (pointwise) outer action of a finite group on a unital simple $C^*$-algebra $\A$. Then $\A \rtimes_{\alpha,r} G \subseteq \A \otimes B(\ell^2(G))$ has a unique pseudo-expectation, which implies $\A^G \subseteq \A$ also has a unique pseudo-expectation.
\end{theorem}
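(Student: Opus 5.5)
Let $\B := \A \rtimes_{\alpha,r} G \subseteq \mathcal{C} := \A \otimes B(\ell^2(G))$. The plan is to pass to injective envelopes, where pseudo-expectations become conditional expectations, and then use finite index together with a relative commutant computation to force uniqueness.

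By Lemma \ref{PsExp_extends_to_CE}, every pseudo-expectation for $\B \subseteq \mathcal{C}$ extends to a conditional expectation for
\[
I(\A) \rtimes_{I(\alpha),r} G \subseteq I(\A) \otimes B(\ell^2(G)).
\]
Distinct pseudo-expectations have distinct extensions. It therefore suffices to show that this enveloping inclusion has a \emph{unique} conditional expectation. Lemma \ref{fin_ind_CE} applies verbatim with $I(\A)$ in place of $\A$, so the enveloping inclusion has finite index. By Lemma \ref{unique_CE_crossed}, uniqueness of the conditional expectation is then equivalent to
\[
(I(\A) \rtimes_{I(\alpha),r} G)' \cap (I(\A) \otimes B(\ell^2(G))) = Z(I(\A) \rtimes_{I(\alpha),r} G).
\]

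By Lemma \ref{rel_comm_cross_prod}, the relative commutant consists of the elements $\sum_g d_g \otimes \rho_g$, where each $d_g \in I(\A)$ is a dependent element for $I(\alpha_g)$. For $g=e$ the dependent elements are exactly $Z(I(\A))$. Since $\A$ is simple and unital, $\A$ is prime, so $I(\A)$ is an injective AW$^*$-factor and $Z(I(\A)) = \bbC$. This gives $d_e \in \bbC$.

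For $g \neq e$ we must show $d_g = 0$. This is the main step. The plan is to use \cite[Theorem 3.5]{Zarikian2019a}, or directly Hamana's theory, with the following reasoning.
\begin{itemize}
\item If $d$ is a nonzero dependent element for $I(\alpha_g)$, then $d^*d$ and $dd^*$ lie in $Z(I(\A)) = \bbC$. Hence $d$ is a multiple of a unitary $u$ with $I(\alpha_g) = \Ad(u)$ on $I(\A)$.
\item Because $I(\A)$ is a factor, properly outer reduces to outer (the only central projection is $1$), so this says $I(\alpha_g)$ is inner.
\item For simple unital $\A$, an automorphism is outer if and only if it is properly outer. This is a known result of Hamana / Kishimoto: outer automorphisms of simple $C^*$-algebras have $I(\alpha)$ not inner, since an implementing unitary in $I(\A)$ would yield, via Kishimoto's condition, a contradiction with outerness.
\end{itemize}
Invoking this fact, pointwise outerness of $\alpha$ forces $d_g = 0$ for all $g \neq e$. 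The relative commutant is therefore $\bbC(1 \otimes I)$, which lies in the center, giving uniqueness.

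I expect the main obstacle to be this outer $\Rightarrow$ properly outer step for simple $\A$. I would cite Hamana's result that for prime (in particular simple) $\A$, $I(\alpha)$ is inner only if $\alpha$ is "quasi-inner", and Kishimoto's theorem that for simple $\A$ outer automorphisms are properly outer.

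Finally, the statement about $\A^G \subseteq \A$ follows by passing to the corner. By Lemma \ref{corner_iso}, $\A^G \subseteq \A$ is isomorphic to
\[
p(\A \rtimes_{\alpha,r} G)p \subseteq p(\A \otimes B(\ell^2(G)))p.
\]
If $p$ were full, Theorem \ref{PZ} would finish the argument. Since fullness is not available in general, I would instead use Lemma \ref{PsExp_extends_to_CE} again: any pseudo-expectation for $\A^G \subseteq \A$ extends to a conditional expectation $E\colon I(\A) \to I(\A)^G$. Transport $E$ via $\pi$ to the corner, and extend it to a conditional expectation for the enveloping inclusion by the formula $x \mapsto \mathcal{F}(x)$. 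Here $\mathcal{F}$ is built from the unique expectation $\E$ and the fact that $I(\A) \otimes B(\ell^2(G)) \ni x \mapsto pxp$ is compatible with $\E$. The preferred route is to show directly that the relative commutant of $I(\A)^G$ in $I(\A)$ is trivial, namely $I(\A)^{G\prime} \cap I(\A) = \bbC$. This should follow from the relative commutant computation above restricted to the corner: $p\bigl((\cdot)' \cap (\cdot)\bigr)p$ contains $\pi\bigl((I(\A)^G)' \cap I(\A)\bigr)$ once one checks that elements commuting with the corner can be propagated via $\B p \B$. Alternatively, apply Watatani's uniqueness result in the corner directly, using the fact that $M_G$ has finite index when $\alpha$ is outer on a simple algebra, since outer actions on simple algebras are saturated. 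Then $I(\A)^G \subseteq I(\A)$ has finite index with trivial relative commutant, so Theorem \ref{unique_CE_fin_ind} gives uniqueness of $M_G$ as a conditional expectation. Consequently the pseudo-expectation of $\A^G \subseteq \A$ is unique.
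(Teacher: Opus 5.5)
Your argument for the first assertion is essentially the paper's. You reduce, via Lemma \ref{PsExp_extends_to_CE}, to uniqueness of the conditional expectation for $I(\A)\rtimes_{I(\alpha),r}G\subseteq I(\A)\otimes B(\ell^2(G))$. You then apply Lemmas \ref{unique_CE_crossed} and \ref{rel_comm_cross_prod}, and eliminate the dependent elements $d_g$, $g\neq e$, using the cited fact that outer automorphisms of a simple algebra are properly outer, so $I(\alpha_g)$ is not inner on the factor $I(\A)$. Your observation that a nonzero dependent element in a factor is a scalar multiple of an implementing unitary is a fine way to see ``freely acting'' here.

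The gap is in the deduction for $\A^G\subseteq\A$. You say fullness of $p=\frac{1}{|G|}\sum_g\lambda_g$ is not available, but under these hypotheses it is. By Kishimoto's theorem, the reduced crossed product of a simple $C^*$-algebra by a pointwise outer action of a discrete group is simple, so every nonzero projection in $\A\rtimes_{\alpha,r}G$ is full. Your own remark that outer actions on simple algebras are saturated says the same thing, since saturation is equivalent to fullness of $p$ by Theorem \ref{saturated}. With $p$ full, the corner map $\pi$ carries $\A^G\subseteq\A$ onto $p(\A\rtimes_{\alpha,r}G)p\subseteq p(\A\otimes B(\ell^2(G)))p$. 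Theorem \ref{PZ} then gives the second assertion at once, which is exactly how the paper finishes.

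The alternatives you offer do not close the argument. The map $\mathcal{F}$ is never defined. Both your ``preferred route'' and the Watatani route rest on $(I(\A)^G)'\cap I(\A)=\bbC$, which you assert but do not prove. The propagation you gesture at would choose $v_1,\dots,v_n\in(I(\A)\rtimes_{I(\alpha),r}G)p$ with $\sum_i v_iv_i^*=1$. It would then set $y=\sum_i v_izv_i^*$, turning an element $z$ of the corner's relative commutant into one of the full relative commutant. That requires precisely the fullness you declared unavailable, as does the finite-index claim you base on saturation.

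Once you record that $p$ is full, the gap closes. Note that a relation $\sum_j x_jpy_j=1$ in $\A\rtimes_{\alpha,r}G$ persists in $I(\A)\rtimes_{I(\alpha),r}G$. Then either the direct appeal to Theorem \ref{PZ}, or your relative-commutant transfer followed by Theorem \ref{unique_CE_fin_ind}, completes the proof.
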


\begin{proof}
To show that $\A \rtimes_{\alpha,r} G \subseteq \A \otimes B(\ell^2(G))$ has a unique pseudo-expectation, it suffices to show that $I(\A) \rtimes_{I(\alpha),r} G \subseteq I(\A) \otimes B(\ell^2(G))$ has a unique conditional expectation (Lemma \ref{PsExp_extends_to_CE}). By Lemma \ref{unique_CE_crossed}, this is equivalent to showing that
\[
    (I(\A) \rtimes_{I(\alpha),r} G)' \cap (I(\A) \otimes B(\ell^2(G))) = Z(I(\A) \rtimes_{I(\alpha),r} G).
\]
Now since $\alpha:G \curvearrowright \A$ is outer and $\A$ is simple, $\alpha$ is actually properly outer, which implies $I(\alpha):G \curvearrowright I(\A)$ is properly outer, therefore freely acting (see \cite[Remark 4.3 and Section 2.3]{Zarikian2019a}). Also $I(\A)$ is simple \cite[Proposition 4.15]{Hamana1979}. Thus by Lemma \ref{rel_comm_cross_prod},
\[
    (I(\A) \rtimes_{I(\alpha),r} G)' \cap (I(\A) \otimes B(\ell^2(G)))
    = Z(I(\A)) \otimes I = \bbC(1 \otimes I),
\]
as required. Now since $\A \rtimes_{\alpha,r} G$ is simple \cite[Theorem 3.1]{Kishimoto1981}, $p=\frac{1}{|G|}\sum_g \lambda_g \in \Proj(\A \rtimes_{\alpha,r} G)$ is full. Therefore $\A^G \subseteq \A$ has a unique pseudo-expectation, by Lemma \ref{corner_iso} and Theorem \ref{PZ}.
\end{proof}

Theorem \ref{simple_fixed} suggests the following general question, which lies just beyond the scope of our results.

\begin{question} \label{fixed_question}
If $G \curvearrowright \A$ is a pointwise-outer, strongly-continuous action of a compact group on a unital $C^*$-algebra, does $\A^G \subseteq \A$ have a unique pseudo-expectation?
\end{question}

In particular, we would very much like to know the answer to the following concrete version of Question \ref{fixed_question}.

\begin{question} \label{CAR}
If $\A = \bigotimes_{n=1}^\infty \bbM_2$ is the CAR algebra and $\alpha:\bbT \curvearrowright \A$ is the gauge action, does $\A^{\bbT} \subseteq \A$ have a unique pseudo-expectation? If so, does $\A^{\bbT} \subseteq \A$ have the AEP?
\end{question}

\begin{remark}
It is well-known that the CAR algebra is simple, and \cite[Theorem 5]{Araki1970} shows that gauge action is (pointwise) outer. Of course Theorem \ref{simple_fixed} doesn't apply, since $|\bbT|=\infty$. It is easy to see that $\A^{\bbT} \subseteq \A$ has a (faithful) unique conditional expectation, namely $E(a) = \int_{\bbT} \alpha_z(a)d\mu(z)$, where $\mu$ is the Haar measure. On the other hand, $\A^{\bbT} \subseteq \A$ fails the PEP, since \cite[Theorem 5.6]{BakerPowers1986} implies that every $\omega \in PS(\bbM_2)$ gives rise to $\rho = \otimes_{n=1}^\infty \omega \in PS(\A)$ with $\rho|_{\A^{\bbT}} \in PS(\A^{\bbT})$, but $\rho \circ \alpha_z \neq \rho$ if $\omega \circ \Ad\left(\begin{bmatrix} z & 0\\ 0 & \o{z} \end{bmatrix}\right) \neq \omega$. Since $\A$ is separable, if it turns out that $\A^{\bbT} \subseteq \A$ has a unique pseudo-expectation but fails the AEP, then one would have an example of a $C^*$-inclusion with a unique pseudo-expectation but which fails aperiodicity, answering the Question \ref{open} negatively.
\end{remark}

Sticking with simple $C^*$-algebras, we now consider (pointwise) inner actions $\alpha:G \curvearrowright \A$. We remind the reader that even though
\[
    (\forall g \in G)(\exists w_g \in U(\A)) ~ \alpha_g=\Ad(w_g),
\]
the map $w:G \to U(\A)$ need not be a representation, even if $G$ is abelian \cite[Example 5.6]{Phillips2009}. On the other hand, since $\A$ is simple, $w$ is a \emph{projective representation}, meaning that $w_gw_h=\sigma(g,h)w_{gh}$ for a 2-cocycle $\sigma:G \times G \to \bbT$. Although the following lemma is a standard result in group cohomology, we give an unsophisticated proof for the reader's convenience.

\begin{lemma} \label{cohomology}
Let $w:G \to U(\A)$ be a projective representation of a finite \ul{abelian} group. Then the following are equivalent:
\begin{enumerate}
\item[i.] $w_gw_h=w_hw_g$ for all $g, h \in G$;
\item[ii.] there exists a (genuine) representation $u:G \to U(\A)$ such that $u_g \in \bbC w_g$ for all $g \in G$.
\end{enumerate}
\end{lemma}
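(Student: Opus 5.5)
(ii)$\Rightarrow$(i) is immediate. If $u_g=\mu_g w_g$ with $\mu_g\in\bbT$ (scalars of modulus one, since $u_g,w_g$ are unitaries) and $u$ is a representation of the abelian group $G$, then $u_gu_h=u_{gh}=u_hu_g$. Hence $\mu_g\mu_h w_gw_h=\mu_h\mu_g w_hw_g$, and cancelling the nonzero scalar gives $w_gw_h=w_hw_g$.

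For (i)$\Rightarrow$(ii), first translate commutativity into a condition on the cocycle. From $w_gw_h=\sigma(g,h)w_{gh}$ and $w_hw_g=\sigma(h,g)w_{hg}=\sigma(h,g)w_{gh}$ (using $gh=hg$), condition (i) forces $\sigma(g,h)=\sigma(h,g)$ for all $g,h$, since $w_{gh}\neq 0$. So $\sigma$ is a symmetric 2-cocycle. The goal is to find $\mu:G\to\bbT$ with $\sigma(g,h)=\mu_{gh}\mu_g^{-1}\mu_h^{-1}$; then $u_g=\mu_g w_g$ satisfies $u_gu_h=\mu_g\mu_h\sigma(g,h)w_{gh}=\mu_{gh}w_{gh}=u_{gh}$. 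After replacing $w_e$ by $w_e^*w_e$-normalized data we may also assume $w_e=1$, so $\sigma(e,\cdot)=\sigma(\cdot,e)=1$.

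To avoid cohomology machinery, decompose $G\cong \bbZ_{n_1}\times\cdots\times\bbZ_{n_k}$ with generators $g_1,\dots,g_k$. I will construct $u$ directly. Since $w_{g_i}^{n_i}$ is a scalar multiple of $w_e=1$, write $w_{g_i}^{n_i}=c_i 1$ with $c_i\in\bbT$. Choose an $n_i$-th root $\zeta_i$ of $\bar c_i$ and set $v_i=\zeta_i w_{g_i}$. Then $v_i^{n_i}=1$, and the $v_i$ pairwise commute by (i). Define
\[
    u_{g_1^{m_1}\cdots g_k^{m_k}}=v_1^{m_1}\cdots v_k^{m_k}.
\]
This is well defined because $v_i^{n_i}=1$, and it is a homomorphism because the $v_i$ commute. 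By repeated use of the projective relation, $v_1^{m_1}\cdots v_k^{m_k}$ is a scalar multiple of $w_{g_1^{m_1}\cdots g_k^{m_k}}$, so $u_g\in\bbC w_g$.

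The main point needing care is the well-definedness step: it uses both that $w_{g_i}^{n_i}$ is scalar and that the $v_i$ commute. This is exactly where (i) enters, because without commutativity the $v_i$ would only commute up to scalars $\sigma(g_i,g_j)\sigma(g_j,g_i)^{-1}$, and the map would fail to be a homomorphism. Everything else is routine bookkeeping with unimodular scalars.
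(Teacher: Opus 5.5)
Your proof is correct and follows essentially the same route as the paper. Both split $G$ into cyclic factors, rescale each generator's unitary by an $n_i$-th root of the scalar $w_{g_i}^{n_i}$ to get a genuine representation of that factor, and multiply these together using the commutativity in (i). Your symmetric-cocycle paragraph is not needed for the argument, but it does no harm. The normalization $w_e=1$ is also unnecessary, since $w_e^2=\sigma(e,e)w_e$ already forces $w_e$ to be a scalar.
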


\begin{proof}
(i $\implies$ ii) If $G = \langle g \rangle \cong \bbZ_n$, then $w_g^n = \zeta1$ for some $\zeta \in \bbT$. Setting $u_{g^k}=(\zeta^{-1/n}w_g)^k$ for $k \in \bbZ$ does the job. In general, if $G=G_1G_2...G_n$ (internal direct product), where each $G_j$ is cyclic, then letting $u^{(j)}:G_j \to U(\A)$ be a (genuine) representation such that $u_{g_j}^{(j)} \in \bbC w_{g_j}$ for all $g_j \in G_j$, we have that $u:G \to U(\A)$ defined by
\[
    u_g = u_{g_1}^{(1)}u_{g_2}^{(2)}...u_{g_n}^{(n)}, ~ g = g_1g_2...g_n \in G_1G_2...G_n,
\]
does the job. Indeed, by assumption, $u_{g_j}^{(j)}u_{g_k}^{(k)}=u_{g_k}^{(k)}u_{g_j}^{(j)}$ for all $g_j \in G_j$ and $g_k \in G_k$. 

(ii $\implies$ i) Trivial.
\end{proof}

We will also need the following double relative commutant result, which is probably known.

\begin{lemma} \label{double_rel_commutant}
Let $\A$ be a unital simple $C^*$-algebra and $\U \subseteq \A$ be a unital finite-dimensional $C^*$-subalgebra. Then
\[
    (\U' \cap \A)' \cap \A = \U.
\]
\end{lemma}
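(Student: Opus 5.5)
We must show $(\U'\cap\A)'\cap\A=\U$ for a unital simple $\A$ and a unital finite-dimensional $C^*$-subalgebra $\U\subseteq\A$. The inclusion $\U\subseteq(\U'\cap\A)'\cap\A$ is trivial. The plan for the reverse inclusion is to reduce to the case $\U\cong\bbM_n$ via Lemma \ref{rel_commutant_matrix_alg}, and then use simplicity to identify the centre of the relative commutant.

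\emph{Factor case.} Suppose first that $\U\cong\bbM_n$, with matrix units $\{e_{ij}\}$. By Lemma \ref{rel_commutant_matrix_alg}, $\A\cong\bbM_n\otimes\C$ with $\C=\U'\cap\A$, and under this isomorphism $\U$ corresponds to $\bbM_n\otimes 1$. Since $\A$ is simple, so is $\C$, because ideals of $\bbM_n\otimes\C$ are exactly $\bbM_n\otimes J$. Then
\[
(\U'\cap\A)'\cap\A=(1\otimes\C)'\cap(\bbM_n\otimes\C)=\bbM_n\otimes Z(\C)=\bbM_n\otimes\bbC1=\U,
\]
where $Z(\C)=\bbC1$ because $\C$ is simple and unital.

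\emph{General case.} Write $\U=\bigoplus_{i=1}^k\U z_i$, where the $z_i$ are the minimal central projections of $\U$ and $\U z_i\cong\bbM_{n_i}$. Take $x\in(\U'\cap\A)'\cap\A$. Each $z_i$ lies in $\U'\cap\A$, so $x$ commutes with every $z_i$. Hence $x=\sum_i z_ixz_i$ with $z_ixz_i\in z_i\A z_i$.

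Next, $\U'\cap\A=\bigoplus_i(\U z_i)'\cap z_i\A z_i$. Indeed, any $y\in\U'\cap\A$ commutes with the $z_i$, so $y=\sum_i z_iyz_i$. Conversely, each summand $(\U z_i)'\cap z_i\A z_i$ sits inside $\U'\cap\A$.

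It follows that $z_ixz_i$ commutes with $(\U z_i)'\cap z_i\A z_i$ inside $z_i\A z_i$. The corner $z_i\A z_i$ is a hereditary subalgebra of a simple algebra, hence simple and unital with unit $z_i$. Applying the factor case to $\U z_i\subseteq z_i\A z_i$ gives $z_ixz_i\in\U z_i$. Therefore $x\in\U$.

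The only real obstacle is confirming simplicity of the corner $z_i\A z_i$ and of the relative commutant $\C$. Both are standard: hereditary subalgebras of simple algebras are simple, and simplicity of $\bbM_n\otimes\C$ is equivalent to simplicity of $\C$. The rest is bookkeeping with the isomorphism of Lemma \ref{rel_commutant_matrix_alg}, checking that $1\otimes\C$ has commutant $\bbM_n\otimes Z(\C)$ in $\bbM_n\otimes\C$. This is a direct entrywise computation: a matrix commuting with all scalar-diagonal $c\,I$ has entries in $Z(\C)$.
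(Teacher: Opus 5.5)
Your proof is correct and follows the paper's argument: identify $\U\subseteq\A$ with $\bbM_n\otimes 1\subseteq\bbM_n\otimes\C$ via Lemma \ref{rel_commutant_matrix_alg}, compute the double relative commutant as $\bbM_n\otimes Z(\C)$ with $Z(\C)=\bbC1$ by simplicity, and then reduce the general case to the simple corners $z_i\A z_i$ cut down by the minimal central projections of $\U$. The only difference is cosmetic: you check the commutant and centre computations by hand (entrywise commutation, and simplicity of $\C$ from the ideal structure of $\bbM_n\otimes\C$), whereas the paper cites Haydon--Wassermann for both.
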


\begin{proof}
First assume that $\U \cong \bbM_d$. Then the $C^*$-inclusion $\U \subseteq \A$ may be identified with the $C^*$-inclusion $\bbM_d \otimes 1 \subseteq \bbM_d \otimes (\U' \cap \A)$ \cite[Lemma 6.6.3]{KR2}. Now by \cite[Theorem 1]{HaydonWassermann1973},
\begin{eqnarray*}
    ((\bbM_d \otimes 1)' \cap (\bbM_d \otimes (\U' \cap \A)))' \cap (\bbM_d \otimes (\U' \cap \A))
    &=& (Z(\bbM_d) \otimes (\U' \cap \A))' \cap (\bbM_d \otimes (\U' \cap \A))\\
    &=& (I \otimes (\U' \cap \A))' \cap (\bbM_d \otimes (\U' \cap \A))\\
    &=& \bbM_d \otimes Z(\U' \cap \A).
\end{eqnarray*}
On the other hand, by \cite[Corollary 1]{HaydonWassermann1973},
\[
    \bbC(I \otimes 1) = Z(\bbM_d \otimes (\U' \cap \A)) = Z(\bbM_d) \otimes Z(\U' \cap \A) = 1 \otimes Z(\U' \cap \A),
\]
which implies $Z(\U' \cap \A) = \bbC1$. It follows that
\[
    ((\bbM_d \otimes 1)' \cap (\bbM_d \otimes (\U' \cap \A)))' \cap (\bbM_d \otimes (\U' \cap \A))
    = \bbM_d \otimes 1,
\]
which shows that $(\U' \cap \A)' \cap \A = \U$.\\

In general, there exist $z_1, z_2, ..., z_n \in \Proj(Z(\U))$ such that $\sum_{j=1}^n z_j=1$ and $\U z_j \cong \bbM_{d_j}$, $1 \leq j \leq n$. Then since $z_j\A z_j$ is simple \cite[Theorem 3.2.8]{MurphyBook}, the previous paragraph applies to show that
\[
    (\U' \cap \A)' \cap \A = \sum_{j=1}^n (((\U z_j)' \cap (z_j\A z_j))' \cap (z_j\A z_j)) = \sum_{j=1}^n \U z_j = \U.
\]
\end{proof}

With the help of Lemmas \ref{cohomology} and \ref{double_rel_commutant} above, we establish the following relative commutant result.

\begin{lemma} \label{rel_comm_inner}
Let $\alpha:G \curvearrowright \A$ be a (pointwise) inner action of a finite group on a unital simple $C^*$-algebra $\A$. Then
\[
    (\A \rtimes_{\alpha,r} G)' \cap (\A \otimes B(\ell^2(G)) = Z(\A \rtimes_{\alpha,r} G)
\]
if and only if $G$ is abelian and there exists a (genuine) representation $u:G \to U(\A)$ such that $\alpha_g = \Ad(u_g)$ for all $g \in G$.
\end{lemma}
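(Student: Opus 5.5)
The plan is to compute the relative commutant explicitly using Lemma \ref{rel_comm_cross_prod}, and then to compare it with the center. Two general facts drive the argument. First, $Z(\A \rtimes_{\alpha,r} G) \subseteq (\A \rtimes_{\alpha,r} G)' \cap (\A \otimes B(\ell^2(G)))$ always holds. Second, any element of the relative commutant that lies in $\A \rtimes_{\alpha,r} G$ is automatically central. So equality holds if and only if the relative commutant is contained in $\A \rtimes_{\alpha,r} G$. When equality holds, the relative commutant is also commutative.

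\textbf{Step 1 (dependent elements).} For each $g \in G$, fix $w_g \in U(\A)$ with $\alpha_g = \Ad(w_g)$. Suppose $d$ is a dependent element for $\alpha_g$, so $w_gaw_g^*d = da$ for all $a \in \A$. Then $w_g^*d$ commutes with $\A$, hence lies in $Z(\A) = \bbC1$, because $\A$ is unital and simple. Therefore the dependent elements for $\alpha_g$ are exactly $\bbC w_g$. By Lemma \ref{rel_comm_cross_prod},
\[
    (\A \rtimes_{\alpha,r} G)' \cap (\A \otimes B(\ell^2(G))) = \span\{W_g : g \in G\}, \qquad W_g := w_g \otimes \rho_g .
\]
Note that $W_gW_h = w_gw_h \otimes \rho_{gh}$.

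\textbf{Step 2 (necessity).} Assume equality with the center. Then the $W_g$ pairwise commute, so $w_gw_h \otimes \rho_{gh} = w_hw_g \otimes \rho_{hg}$ for all $g, h \in G$.
\begin{itemize}
\item If $gh \neq hg$, then the permutation matrices $\rho_{gh}$ and $\rho_{hg}$ have disjoint supports. Since $w_gw_h \neq 0$, comparing matrix entries gives a contradiction. Hence $G$ is abelian.
\item With $G$ abelian, comparing the nonzero entries now gives $w_gw_h = w_hw_g$.
\end{itemize}
Since $\A$ is simple, $w$ is a projective representation, as noted before Lemma \ref{cohomology}. Lemma \ref{cohomology} then yields a genuine representation $u$ with $u_g \in \bbC w_g$, so $\Ad(u_g) = \alpha_g$.

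\textbf{Step 3 (sufficiency).} Suppose $G$ is abelian and $u$ is a genuine representation implementing $\alpha$. We may take $w_g = u_g$, since rescaling by scalars does not change the span in Step 1. It then suffices to show each $u_g \otimes \rho_g \in \A \rtimes_{\alpha,r} G$. Consider the element $u_g\lambda_{g^{-1}}$. Under $\iota$ it maps to
\[
    \sum_h \alpha_h^{-1}(u_g) \otimes E_{g^{-1}h,h}.
\]
Here $\alpha_h^{-1}(u_g) = u_h^*u_gu_h = u_g$, because $u$ is a genuine representation of an abelian group. Also, since $G$ is abelian, $\rho_g\delta_h = \delta_{hg^{-1}} = \delta_{g^{-1}h}$, so $\sum_h E_{g^{-1}h,h} = \rho_g$. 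Hence $\iota(u_g\lambda_{g^{-1}}) = u_g \otimes \rho_g$. Thus the relative commutant lies in $\A \rtimes_{\alpha,r} G$, and by the opening remark it equals the center.

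\textbf{Main obstacle.} I expect the delicate point to be Step 2. It requires checking that the matrix-entry comparison really forces both commutativity of $G$ and commutativity of the $w_g$. It also requires confirming that the cocycle formalism is legitimate, namely that simplicity forces $w_gw_h \in \bbT w_{gh}$, so that Lemma \ref{cohomology} applies. Lemma \ref{double_rel_commutant} should not be needed for this argument.
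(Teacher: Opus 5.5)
Your proposal is correct and follows the paper's proof essentially step for step. You identify the relative commutant as $\span\{w_g\otimes\rho_g\}$ via Lemma \ref{rel_comm_cross_prod} and simplicity; for necessity you use commutation of the central elements $w_g\otimes\rho_g$ together with Lemma \ref{cohomology}; for sufficiency you note $u_g\otimes\rho_g=u_g\lambda_{g^{-1}}$, using $u_g\in\A^G$ and $\rho_g=\lambda_g^*$ for abelian $G$. One harmless slip: in $\iota(u_g\lambda_{g^{-1}})$ the coefficient of $E_{g^{-1}h,h}$ is $\alpha_{g^{-1}h}^{-1}(u_g)$ rather than $\alpha_h^{-1}(u_g)$, but both equal $u_g$.
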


\begin{proof}
Recall that
\[
    (\A \rtimes_\alpha G)' \cap (\A \otimes B(\ell^2(G))
    = \left\{\sum_g (d_g \otimes \rho_g): \begin{tabular}{c} $(\forall g \in G$) $d_g \in \A$ is a dependent\\ element for $\alpha_g \in \Aut(\A)$ \end{tabular}\right\}
\]
(Lemma \ref{rel_comm_cross_prod}). ($\Longrightarrow$) Fix $g \in G$ and let $w_g \in U(\A)$ be such that $\alpha_g = \Ad(w_g)$. Then $w_g$ is a dependent element for $\alpha_g$, and so $w_g \otimes \rho_g \in Z(\A \rtimes_{\alpha,r} G)$. Thus for all $g, h \in G$, we have that
\[
    w_gw_h \otimes \rho_{gh} = (w_g \otimes \rho_g)(w_h \otimes \rho_h) = (w_h \otimes \rho_h)(w_g \otimes \rho_g) = w_hw_g \otimes \rho_{hg}.
\]
It follows that $gh=hg$ and $w_gw_h=w_hw_g$ for all $g, h \in G$. Thus $G$ is abelian and (by Lemma \ref{cohomology}) there exists a representation $u:G \to U(\A)$ such that $\alpha_g=\Ad(u_g)$ for all $g \in G$.

($\Longleftarrow$) Since $G$ is abelian, $\rho_g = \lambda_g^*$. Now suppose $\alpha_g(a)d_g=d_ga$ for all $a \in \A$. Then $u_g^*d_g \in Z(\A) = \bbC1$, which implies $d_g \in \bbC u_g \subseteq \A^G$. Thus
\[
    \sum_g (d_g \otimes \rho_g) = \sum_g (d_g \otimes I)(1 \otimes \lambda_g^*) = \sum_g d_g\lambda_g^* \in \A \rtimes_{\alpha,r} G.
\]
\end{proof}

Finally we arrive at the main result.

\begin{theorem}
Let $\alpha:G \curvearrowright \A$ be a (pointwise) inner action of a finite group on a unital simple $C^*$-algebra $\A$. Fix any projective representation $w:G \to U(\A)$ such that $\alpha_g=\Ad(w_g)$ for all $g \in G$. Then the following statements hold:
\begin{enumerate}
\item[i.] $\A^G \subseteq \A$ has a unique pseudo-expectation $\iff$ $w_gw_h=w_hw_g$ for all $g, h \in G$.
\item[ii.] $\A \otimes_{\alpha,r} G \subseteq \A \otimes B(\ell^2(G))$ has a unique pseudo-expectation $\iff$ $G$ is abelian and $w_gw_h=w_hw_g$ for all $g, h \in G$ $\iff$ $G$ is abelian and there exists a (genuine) representation $u:G \to U(\A)$ such that $\alpha_g=\Ad(u_g)$ for all $g \in G$.
\end{enumerate}
In particular, (2) $\implies$ (1).
\end{theorem}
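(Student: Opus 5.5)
For (ii) I would go through the relative commutant and the finite-index machinery. For (i) I would argue directly with the finite-dimensional $C^*$-subalgebra $\U = C^*(\{w_g : g \in G\}) \subseteq \A$. Two facts hold throughout. First, $w_g w_h \in \bbT w_{gh}$, so $\U = \span\{w_g\}$ is finite-dimensional. Second, since $\alpha_g = \Ad(w_g)$, we have $\A^G = \U' \cap \A$.

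\emph{Part (ii), forward direction.} Suppose $\A \rtimes_{\alpha,r} G \subseteq \A \otimes B(\ell^2(G))$ has a unique pseudo-expectation.
\begin{itemize}
\item By Lemma \ref{fin_ind_CE} this inclusion admits a conditional expectation, so by Theorem \ref{hierarchy} it has a unique conditional expectation.
\item By Lemma \ref{unique_CE_crossed}, $(\A \rtimes_{\alpha,r} G)' \cap (\A \otimes B(\ell^2(G))) = Z(\A \rtimes_{\alpha,r} G)$.
\item By Lemma \ref{rel_comm_inner}, $G$ is abelian and there is a genuine representation $u$ with $\alpha_g = \Ad(u_g)$.
\end{itemize}
Since $Z(\A) = \bbC 1$, we get $u_g \in \bbC w_g$. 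Hence the $w_g$ commute; conversely, Lemma \ref{cohomology} gives that commuting $w_g$ (with $G$ abelian) yield such a $u$. This settles all the equivalences in (ii) except the converse direction.

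\emph{Part (ii), converse direction.} Assume $G$ is abelian and $u$ is a genuine representation.
\begin{itemize}
\item Pass to $I(\A)$. It is simple by \cite[Proposition 4.15]{Hamana1979}. By uniqueness of extensions, $I(\alpha)_g = \Ad(u_g)$ with the same $u_g \in \A \subseteq I(\A)$.
\item Lemma \ref{rel_comm_inner} applied to $I(\alpha)$ gives the relative commutant condition for $I(\A) \rtimes_{I(\alpha),r} G \subseteq I(\A) \otimes B(\ell^2(G))$.
\item Lemma \ref{unique_CE_crossed} then gives a unique conditional expectation for that inclusion.
\item By Lemma \ref{PsExp_extends_to_CE} every pseudo-expectation extends to such a conditional expectation, so it is unique. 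This is the argument of Theorem \ref{simple_fixed}.
\end{itemize}

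\emph{Part (i), commuting case.} If the $w_g$ commute, then $\U$ is abelian, with minimal projections $e_1, \dots, e_m$ summing to $1$. Each $e_j$ lies in $\U$ and commutes with $\U$, so $e_j \in \A^G$, and $\A^G = \U' \cap \A = \sum_j e_j \A e_j$. Let $\Psi : \A \to I(\A^G)$ be a pseudo-expectation. It is an $\A^G$-bimodule map by Choi's multiplicative domain argument. Hence $\Psi(e_i a e_j) = e_i \Psi(a) e_j = e_i e_j \Psi(a)=0$ for $i \neq j$, while $\Psi(e_j a e_j) = e_j a e_j$. So $\Psi$ is determined, and it equals $M_G$.

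\emph{Part (i), non-commuting case.} Suppose the $w_g$ do not commute, so $\U$ is non-abelian. Choose a minimal central projection $z$ of $\U$ with $\U z \cong \bbM_d$, $d \geq 2$. By Lemma \ref{rel_commutant_matrix_alg}, $z\A z \cong \bbM_d \otimes \C$ with $\C = (\U z)' \cap z\A z$, and under this identification $\A^G z = 1 \otimes \C$. On $z\A z$ I would compare two conditional expectations onto $1 \otimes \C$:
\begin{itemize}
\item $\operatorname{tr} \otimes \id$, which is $M_G$ restricted to $z\A z$;
\item $\omega_{11} \otimes \id$, where $\omega_{11}$ is the state $x \mapsto x_{11}$.
\end{itemize}
On the other central summands $z'\A z'$ of $\U$ I would use $M_G$. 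Adding, and noting that $\A^G$ lies in $\sum_{z'} z'\A z'$ (so compressing by the central projections of $\U$ preserves $\A^G$), I get two distinct conditional expectations $\A \to \A^G$. By Theorem \ref{hierarchy}, $\A^G \subseteq \A$ then fails to have a unique pseudo-expectation.

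\emph{Main obstacle.} The step I expect to be most delicate is this last one. I need to verify that the compression $a \mapsto \sum_{z'} z' a z'$ followed by the summandwise maps is ucp and fixes $\A^G$. I also need to justify the identification $\A^G z = 1 \otimes \C$ inside $z\A z$, which uses $Z(\U' \cap \A)$-type arguments as in Lemma \ref{double_rel_commutant}.

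Finally, (ii) $\implies$ (i) is immediate from the equivalences: the condition in (ii) (abelian $G$ and commuting $w_g$) contains the condition in (i).
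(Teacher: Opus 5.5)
Your proposal is correct. Part (ii) follows the paper's own argument: unique conditional expectation, then Lemma \ref{unique_CE_crossed}, then Lemma \ref{rel_comm_inner}, and for the converse the same argument over $I(\A)$ together with Lemma \ref{PsExp_extends_to_CE}. Your observation that $u_g\in\bbC w_g$ because $Z(\A)=\bbC1$ neatly supplies the middle equivalence.

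For part (i), however, you take a genuinely different route. The paper's route is:
\begin{enumerate}
\item[(a)] invoke Izumi's theorem that $\A^G\subseteq\A$ has finite Watatani index;
\item[(b)] compute $(\A^G)'\cap\A=\W$ using Lemma \ref{double_rel_commutant}, which rests on Haydon--Wassermann and is where simplicity of $\A$ is used;
\item[(c)] apply Watatani's criterion, Theorem \ref{unique_CE_fin_ind}, both to $\A^G\subseteq\A$ and to $I(\A)^G\subseteq I(\A)$, using that $I(\A)$ is simple and $I(\alpha)_g=\Ad(w_g)$;
\item[(d)] lift back to pseudo-expectations via Lemma \ref{PsExp_extends_to_CE}.
\end{enumerate}
You instead work directly with the finite-dimensional algebra $\U=\span\{w_g\}$ and the identity $\A^G=\U'\cap\A$. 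In the commuting case, bimodularity forces $\Psi=\sum_j e_j(\cdot)e_j$. In the non-commuting case, the decomposition $z\A z\cong\bbM_d\otimes\C$ yields two explicit, distinct conditional expectations $\A\to\A^G$.

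What each route buys:
\begin{itemize}
\item Yours avoids finite-index theory, the double relative commutant theorem and injective envelopes of $\A$ entirely. It uses simplicity only through the fact that $w$ is a projective representation, so that $\U$ is finite-dimensional. It also identifies the unique pseudo-expectation explicitly.
\item The paper's route showcases the Watatani machinery of Section \ref{fin_ind} and runs parallel to its treatment of part (ii).
\end{itemize}

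Two small remarks on your write-up.
\begin{itemize}
\item In the commuting case, the equality $e_i\Psi(a)e_j=e_ie_j\Psi(a)$ requires $e_i$ to be central in $I(\A^G)$, not merely in $\A^G$. This holds because $Z(\A^G)\subseteq Z(I(\A^G))$, either by Hamana or by the rigidity argument in Lemma \ref{commutator}, and you should say so.
\item The step you flag as the main obstacle is routine. Any $c\in(\U z)'\cap z\A z$ is annihilated on both sides by every other summand $\U z'$, so it commutes with all of $\U$; hence $(\U z)'\cap z\A z=\A^G z$ directly, with no double commutant argument needed. Likewise, compressing by $a\mapsto\sum_{z'}z'az'$ and then applying the slice maps $\phi\otimes\id$ (with $\phi$ a state) is plainly ucp and fixes $\A^G$.
\end{itemize}
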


\begin{proof}
(i) By \cite[Theorem 3.2]{Izumi2002}, $\A^G \subseteq \A$ has finite Watatani index, since $\A$ is simple and $M_G(a^*a) \geq \frac{1}{|G|}a^*a$ for all $a \in \A$. Set $\W = \span\{w_g: g \in G\} \subseteq \A$. Since $w:G \to U(\A)$ is a projective representation, $\W \subseteq \A$ is a unital finite-dimensional $C^*$-subalgebra. By Lemma \ref{double_rel_commutant},
\[
    (\A^G)' \cap \A = (\W' \cap \A)' \cap \A = \W.
\]
By Theorem \ref{unique_CE_fin_ind}, $\A^G \subseteq \A$ has a unique conditional expectation if and only if $\W = Z(\A^G)$, if and only if $w_gw_h=w_hw_g$ for all $g, h \in G$. Since $I(\A)$ is simple and $I(\alpha)_g = \Ad(w_g)$ by uniqueness, the same argument shows that $I(\A)^G \subseteq I(\A)$ has a unique conditional expectation if and only if $w_gw_h=w_hw_g$ for all $g, h \in G$. Now if $\A^G \subseteq \A$ has a unique pseudo-expectation, then $\A^G \subseteq \A$ has a unique conditional expectation. And if $I(\A)^G \subseteq I(\A)$ has a unique conditional expectation, then $\A^G \subseteq \A$ has a unique pseudo-expectation, by Lemma \ref{PsExp_extends_to_CE}. Thus $\A^G \subseteq \A$ has a unique pseudo-expectation if and only if $w_gw_h=w_hw_g$ for all $g, h \in G$.\\

(ii) If $\A \rtimes_{\alpha,r} G \subseteq \A \otimes B(\ell^2(G))$ has a unique pseudo-expectation, then it has a unique conditional expectation, which implies $(\A \rtimes_{\alpha,r} G)' \cap (\A \otimes B(\ell^2(G))) = Z(\A \rtimes_{\alpha,r} G)$ (Lemma \ref{unique_CE_crossed}). By Lemma \ref{rel_comm_inner}, $G$ is abelian and there exists a representation $u:G \to U(\A)$ such that $\alpha_g=\Ad(u_g)$ for all $g \in G$. ($\Longleftarrow$) Conversely, suppose $G$ is abelian and there exists a representation $u:G \to U(\A)$ such that $\alpha_g=\Ad(u_g)$ for all $g \in G$. Then $I(\alpha)_g = \Ad(u_g)$ for all $g \in G$, by uniqueness. Since $I(\A)$ is simple, Lemma \ref{rel_comm_inner} implies $(I(\A) \rtimes_{I(\alpha),r} G)' \cap (I(\A) \otimes B(\ell^2(G))) = Z(I(\A) \rtimes_{I(\alpha),r} G)$. Thus $I(\A) \rtimes_{I(\alpha),r} G \subseteq I(\A) \otimes B(\ell^2(G))$ has a unique conditional expectation (Lemma \ref{unique_CE_crossed}), which implies $\A \rtimes_{\alpha,r} G \subseteq \A \otimes B(\ell^2(G))$ has a unique pseudo-expectation (Lemma \ref{PsExp_extends_to_CE}).
\end{proof}

\subsubsection{The case of $\A$ abelian}

Now we move to the opposite end of the spectrum and assume that $\A$ is abelian, i.e., $\A=C(X)$ for a compact Hausdorff space $X$. Then $\A^G \subseteq \A$ is isomorphic to $C(X/G) \hookrightarrow C(X): f \mapsto f \circ j$, where $j:X \twoheadrightarrow X/G: x \mapsto \orb_G(x)$ is the orbit map. Since $X/G$ is Hausdorff when $|G| < \infty$ \cite[Proposition 3.1]{tomDieckBook}, we can use Theorem \cite[Corollary 3.22]{PittsZarikian2015} to analyze this inclusion. We obtain the surprisingly negative result that $\A^G \subseteq \A$ has a unique pseudo-expectation if and only if $\alpha:G \curvearrowright \A$ is trivial. Recalling that $\A^G \subseteq \A$ is a corner of $\A \rtimes_{\alpha,r} G \subseteq \A \otimes B(\ell^2(G))$ (Lemma \ref{corner_iso}), we imagined that a negative answer to Question \ref{open_corner} was imminent. But it turns out that $\A \rtimes_{\alpha,r} G \subseteq \A \otimes B(\ell^2(G))$ also has multiple pseudo-expectations whenever $\alpha:G \curvearrowright \A$ is non-trivial.\\

Recall that a continuous surjection $j:Y \twoheadrightarrow X$ of compact Hausdorff spaces is \emph{irreducible} if $j(K) \subsetneq X$ whenever $K \subsetneq Y$ is closed. By \cite[Lemma 9A]{DaiXie2024}, this is equivalent to the condition that every nonempty open set $\emptyset \neq U \subseteq Y$ contains an entire fiber $j^{-1}(\{x_0\})$ for some $x_0 \in X$.

\begin{lemma} \label{irred_orb_map_general}
Let $\sigma:G \curvearrowright X$ be a continuous action of a finite group on a compact Hausdorff space. Then the orbit map $j:X \twoheadrightarrow X/G:x \mapsto \orb_G(x)$ is irreducible if and only if $\sigma$ is trivial.
\end{lemma}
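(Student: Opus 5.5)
We prove Lemma \ref{irred_orb_map_general} using the fiber characterization of irreducibility recalled just before the statement (\cite[Lemma 9A]{DaiXie2024}): $j$ is irreducible if and only if every nonempty open $U \subseteq X$ contains an entire fiber $j^{-1}(\{\orb_G(x_0)\}) = \orb_G(x_0)$.

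If $\sigma$ is trivial, then $X/G = X$ and $j$ is the identity homeomorphism. Every fiber is a singleton, and every nonempty open set contains a point, hence a whole fiber. So $j$ is irreducible.

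Conversely, suppose $\sigma$ is nontrivial; we find a nonempty open set containing no full orbit. Pick $g \in G$ and $x_1 \in X$ with $\sigma_g(x_1) \neq x_1$. The set $W = \{x \in X : \sigma_g(x) \neq x\}$ is open, because $\fix(\sigma_g)$ is closed in a Hausdorff space, and $W$ is nonempty. Since $X$ is Hausdorff and $\sigma_g$ is continuous, choose an open neighborhood $V$ of $x_1$ inside $W$ small enough that $V \cap \sigma_g(V) = \emptyset$. To do this, separate $x_1$ and $\sigma_g(x_1)$ by disjoint open sets $O_1, O_2$, and take $V = O_1 \cap \sigma_g^{-1}(O_2)$.

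Now suppose some orbit $\orb_G(x_0) \subseteq V$. Then both $x_0$ and $\sigma_g(x_0)$ lie in $V$. But $\sigma_g(x_0) \in \sigma_g(V)$, which is disjoint from $V$, a contradiction. Hence $V$ contains no full fiber, and $j$ is not irreducible.

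The only step needing care is choosing $V$ with $V \cap \sigma_g(V) = \emptyset$, and the Hausdorff separation above handles it. Finiteness of $G$ is used only implicitly, to know that $X/G$ is compact Hausdorff, so that the irreducibility notion and the fiber characterization apply.

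Alternatively, one could avoid the cited characterization and argue directly from the definition. With $V$ as above, take the closed set $K = X \setminus V$. Then $j(K) = X/G$, since each orbit meeting $V$ at $x$ also contains $\sigma_g(x) \notin V$, i.e. a point of $K$. Since $K \subsetneq X$, this shows $j$ is not irreducible. The trivial direction is immediate from the definition as well. I would present this direct version, as it avoids any dependence on \cite{DaiXie2024}.
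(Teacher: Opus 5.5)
Your proof is correct, and it takes a more elementary route than the paper. The paper first treats extremally disconnected $X$. There it uses Frol\'{i}k's theorem to split $X$ into clopen pieces $F, C_1, C_2, C_3$ with $F=\fix(\sigma_g)$ and $\sigma_g(C_i)\cap C_i=\emptyset$. It then handles general $X$ by passing to the Gleason cover $k:P\twoheadrightarrow X$ (so $C(P)\cong I(C(X))$), lifting the action to $P$, and checking that irreducibility of $j$ (via $j\circ k$) forces irreducibility of the orbit map of $P$.

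You observe that all that is really needed is one nonempty open set $V$ with $V\cap\sigma_g(V)=\emptyset$. Such a set exists in any Hausdorff space: separate $x_1$ from $\sigma_g(x_1)$ by disjoint open sets and pull back along the continuous map $\sigma_g$. Your closed set $K=X\setminus V$ then witnesses the failure of irreducibility directly from the definition. This avoids Frol\'{i}k, Gleason, and the fiber characterization altogether. It also uses nothing about $G$ beyond one element acting nontrivially; finiteness only serves to make $X/G$ compact Hausdorff.

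The paper's detour through the projective cover fits its injective-envelope viewpoint. It also gives a stronger, global clopen decomposition in the extremally disconnected case, but that extra strength is not used for this lemma. You also prove the easy direction (a trivial action makes $j$ a homeomorphism), which the paper leaves implicit.
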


\begin{proof}
First assume that $X$ is extremally disconnected. Suppose $\sigma_g:X \to X$ is non-trivial for some $e \neq g \in G$. By Frol\'{i}k's Theorem \cite[Theorem 3.1]{Frolik1971}, there exist disjoint clopen sets $F, C_1, C_2, C_3 \subseteq X$ such that $X=F \cup C_1 \cup C_2 \cup C_3$, $F=\fix(\sigma_g)$, and $\sigma_g(C_j) \cap C_j = \emptyset$ for all $j=1,2,3$. Since $\sigma_g$ is nontrivial, we may assume that $C_1 \neq \emptyset$. For any $x_0 \in C_1$, we have that $\sigma_g(x_0) \notin C_1$, which implies $\orb_G(x_0) \nsubseteq C_1$. Thus $j$ is not irreducible.\\

Now let $X$ be arbitrary. By Gleason's Theorem \cite[Theorem 3.2]{Gleason1958}, there exists an extremally-disconnected compact Hausdorff space $P$ and an irreducible continuous surjection $k:P \twoheadrightarrow X$. Since $C(P) \cong I(C(X))$, we have that $\sigma:G \curvearrowright X$ extends uniquely to a continuous action $\hat{\sigma}:G \curvearrowright P$ such that $k \circ \hat{\sigma}_g = \sigma_g \circ k$, $g \in G$. Now suppose that orbit map $j:X \twoheadrightarrow X/G$ is irreducible. A simple calculation shows that $j \circ k:P \twoheadrightarrow X/G$ is also irreducible. Now let $\emptyset \neq U \subseteq P$ be open. By irreducibility, there exists $x_0 \in X$ such that $k^{-1}(\orb_G(x_0)) \subseteq U$. But then $\orb_G(p_0) \subseteq U$ for any $p_0 \in k^{-1}(\{x_0\})$. In other words, the orbit map $P \twoheadrightarrow P/G: p \mapsto \orb_G(p)$ is irreducible. By the previous paragraph, $\hat{\sigma}$ is trivial. A fortiori, $\sigma$ is trivial.
\end{proof}

\begin{theorem} \label{abelian_fixed}
Let $\alpha:G \curvearrowright \A$ be a non-trivial action of a finite group on a unital abelian $C^*$-algebra. Then $\A^G \subseteq \A$ multiple pseudo-expectations. Likewise $\A \rtimes_{\alpha,r} G \subseteq \A \otimes B(\ell^2(G))$ has multiple pseudo-expectations.
\end{theorem}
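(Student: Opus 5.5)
The plan is to handle the two inclusions separately. For $\A^G \subseteq \A$, write $\A = C(X)$ with $\alpha$ induced by a non-trivial action $\sigma:G \curvearrowright X$. Then $\A^G \subseteq \A$ is the inclusion $C(X/G) \hookrightarrow C(X): f \mapsto f \circ j$, where $X/G$ is compact Hausdorff. By \cite[Corollary 3.22]{PittsZarikian2015}, an abelian inclusion $C(Y) \hookrightarrow C(X)$ induced by a continuous surjection $j:X \twoheadrightarrow Y$ has a unique pseudo-expectation if and only if $j$ is irreducible. Lemma \ref{irred_orb_map_general} says the orbit map is irreducible only when $\sigma$ is trivial, so $\A^G \subseteq \A$ has multiple pseudo-expectations. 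It is worth recording concretely what the second pseudo-expectation looks like. Besides $M_G$, pass to the Gleason cover $P$ with the lifted action. Use a Frol\'{i}k clopen set $C$ with $\sigma_g(C) \cap C = \emptyset$ to build a ucp map that evaluates at a chosen point of each orbit rather than averaging. This picture is mostly for intuition; the citation does the work.

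For $\A \rtimes_{\alpha,r} G \subseteq \A \otimes B(\ell^2(G))$, I would use Lemma \ref{PsExp_extends_to_CE} in the converse direction. Lemma \ref{inj_env_fin_grp_act} gives $I(\A \rtimes_{\alpha,r} G) = I(\A) \rtimes_{I(\alpha),r} G$. So any conditional expectation $I(\A) \otimes B(\ell^2(G)) \to I(\A) \rtimes_{I(\alpha),r} G$, restricted to $\A \otimes B(\ell^2(G))$, is a pseudo-expectation. Distinct conditional expectations restrict to distinct pseudo-expectations: they agree on $I(\A) \rtimes G$ and are determined by their values on $\A \otimes E_{g,h}$ together with bimodularity over $I(\A) \rtimes G \supseteq I(\A) \otimes I$ (more precisely, $\A \otimes B(\ell^2(G))$ generates $I(\A)\otimes B(\ell^2(G))$ as an $I(\A)\rtimes G$-bimodule after using the diagonal copy of $I(\A)$). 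It therefore suffices to produce two conditional expectations onto $I(\A) \rtimes_{I(\alpha),r} G$.

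The inclusion $I(\A) \rtimes G \subseteq I(\A) \otimes B(\ell^2(G))$ has finite index by Lemma \ref{fin_ind_CE} applied to $I(\A)$. By Theorem \ref{unique_CE_fin_ind} and Lemma \ref{unique_CE_crossed}, it is then enough to show that the relative commutant is strictly larger than the center. By Lemma \ref{rel_comm_cross_prod}, that relative commutant is $\{\sum_g d_g \otimes \rho_g\}$ with each $d_g \in I(\A) = C(P)$ a dependent element for $I(\alpha)_g$. Since the algebra is abelian, $d_g$ is dependent exactly when it is supported in $\fix(\hat\sigma_g)$, which is clopen by Frol\'{i}k. Choose $g \neq e$ acting non-trivially on $P$, and pick a nonzero $d \in C(P)$ supported in $\fix(\hat\sigma_g)$. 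I expect the main obstacle to be showing that $d \otimes \rho_g$, or some modification of it, is not central in $I(\A) \rtimes G$. Two things need to happen here.

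First, I would check membership. $d \otimes \rho_g$ lies in $I(\A)\rtimes G$ only if it has the form $\sum_k \iota(b_k)(1\otimes\lambda_k)$. Comparing matrix entries, this forces $\rho_g$ to be a combination of $\lambda$'s with diagonal coefficients $\alpha_h^{-1}(b)$. That would require $d$ to be $G$-invariant in a twisted sense and $g$ to be central. If instead $d \otimes \rho_g$ happens to lie in $I(\A) \rtimes G$, I would fall back on testing commutation with $\iota(a)$. Second, and more robustly, I would take $d = \chi_{\fix(\hat\sigma_g)}$ if that set is nonempty. Otherwise I would take $d_e = \chi_{C}$ with $C$ a Frol\'{i}k set moved by $g$, giving $d_e \otimes I = \chi_C \otimes I$. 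This element commutes with $\iota(I(\A))$ and with $1\otimes\lambda_k$, so it lies in the relative commutant. It is not in $\iota(I(\A))$, because $\iota(\chi_C)$ has diagonal entries $\alpha_h^{-1}(\chi_C)$, which are not all equal to $\chi_C$ since $\chi_{\sigma_g(C)} \ne \chi_C$. Hence it is not in $Z(I(\A)\rtimes G)$, whose $e$-coefficients lie in $\iota(I(\A)^G)$. This noncentral element yields non-uniqueness of the conditional expectation, and hence multiple pseudo-expectations.
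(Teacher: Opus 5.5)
Your treatment of $\A^G \subseteq \A$ is the paper's argument (\cite[Corollary 3.22]{PittsZarikian2015} plus Lemma \ref{irred_orb_map_general}). The second half, however, has a genuine flaw in the choice of witness. You make $\chi_{\fix(\hat\sigma_g)} \otimes \rho_g$ the primary candidate and use $\chi_C \otimes I$ only when $\fix(\hat\sigma_g)=\emptyset$. The primary candidate can be central. If $g$ is central in $G$, then $\fix(\hat\sigma_g)$ is $G$-invariant and $\rho_g=\lambda_{g^{-1}}$. Hence $\chi_{\fix(\hat\sigma_g)}\otimes\rho_g=\iota(\chi_{\fix(\hat\sigma_g)})(1\otimes\lambda_{g^{-1}})$ lies in $I(\A)\rtimes_{I(\alpha),r}G$, and an element of the relative commutant that lies in $I(\A)\rtimes_{I(\alpha),r}G$ is central. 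For instance, with $\mathbb{Z}_2$ swapping two points of $\{1,2,3\}$, your recipe produces the central element $\chi_{\{3\}}\lambda_g$ and proves nothing. Your fallback of ``testing commutation with $\iota(a)$'' is vacuous for the same reason.

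The repair is to drop the case split. If $\hat\sigma_g\neq\id$, then some Frol\'{i}k piece $C$ is nonempty, and your argument that $\chi_C\otimes I$ lies in the relative commutant but not in $I(\A)\rtimes_{I(\alpha),r}G$ works unconditionally. In fact $b\otimes I$ works for any $b\in I(\A)\setminus I(\A)^G$, so Frol\'{i}k is not needed. Note also that your justification for injectivity of restriction asserts $I(\A)\rtimes G\supseteq I(\A)\otimes I$. That is false: if it held, your witness $\chi_C\otimes I$ would be central. Only your parenthetical version, based on $\iota(b)(1\otimes E_{g,h})=I(\alpha)_g^{-1}(b)\otimes E_{g,h}$, is correct.

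With that repair your route is valid, but the passage to $I(\A)$, the Gleason cover and the restriction argument are all unnecessary. The paper stays at the level of $\A$:
\begin{enumerate}
\item[i.] Lemma \ref{fin_ind_CE} already makes $\A\rtimes_{\alpha,r}G\subseteq\A\otimes B(\ell^2(G))$ a finite-index inclusion.
\item[ii.] Since $\A$ is abelian, every $d\in\A$ is a dependent element for $\alpha_e=\id$. So $\A\otimes I$ lies in the relative commutant (Lemma \ref{rel_comm_cross_prod}), but not in $\A\rtimes_{\alpha,r}G$ because $\alpha$ is nontrivial.
\item[iii.] Lemma \ref{unique_CE_crossed} then gives several conditional expectations. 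Each is automatically a pseudo-expectation, since $\A\rtimes_{\alpha,r}G\subseteq I(\A\rtimes_{\alpha,r}G)$.
\end{enumerate}
Lifting to $I(\A)$ is the tool for proving uniqueness, as in Theorem \ref{simple_fixed}. For non-uniqueness, the contrapositive of ``unique pseudo-expectation implies unique conditional expectation'' already does the job.
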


\begin{proof}
Suppose $C(X/G) \subseteq C(X)$ has a unique pseudo-expectation. Then by \cite[Corollary 3.22]{PittsZarikian2015}, the orbit map $j:X \twoheadrightarrow X/G$ is irreducible. Thus by Lemma \ref{irred_orb_map_general}, $G \curvearrowright X$ is trivial.\\

Now suppose $\alpha:G \curvearrowright \A$ is non-trivial. Recall that
\[
    (\A \rtimes_\alpha G)' \cap (\A \otimes B(\ell^2(G))
    = \left\{\sum_g (d_g \otimes \rho_g): \begin{tabular}{c} $(\forall g \in G$) $d_g \in \A$ is a dependent\\ element for $\alpha_g \in \Aut(\A)$ \end{tabular}\right\}
\]
(Lemma \ref{rel_comm_cross_prod}). Since $\A$ is abelian, every $d \in \A$ is a dependent element for $\alpha_e=\id \in \Aut(\A)$. It follows that
\[
    \A \otimes I \subseteq (\A \rtimes_{\alpha,r} G)' \cap (\A \otimes B(\ell^2(G))).
\]
But
\[
    \A \otimes I \nsubseteq \A \rtimes_{\alpha,r} G,
\]
since $\alpha$ is non-trivial. Therefore
\[
    (\A \rtimes_{\alpha,r} G)' \cap (\A \otimes B(\ell^2(G))) \nsubseteq \A \rtimes_{\alpha,r} G,
\]
which implies $\A \rtimes_{\alpha,r} G \subseteq \A \otimes B(\ell^2(G))$ has multiple conditional expectations (Lemma \ref{unique_CE_crossed}).
\end{proof}

\subsubsection{The case of trivial actions}

For the sake of completeness, we record what happens when $\A$ is arbitrary but $\alpha:G \curvearrowright \A$ is trivial. In that case, $\A^G=\A$ (so $\A^G \subseteq \A$ has a unique pseudo-expectation) and $\A \rtimes_{\alpha,r} G = \A \otimes C_r^*(G)$.

\begin{theorem}
Let $\A$ be a unital $C^*$-algebra and $G$ be a finite group. Then the following are equivalent:
\begin{enumerate}
\item[i.] $\A \otimes C_r^*(G) \subseteq \A \otimes B(\ell^2(G))$ has a unique pseudo-expectation;
\item[ii.] $C_r^*(G) \subseteq B(\ell^2(G))$ has a unique pseudo-expectation;
\item[iii.] $G$ is abelian.
\end{enumerate}
\end{theorem}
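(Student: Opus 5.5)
Since $\alpha$ is trivial, the embedding $\iota$ identifies $\A \rtimes_{\alpha,r} G$ with $\A \otimes C_r^*(G) \subseteq \A \otimes B(\ell^2(G))$, and I will run everything through the finite-index machinery of Lemmas \ref{unique_CE_crossed}, \ref{rel_comm_cross_prod} and \ref{PsExp_extends_to_CE}. The case $\A=\bbC$ is exactly the inclusion in (ii), so it is enough to show (i) $\iff$ (iii) for arbitrary unital $\A$; applying this once with $\A$ and once with $\A=\bbC$ then gives (ii) $\iff$ (iii).

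For (i) $\implies$ (iii), a unique pseudo-expectation implies a unique conditional expectation. By Lemma \ref{unique_CE_crossed} this gives
\[
(\A \otimes C_r^*(G))' \cap (\A \otimes B(\ell^2(G))) = Z(\A \otimes C_r^*(G)).
\]
Every $d \in \A$ is a dependent element for the identity automorphism, so $1$ is one for each $\alpha_g=\id$. Lemma \ref{rel_comm_cross_prod} therefore places $1 \otimes \rho_g$ in the relative commutant for every $g$. Hence $1 \otimes \rho_g$ commutes with $1 \otimes \rho_h$, so $\rho_g\rho_h=\rho_h\rho_g$. Since $\rho$ is faithful, $G$ is abelian.

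For (iii) $\implies$ (i), I will use Lemma \ref{PsExp_extends_to_CE}. It reduces the claim to showing that $I(\A) \otimes C_r^*(G) \subseteq I(\A) \otimes B(\ell^2(G))$ has a unique conditional expectation. By Lemma \ref{inj_env_fin_grp_act}, $I(\alpha)$ is trivial, so this is the crossed product by the trivial action. By Lemma \ref{unique_CE_crossed}, it suffices to identify the relative commutant, which Lemma \ref{rel_comm_cross_prod} computes. For the trivial action, a dependent element $d_g$ for $\id$ is precisely an element of $Z(I(\A))$. So the relative commutant is
\[
\Big\{\sum_g d_g \otimes \rho_g : d_g \in Z(I(\A))\Big\} = Z(I(\A)) \otimes R(G).
\]
When $G$ is abelian, $\rho_g=\lambda_g^*$, so $R(G)=C_r^*(G)$ is abelian. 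Then $Z(I(\A)) \otimes C_r^*(G)$ equals $Z(I(\A) \otimes C_r^*(G))$; one inclusion is clear, and equality holds because the center of a tensor product with a finite-dimensional abelian algebra is the tensor product of the centers. This gives the required equality, and hence (i).

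The main obstacle is bookkeeping rather than depth. First, one must check that Lemma \ref{PsExp_extends_to_CE} applies verbatim when the action is trivial, with $I(\A\otimes C_r^*(G)) = I(\A)\otimes C_r^*(G)$. Second, in (i) $\implies$ (iii), one must be sure that the relative commutant already contains $1 \otimes \rho_g$ at the level of $\A$ rather than $I(\A)$. Both follow directly from the cited lemmas.
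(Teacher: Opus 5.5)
Your proof is correct. Your (iii) $\implies$ (i) is exactly the paper's argument: Lemma \ref{PsExp_extends_to_CE}, then Lemmas \ref{unique_CE_crossed} and \ref{rel_comm_cross_prod}, giving relative commutant $Z(I(\A))\otimes C_r^*(G)$.

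The difference is in the necessity direction. The paper proves (i) $\implies$ (ii) $\implies$ (iii):
\begin{itemize}
\item If $\theta:B(\ell^2(G))\to C_r^*(G)$ is a conditional expectation, then $\id\otimes\theta$ is one for the inclusion in (i). So uniqueness in (i) forces uniqueness of $\theta$, and since $I(C_r^*(G))=C_r^*(G)$, this is uniqueness of pseudo-expectations in (ii).
\item Theorem \ref{finite_general} then gives $R(G)=C_r^*(G)'\cap B(\ell^2(G))=Z(L(G))$, so $R(G)$, and hence $G$, is abelian.
\end{itemize}
You instead prove (i) $\implies$ (iii) directly for every unital $\A$ from the finite-index criterion of Lemma \ref{unique_CE_crossed}. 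You place $1\otimes\rho_g$ in the abelian algebra $Z(\A\otimes C_r^*(G))$, which is the forward half of Lemma \ref{rel_comm_inner} with $w_g=1$. You then read off (ii) as the case $\A=\bbC$.

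Your route is more uniform: a single criterion drives both directions, and you need neither Theorem \ref{finite_general} (hence Bunce--Chu) nor the tensoring step. The paper's route isolates the soft tensoring step (i) $\implies$ (ii) and shows the finite-dimensional theorem of Section \ref{general_results} in action.

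Two small slips, neither affecting validity:
\begin{itemize}
\item It is false that every $d\in\A$ is a dependent element for $\id$. Those are exactly the elements of $Z(\A)$, as you correctly say in the other direction, but you only use $d=1$.
\item Triviality of $I(\alpha)$ comes from uniqueness of the extension of $\id_{\A}$ to $I(\A)$ (rigidity), not from Lemma \ref{inj_env_fin_grp_act}.
\end{itemize}
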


\begin{proof}
Note that since $|G|<\infty$, $I(\A \otimes C_r^*(G)) = I(\A) \otimes C_r^*(G)$ by Lemma \ref{inj_env_fin_grp_act}. In particular, $I(C_r^*(G))=C_r^*(G)$. 

(i $\implies$ ii) Let $\theta:B(\ell^2(G)) \to C_r^*(G)$ be a conditional expectation. Then $\id \otimes \theta:\A \otimes B(\ell^2(G)) \to \A \otimes C_r^*(G)$ is a conditional expectation. Since $\id \otimes \theta$ is uniquely determined, so is $\theta$.

(ii $\implies$ iii) By Theorem \ref{finite_general}, $C_r^*(G)' \cap B(\ell^2(G)) = Z(C_r^*(G))$. Since $|G|<\infty$, $C_r^*(G)=L(G)$ and
\[
    C_r^*(G)' \cap B(\ell^2(G)) = L(G)' \cap B(\ell^2(G)) = R(G).
\]
But then $R(G)=Z(L(G))$, which implies $R(G)$ is abelian, which in turn implies $G$ is abelian.

(iii $\implies$ i) By Lemma \ref{PsExp_extends_to_CE}, it suffices to show that $I(\A) \otimes C_r^*(G) \subseteq I(\A) \otimes B(\ell^2(G))$ has a unique conditional expectation. But this follows from Lemma \ref{unique_CE_crossed}, since Lemma \ref{rel_comm_cross_prod} implies
\[
    (I(\A) \otimes C_r^*(G))' \cap (I(\A) \otimes B(\ell^2(G))) = Z(I(\A)) \otimes C_r^*(G).
\]
\end{proof}

\subsection{The Inclusion $Z(\A)^c \subseteq \A \rtimes_{\alpha,r} G$}

\phantom{}\\

In this final section, we consider the $C^*$-inclusion $Z(\A)^c \subseteq \A \rtimes_{\alpha,r} G$, where
\[
    Z(\A)^c = Z(\A)' \cap (\A \rtimes_{\alpha,r} G)
\]
is the relative commutant of the center of $\A$. Our real interest is in the $C^*$-inclusion $C^*(\A \cup \A^c) \subseteq \A \rtimes_{\alpha,r} G$, where $\A^c = \A' \cap (\A \rtimes_{\alpha,r} G)$ is the relative commutant of $\A$. But $C^*(\A \cup \A^c) \subseteq Z(\A)^c$, and we found the latter easier to work with. If $\A$ is abelian, then $Z(\A)^c = \A^c = C^*(\A \cup \A^c)$. And if $\A$ is simple, then $Z(\A)^c = \A \rtimes_{\alpha,r} G$. Given that $Z(\A)^c$ tends to be a ``large'' subalgebra of $\A \rtimes_{\alpha,r} G$, one would expect $Z(\A)^c \subseteq \A \rtimes_{\alpha,r} G$ to have a unique pseudo-expectation more often than not. Our results confirm this intuition.

\begin{lemma} \label{commutator}
Let $\A \subseteq \B$ be a $C^*$-inclusion with $\A$ \ul{abelian} and let $\Psi:\B \to I(\A^c)$ be a pseudo-expectation for $\A^c \subseteq \B$. Then $\Psi([b,a])=0$ for all $b \in \B$ and $a \in \A$.
\end{lemma}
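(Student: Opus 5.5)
Since $\A$ is abelian, $\A \subseteq \A' \cap \B = \A^c$. So the plan is to use the bimodule property of $\Psi$ over $\A^c$ to move $a$ outside $\Psi$, and then to show that $a$ is central in $I(\A^c)$.

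First, because $\Psi|_{\A^c}=\id$, the subalgebra $\A^c$ lies in the multiplicative domain of the ucp map $\Psi$. By Choi's Lemma (the same fact used in the proof of Lemma \ref{equiv_cp_decomp}),
\[
    \Psi(x b y) = x\Psi(b)y, \qquad x,y \in \A^c, ~ b \in \B.
\]
Applying this with $a \in \A \subseteq \A^c$ gives
\[
    \Psi([b,a]) = \Psi(ba)-\Psi(ab) = \Psi(b)a - a\Psi(b),
\]
where the products on the right are taken in $I(\A^c)$. It therefore suffices to show that every $a \in \A$ commutes with all of $I(\A^c)$.

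Next, every $a \in \A$ commutes with every element of $\A^c$, by the definition of the relative commutant. Together with $\A \subseteq \A^c$, this gives $\A \subseteq Z(\A^c)$. The remaining claim, which is the main step, is that $Z(\A^c) \subseteq Z(I(\A^c))$. I will prove it by rigidity of the injective envelope.

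Let $u \in U(Z(\A^c))$ and define the ucp map $\Ad(u): I(\A^c) \to I(\A^c)$ by $y \mapsto uyu^*$. For $x \in \A^c$ we have $uxu^*=x$, because $u$ is central in $\A^c$. So $\Ad(u)$ restricts to the identity on $\A^c$. Rigidity of $I(\A^c)$ then forces $\Ad(u)=\id$ on $I(\A^c)$, which says that $u$ commutes with $I(\A^c)$. Every element of the unital $C^*$-algebra $Z(\A^c)$ is a linear combination of unitaries in $Z(\A^c)$, so $Z(\A^c) \subseteq Z(I(\A^c))$. (Alternatively, one can cite Hamana's result that the center of a $C^*$-algebra embeds in the center of its injective envelope.)

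Combining the steps, each $a \in \A$ commutes with $\Psi(b) \in I(\A^c)$, and therefore $\Psi([b,a]) = \Psi(b)a - a\Psi(b) = 0$.
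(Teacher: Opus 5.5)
Your proof is correct and is essentially the paper's argument. You use the $\A^c$-bimodule property to get $\Psi([b,a])=[\Psi(b),a]$, and then apply rigidity of $I(\A^c)$ to the map $y \mapsto uyu^*$ to see that $a$ commutes with $I(\A^c)$. The only cosmetic difference is that the paper takes $u \in U(\A)$ directly, whereas you take $u \in U(Z(\A^c))$ and so obtain the slightly more general containment $Z(\A^c) \subseteq Z(I(\A^c))$.
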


\begin{proof}
Since $\A$ is abelian, $\A \subseteq \A^c$. Thus
\[
    \Psi([b,a]) = \Psi(ba-ab) = \Psi(b)a-a\Psi(b) = [\Psi(b),a].
\]
Now let $u \in U(\A)$. Define a ucp map $\iota:I(\A^c) \to I(\A^c): y \mapsto uyu^*$. Then $\iota(x)=x$ for all $x \in \A^c$, which implies $\iota(y)=y$ for all $y \in I(\A^c)$, by rigidity of the injective envelope. It follows that $uy=yu$ for all $y \in I(\A^c)$, which implies $ay=ya$ for all $a \in \A$ and $y \in I(\A^c)$. In particular, $[\Psi(b),a]=0$.
\end{proof}

For any $C^*$-algebra $\A$, the injective envelope $I(\A)$ is an $AW^*$-algebra. Its center $Z(I(\A))$ is an abelian $AW^*$-subalgebra, therefore injective \cite[Chapter 1, \S4 Proposition 8 and \S7 Theorem 1]{BerberianBook}. Also
\[
    Z(\A) = \A' \cap \A \subseteq \A' \cap I(\A) = Z(I(\A)),
\] 
by \cite[Corollary 4.3]{Hamana1979}. Thus $I(Z(\A)) \subseteq Z(I(\A))$. Unfortunately this inclusion need not be an equality. In fact, $I(Z(\A))=Z(I(\A))$ if and only if $Z(\A)$ \emph{detects regular ideals} in $\A$ (\cite[Corollary 1.6]{Hamana1982} and \cite[Theorems 6.3 and 6.6]{Hamana1981}). This means that $\J \cap Z(\A) \neq \{0\}$ whenever $\{0\} \neq \J \lhd \A$ and $\J^{\perp\perp}=\J$. Of course $I(Z(\A))=Z(I(\A))$ when $\A$ is abelian or simple. 

\begin{theorem} \label{center_commutant}
Let $(\A,G,\alpha)$ be a $C^*$-dynamical system. If $Z(\A)$ detects regular ideals in $\A$, then the $C^*$-inclusion $Z(\A)^c \subseteq \A \rtimes_{\alpha,r} G$ has the faithful unique pseudo-expectation property.
\end{theorem}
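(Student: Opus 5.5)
Set $\C = Z(\A)$ and $\B = \A \rtimes_{\alpha,r} G$, so $\C^c = Z(\A)^c$. The plan is to show that every pseudo-expectation $\Psi:\B \to I(\C^c)$ kills all the off-diagonal Fourier terms that do not already lie in $\C^c$. Once that is done, $\Psi$ will coincide with an explicit canonical map built from $\bbE_{\A}$, which gives uniqueness, and faithfulness will follow from faithfulness of $\bbE_{\A}$.

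\textbf{Step 1 (the candidate).} First I would check that $\bbE_{\A}$ maps $\C^c$ into $\A$. This holds because $\bbE_{\A}$ is an $\A$-bimodule map, so it commutes with multiplication by $Z(\A)$. Hence $\bbE_{\A}$ restricts to a faithful conditional expectation onto $\A \subseteq \C^c$. I would also compute $\C^c$ in Fourier terms: $x \sim \sum_g x_g\lambda_g$ lies in $\C^c$ iff $z x_g = x_g\alpha_g(z)$ for all $z \in Z(\A)$ and all $g$. Since $x_g$ is arbitrary in $\A$ and $Z(\A)$ is central, this says $x_g(z-\alpha_g(z)) = 0$ for all $z$. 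So $x_g$ must be supported where $\alpha_g$ acts trivially on the center.

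\textbf{Step 2 (killing off-diagonal terms).} Let $\Psi$ be a pseudo-expectation. By Lemma~\ref{commutator}, applied to the abelian algebra $\C \subseteq \B$, we have $\Psi([b,z])=0$ for all $b \in \B$, $z \in Z(\A)$. Also $\Psi$ is a $\C^c$-bimodule map by Choi's lemma, with values in $I(\C^c)$. Take $a\lambda_g$ with $a \in \A$. Then
\[
    \Psi(a\lambda_g z) = \Psi(a\alpha_g(z)\lambda_g) = \alpha_g(z)\Psi(a\lambda_g),
\]
while commutation gives $\Psi(a\lambda_g z) = z\Psi(a\lambda_g)$. Thus $(z-\alpha_g(z))\Psi(a\lambda_g) = 0$ for all $z \in Z(\A)$. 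Now use the detection hypothesis: $I(Z(\A)) = Z(I(\A))$. Let $J_g$ be the ideal of $Z(\A)$ generated by $\{z-\alpha_g(z)\}$, and let $q_g \in \Proj(I(Z(\A)))$ be its support projection, i.e.\ the supremum in the injective (hence AW$^*$) algebra $I(Z(\A))$. The relations give $q_g\Psi(a\lambda_g) = 0$. Here I need $I(Z(\A)) \subseteq I(\C^c)$ centrally, or at least commuting with the range; I would argue this via rigidity as in Lemma~\ref{commutator}, extending the unitary-conjugation argument to $I(Z(\A))$. On the complementary projection $q_g^\perp$, $\alpha_g$ acts trivially on $Z(\A)$ ``locally''. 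I would then show that $q_g^\perp a\lambda_g$ can be approximated by elements of $\C^c$, using $z \in Z(\A)$ with $z$ supported in the interior of the set where $\alpha_g$ fixes the center. Then $\Psi(a\lambda_g) = q_g^\perp\Psi(a\lambda_g)$ is determined by $\C^c$. Since $\Psi$ is the identity on $\C^c$, this forces $\Psi(a\lambda_g)$ to equal the canonical value.

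\textbf{Step 3 (main obstacle).} The hardest part is the passage from $Z(\A)$-level information to $I(\C^c)$. The detection hypothesis is exactly what makes $I(Z(\A)) = Z(I(\A))$, so that the projections $q_g$ are available and act faithfully. Without detection, the annihilation $q_g\Psi(a\lambda_g)=0$ would not separate enough. I would first try to realize everything inside $I(\A)\overline{\rtimes}G$, in the spirit of Section~\ref{inj}, and compare with the known result for $\A \subseteq \B$ (pointwise proper outerness on the center). Uniqueness then follows from linearity and continuity on $C_c(G,\A)$. Faithfulness follows because the unique $\Psi$ agrees with a map dominated below by the faithful $\bbE_{\A}$ composed with the inclusion, via $\Psi(x^*x) \geq$ its $e$-component $=\bbE_{\A}(x^*x)$-type estimates.
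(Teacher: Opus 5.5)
\textbf{There is a genuine gap.} Your key relation is correct, and it is cleaner than the paper's route. Since $a\lambda_g z=\alpha_g(z)a\lambda_g$, Lemma~\ref{commutator} (applied to the abelian algebra $Z(\A)$) together with $Z(\A)^c$-bimodularity gives $(z-\alpha_g(z))\Psi(a\lambda_g)=0$ directly, with no need for Frol\'{i}k's theorem or Kumjian's free normalizers. Likewise, $fa\lambda_g\in Z(\A)^c$ whenever $f\in Z(\A)$ annihilates $J_g$, which is the counterpart of the paper's $e_i\lambda_g\in Z(\A)^c$.

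The gap is in turning these $Z(\A)$-level relations into a determination of $\Psi(a\lambda_g)\in I(Z(\A)^c)$. You multiply $\Psi(a\lambda_g)$ by support projections $q_g,q_g^\perp\in I(Z(\A))$. Nothing places $I(Z(\A))$ inside an algebra containing $I(Z(\A)^c)$ so that suprema computed in $I(Z(\A))$ remain suprema there. Rigidity only shows that $Z(\A)$ itself is central in $I(Z(\A)^c)$. From $u\Psi(a\lambda_g)=0$ for an approximate unit $(u)$ of $J_g$, you learn only that $\Psi(a\lambda_g)$ is killed by the supremum of $(u)$ computed in $I(Z(\A)^c)$, and nothing identifies that supremum with $q_g$. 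Similarly, $q_g^\perp a\lambda_g$ cannot be approximated in norm by elements of $Z(\A)^c$, since $q_g^\perp$ is typically not in $\A$ (think of the characteristic function of a regular open set in the Gleason cover).

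The paper supplies exactly this missing order-continuity. It works inside $I_G(\A)\overline{\rtimes}_{I_G(\alpha)}G$ and invokes Hamana's normality of $Z(I(\A))\subset I(\A)\subset I_G(\A)\subset I_G(\A)\overline{\rtimes}_{I_G(\alpha)}G$. It then uses detection, via $Z(I(\A))=I(Z(\A))=\overline{Z(\A)}$, to realize the relevant projections as suprema of nets from $Z(\A)$. In your write-up, detection is quoted but never used, since $q_g$ exists in $I(Z(\A))$ regardless. As written, your argument would therefore prove the theorem without that hypothesis, which the paper explicitly leaves open; that is the symptom that the hard step is missing.

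Your route can in fact be completed without any order limits, and this is where detection genuinely enters. Let $J_g^\perp=\{f\in Z(\A): fJ_g=0\}$ and $K_g=J_g+J_g^\perp$, an essential ideal of $Z(\A)$. For two pseudo-expectations $\Psi_1,\Psi_2$, the element $d=\Psi_1(a\lambda_g)-\Psi_2(a\lambda_g)$ satisfies $K_gd=0$: elements of $J_g$ kill both $\Psi_i(a\lambda_g)$, while $f\Psi_i(a\lambda_g)=fa\lambda_g$ for $f\in J_g^\perp$. The annihilator $\{x\in\A: xK_g=0\}$ is a regular ideal of $\A$ meeting $Z(\A)$ trivially, so detection forces it to be zero. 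Faithfulness of $\bbE_{\A}$ then shows that $\overline{K_gZ(\A)^c}$ is an essential ideal of $Z(\A)^c$. An annihilator-plus-rigidity argument in the AW$^*$-algebra $I(Z(\A)^c)$ shows that an essential ideal has zero annihilator there, whence $d=0$.

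\textbf{Faithfulness.} This step is wrong as stated. The inequality $\Psi(x^*x)\geq\bbE_{\A}(x^*x)$ already fails for $\A=\bbC$, $G=\bbZ_2=\{e,g\}$, $x=1-\lambda_g$, where $\Psi=\id$ and $x^*x=2-2\lambda_g\not\geq 2$. What is true is the equality $\bbE_{I(\A)}\circ\Psi=\bbE_{\A}$, which the paper derives from the explicit formula $\Psi(\lambda_g)=p\lambda_g\in I(\A)\rtimes_{I(\alpha),r}G$. In your framework, extend $\bbE_{\A}|_{Z(\A)^c}$ to a ucp map $\tilde{E}:I(Z(\A)^c)\to I(\A)$. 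The same essential-ideal argument, now in $I(\A)$, gives $\tilde{E}(\Psi(a\lambda_g))=0$ for $g\neq e$, hence $\tilde{E}\circ\Psi=\bbE_{\A}$.

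Relatedly, the Step~1 ``candidate'' built from $\bbE_{\A}$ is not a pseudo-expectation for $Z(\A)^c\subseteq\A\rtimes_{\alpha,r}G$, since it does not fix $Z(\A)^c$. The actual one sends $a\lambda_g$ to $p_ga\lambda_g$, where $p_g\in\Proj(Z(I(\A)))$ is the fixed part of $I(\alpha)_g$ on the center.
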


\begin{proof}
Recall from Section \ref{inj} that we have the following $C^*$-inclusions:
\[
    \begin{matrix}
        & & & & \\
        & & & & I_G(\A) \overline{\rtimes}_{I_G(\alpha)} G\\
        & & & & \cup\\
        \A \rtimes_{\alpha,r} G & \subset & I(\A) \rtimes_{I(\alpha),r} G & \subset & I_G(\A) \rtimes_{I_G(\alpha),r} G\\
        \cup & & \cup & & \cup\\
        \A & \subset & I(\A) & \subset & I_G(\A)\\
        \cup & & \cup & & \\
        Z(\A) & \subset & Z(I(\A)) & & \\
        & & & & \\
    \end{matrix}
\]
By \cite[Lemma 4.13]{Hamana1981} and \cite[Lemma 3.1]{Hamana1985} the inclusions
\[
    Z(I(\A)) \subset I(\A) \subset I_G(\A) \subset I_G(\A) \overline{\rtimes}_{I_G(\alpha)} G
\]
are \emph{normal} (preserve suprema of bounded increasing nets of self-adjoint elements). Since $Z(\A)^c \subseteq \A \rtimes_{\alpha,r} G \subseteq I_G(\A) \overline{\rtimes}_{I_G(\A)} G$ and $I_G(\A) \overline{\rtimes}_{I_G(\alpha)} G$ is injective, we may assume that $I(Z(\A)^c) \subseteq I_G(\A) \overline{\rtimes}_{I_G(\A)} G$ as an operator subsystem.\\

Now let $\Psi:\A \rtimes_{\alpha,r} G \to I(Z(\A)^c)$ be a pseudo-expectation for $Z(\A)^c \subseteq \A \rtimes_{\alpha,r} G$. Fix $g \in G$. Then $\alpha_g \in \Aut(\A)$ extends uniquely to $I(\alpha)_g \in \Aut(I(\A))$, which restricts to $\tilde{\alpha}_g \in \Aut(Z(I(\A)))$. By Frolik's Theorem \cite[Theorem 3.1]{Frolik1971}, there exist $p, q_1, q_2, q_3 \in \Proj(Z(I(\A)))$ such that $p+q_1+q_2+q_3=\one$, $\tilde{\alpha}_g(zp)=zp$ for all $z \in Z(I(\A))$, and $q_n\tilde{\alpha}_g(q_n)=0$ for $n=1,2,3$. Since $Z(\A)$ detects regular ideals in $\A$, we have that $Z(I(\A))=I(Z(\A))$, and by \cite[Theorem 6.6]{Hamana1981}, $I(Z(\A))=\overline{Z(\A)}$, where $\overline{Z(\A)}$ denotes the \emph{regular monotone completion}. Thus there exist increasing nets $\{e_i\}, \{r_j\}, \{s_k\}, \{t_\ell\} \subseteq Z(\A)_+$ such that 
\[
    p=\sup_{Z(I(\A))} e_i, ~ q_1=\sup_{Z(I(\A))} r_j, ~ q_2=\sup_{Z(I(\A))} s_k, ~ q_3=\sup_{Z(I(\A))} t_\ell.
\]
By normality,
\[
    p=\sup_{I_G(\A) \overline{\rtimes}_{I_G(\A)} G} e_i, ~ q_1=\sup_{I_G(\A) \overline{\rtimes}_{I_G(\A)} G} r_j, ~ q_2=\sup_{I_G(\A) \overline{\rtimes}_{I_G(\A)} G} s_k, ~ q_3=\sup_{I_G(\A) \overline{\rtimes}_{I_G(\A)} G} t_\ell.
\]
Now for all $z \in Z(\A)$,
\[
    \alpha_g(ze_i) = \tilde{\alpha}_g(ze_ip) = ze_ip = ze_i,
\]
which implies
\[
    (e_i\lambda_g)z = e_i\alpha_g(z)\lambda_g = \alpha_g(e_i)\alpha_g(z)\lambda_g = \alpha_g(e_iz)\lambda_g = \alpha_g(ze_i)\lambda_g = (ze_i)\lambda_g = z(e_i\lambda_g),
\]
which in turn implies $e_i\lambda_g \in Z(\A)^c$. Similarly,
\[
    r_j\alpha_g(r_j) = r_jq_1\tilde{\alpha}_g(r_jq_1) = r_jq_1\alpha_g(r_j)\tilde{\alpha}_g(q_1) = r_j\alpha_g(r_j)q_1\tilde{\alpha}_g(q_1) = 0,
\]
which implies
\[
    (r_j\lambda_g)^2 = r_j\lambda_gr_j\lambda_g = r_j\alpha_g(r_j)\lambda_g^2=0,
\]
which in turn implies $r_j\lambda_g \in \A \rtimes_{\alpha,r} G$ is a \emph{free normalizer} of $Z(\A)$ \cite[Definition 1]{Kumjian1986}. By \cite[Proof of Proposition 4]{Kumjian1986}, $r_j\lambda_g \in \overline{[\A \rtimes_{\alpha,r} G,Z(\A)]}$. Likewise $s_k\lambda_g, t_\ell\lambda_g \in \overline{[\A \rtimes_{\alpha,r} G,Z(\A)]}$. By Lemma \ref{commutator},
\[
    (e_i+r_j+s_k+t_\ell)\Psi(\lambda_g) = \Psi((e_i+r_j+s_k+t_\ell)\lambda_g) = e_i\lambda_g.
\]
Taking order limits in $I_G(\A) \overline{\rtimes}_{I_G(\A)} G$, we conclude that $\Psi(\lambda_g)=p\lambda_g$. Since $\Psi$ is an $\A$-bimodule map, it follows that $\Psi$ is uniquely determined. Moreover, $\ran(\Psi) \subseteq I(\A) \rtimes_{I(\alpha),r} G$.\\

Turning to the faithfulness of $\Psi$, we claim that $\bbE_{I(\A)} \circ \Psi = \bbE_{\A}$, where $\bbE_{\A}:\A \rtimes_{\alpha,r} G \to \A$ and $\bbE_{I(\A)}:I(\A) \rtimes_{I(\alpha),r} G \to I(\A)$ are the canonical faithful conditional expectations. Since both maps are $\A$-bimodular, it suffices to show that $\bbE_{I(\A)}(\Psi(\lambda_g))=\bbE_{\A}(\lambda_g)$ for all $g \in G$. Fix $g \in G$. By the previous discussion, $\Psi(\lambda_g)=p\lambda_g$, where $p \in \Proj(Z(I(\A)))$. If $g \neq e$, then
\[
    \bbE_{I(\A)}(\Psi(\lambda_g)) = \bbE_{I(\A)}(p\lambda_g) = p\bbE_{I(\A)}(\lambda_g) = 0 = \bbE_{\A}(\lambda_g).
\]
And if $g=e$, then $\lambda_g=\one$, which implies
\[
    \bbE_{I(\A)}(\Psi(\lambda_g)) = \bbE_{I(\A)}(\one) = \one = \bbE_{\A}(\lambda_g). 
\]
Using the claim,
\[
    \Psi(x^*x)=0 \implies \bbE_{I(\A)}(\Psi(x^*x))=0 \implies \bbE_{\A}(x^*x)=0 \implies x=0.
\]
\end{proof}

The following immediate consequence of Theorem \ref{center_commutant} answers \cite[Question 7]{PittsZarikian2015}.

\begin{corollary} \label{Q7 ans}
Let $(\A,G,\alpha)$ be a $C^*$-dynamical system, with $\A$ \ul{abelian}. Then $\A^c \subseteq \A \rtimes_{\alpha,r} G$ has a faithful unique pseudo-expectation.
\end{corollary}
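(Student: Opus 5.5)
This follows directly from Theorem \ref{center_commutant}. The plan is to check its single hypothesis and then identify the subalgebra it produces.

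First, when $\A$ is abelian we have $Z(\A)=\A$. Hence $Z(\A)$ detects regular ideals in $\A$ trivially: any nonzero (regular) ideal $\J \lhd \A$ satisfies $\J \cap Z(\A) = \J \neq \{0\}$. Equivalently, as the paper notes just before Theorem \ref{center_commutant}, $I(Z(\A)) = I(\A) = Z(I(\A))$. The last equality holds because the injective envelope of an abelian $C^*$-algebra is abelian, so that $Z(I(\A))=I(\A)$.

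Second, since $Z(\A)=\A$, the relative commutant of the center is
\[
    Z(\A)^c = Z(\A)' \cap (\A \rtimes_{\alpha,r} G) = \A' \cap (\A \rtimes_{\alpha,r} G) = \A^c.
\]
So the $C^*$-inclusion $Z(\A)^c \subseteq \A \rtimes_{\alpha,r} G$ is literally $\A^c \subseteq \A \rtimes_{\alpha,r} G$.

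Applying Theorem \ref{center_commutant} then shows that this inclusion has a unique pseudo-expectation, and that it is faithful. There is no real obstacle here. The only point needing a remark is that $\A^c$ is a unital $C^*$-subalgebra containing $\A$, so that it makes sense as a $C^*$-inclusion in the paper's sense. This is clear, since $\A$ is abelian and $1 \in \A \subseteq \A^c$.
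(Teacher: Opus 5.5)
Your proposal is correct and matches the paper's argument, which records the corollary as an immediate consequence of Theorem \ref{center_commutant}. When $\mathcal{A}$ is abelian, $Z(\mathcal{A})=\mathcal{A}$, so the hypothesis that $Z(\mathcal{A})$ detects regular ideals holds trivially, and $Z(\mathcal{A})^c=\mathcal{A}^c$.
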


We end with two questions.

\begin{question}
Can the hypothesis that $Z(\A)$ detects regular ideals in $\A$ be dropped from Theorem \ref{center_commutant}?
\end{question}

\begin{question}
Can $Z(\A)^c$ in Theorem \ref{center_commutant} be replaced by $C^*(\A \cup \A^c)$?
\end{question}

\subsection*{Acknowledgements}

The author was partially supported by a AMS-Simons Research Enhancement Grant for PUI (Primarily Undergraduate Institution) Faculty.

\subsection*{Disclaimer}

The views expressed in this article are those of the author and do not reflect the official policy or position of the U. S. Naval Academy, the Department of the Navy, the Department of War, or the U. S. Government.

\end{document}